\title[{Cellularity of endomorphism algebras of tilting objects}]{Cellularity of endomorphism algebras\protect\\of tilting objects}
\author[G. Bellamy]{Gwyn Bellamy}
\address{School of Mathematics and Statistics, University Place, University of Glasgow, Glasgow, G12 8QQ, UK}
\email{gwyn.bellamy@glasgow.ac.uk}
\author[U. Thiel]{Ulrich Thiel}
\address{Department of Mathematics, University of Kaiserslautern, Postfach 3049, 67653 Kaiserslautern, Germany}
\email{thiel@mathematik.uni-kl.de}
\date{Feb 29, 2022}
\newcommand{\bc}{\mathbf{c}}
\newcommand{\bbD}{\mathbb{D}}
\newcommand{\bbN}{\mathbb{N}}
\DeclareMathOperator{\Sub}{Sub}
\newcommand{\bdm}{\begin{displaymath}}
\newcommand{\edm}{\end{displaymath}}
\newcommand{\bthm}{\begin{thm}}
\newcommand{\ethm}{\end{thm}}
\newcommand{\blem}{\begin{lem}}
\newcommand{\elem}{\end{lem}}
\newcommand{\bcor}{\begin{cor}}
\newcommand{\ecor}{\end{cor}}
\newcommand{\beq}{\begin{equation}\label}
\newcommand{\eeq}{\end{equation}}
\newcommand{\bprop}{\begin{prop}}
\newcommand{\eprop}{\end{prop}}
\newcommand{\bdefn}{\begin{defn}}
\newcommand{\edefn}{\end{defn}}
\newcommand{\h}{\mathfrak{h}}
\let\mc\mathcal
\DeclareMathOperator{\Ind}{Ind}
\renewcommand{\mod}{\textrm{mod}}
\newcommand{\op}{\mathrm{op}}
\newcommand{\ds}{\dots}
\begin{document}

\begin{abstract}
We show that, in a highest weight category with duality, the endomorphism algebra of a tilting object is naturally a cellular algebra. Our proof generalizes a recent construction of Andersen, Stroppel, and Tubbenhauer~\cite{AST}. This result raises the question of whether all cellular algebras can be realized in this way. The construction also works without the presence of a duality and yields standard bases, in the sense of Du and Rui, which have similar combinatorial features to cellular bases. As an application, we obtain standard bases—and thus a general theory of ``cell modules''—for  Hecke algebras associated to finite complex reflection groups (as introduced by Broué, Malle, and Rouquier) via category~$\mathcal{O}$ of the rational Cherednik algebra. For real reflection groups these bases are cellular.
\end{abstract}

\maketitle

\section{Introduction}

Cellular algebras were introduced by Graham and Lehrer \cite{Graham-Lehrer-Cellular} in 1996 and have since then become a major topic in representation theory. Simply put, these are algebras admitting a basis and an anti-involution satisfying some specific combinatorial properties. The prime example, on which these properties were modeled, are Hecke algebras of type $A$. The structure of a cellular algebra provides, via the associated cell modules, an effective way to tackle the representation theory of the algebra. This often leads to (or explains) interesting combinatorics.

The list of examples of algebras known to admit a cellular structure has grown considerably in recent years. For instance, many algebras admitting a diagrammatic interpretation—such as Temperley–Lieb and Brauer algebras—are naturally cellular with the basis given by a collection of diagrams and the anti-involution flipping the diagrams.

However, many algebras for which there is no obvious diagrammatic interpretation also admit a cellular structure. This raises the question of whether there is a general underlying property of all these algebras that implies their cellularity.

A possible answer lies in recent work of Andersen, Stroppel, and Tubbenhauer~\cite{AST} (a precursor of this paper is \cite{ALZ}). They observe that many examples of cellular algebras arise as endomorphism algebras of tilting modules for quantum groups. In this setting, they developed a construction of cellular bases for endomorphism algebras of arbitrary tilting modules, which recovers the known cases.
This approach has several appealing features: it is general, it is explicit (to some extent), and it involves choices which actually lead to a whole family of cellular bases, so that even in classical examples one obtains new cellular bases.

As already mentioned in their paper, it is clear that their construction works in greater generality than for tilting modules of quantum groups—it should essentially work for tilting objects in any highest weight category. However, certain technical difficulties arise from trying to work in complete generality.

The key limitation of the construction of cellular bases in \cite{AST}, as noted in \cite[Section 5A.7]{AST}, is the reliance on weight space decompositions of modules for quantum groups in order to prove the crucial result \cite[Theorem 3.1]{AST}. Specifically, by restricting morphisms to weight spaces, one is able to define a filtration on Hom-spaces between modules, which plays an important role in the construction. However, weight spaces do not exist in arbitrary highest weight categories. Another necessary condition is that the (isomorphism classes of) indecomposable tilting objects are in (canonical) bijection with the simple objects, as in Ringel's theory \cite{Ringel-Filtrations}. Again, this does not hold in general highest weight categories.

The purpose of this paper is to specify a general categorical setup to which the construction from \cite{AST} can be generalized. To this end, we introduce in \Autoref{section_tilting} the notion of a ``standard category'' which is simply a category with standard, costandard, and tilting objects behaving in the desired way. Our assumptions are rather mild and include (split) highest weight categories \cite{CPS} with finitely many simple objects and the recently introduced lower finite highest weight categories of Brundan and Stroppel \cite{BrundanStroppel}. To overcome the key limitation caused by the reliance on weight spaces, we define instead a categorical filtration on Hom-spaces, see \eqref{cellular_newsection_key_modification}. This new filtration enables one to repeat the proofs of \cite{AST} verbatim, see \Autoref{cellularity_section}.

The involution, which is part of the cellular structure, comes in our general setting from a duality on the category. Not all dualities give rise to an involution and this issue leads us to the notion of a ``standard duality''. First, a standard duality should exchange standard and costandard objects. This implies that all tilting objects are self-dual and the duality induces an isomorphism on the endomorphism algebra of a tilting object. But, in general, the square of this isomorphism is just \emph{some} inner automorphism. To ensure that we get an involution, the duality needs to ``fix'' the indecomposable tilting objects in a categorical sense. We will discuss this in \Autoref{subsec_duality}.

Finally, we note that even in the absence of a duality on the category one can still perform the construction to obtain combinatorial bases that allows one to define cell modules which will encapsulate the same combinatorics as for cellular bases. Such bases are called \word{standard bases} and have been introduced by Du and Rui \cite{DuRui} in 1998.\footnote{We note that being cellular (i.e. admitting a cellular basis) is a restrictive property for an algebra whereas having a standard basis is not. In other words, every algebra admits at least one standard basis; see~\cite{KonigXiCellular}. Nonetheless, a particular choice of standard basis may still encode interesting combinatorics.} \\

Our main result is thus the following generalization of the main result of \cite{AST} (the precise versions of the two statements are \Autoref{end_tilting_is_cellular,cellularity_thm_precise}):

\begin{thm*}
Let $\mathcal{C}$ be a standard category.
\begin{enumerate}
  \item \label{main_thm_part1} For any tilting object $T \in \mathcal{C}$ one can construct a standard basis on the algebra $\End_{\mathcal{C}}(T)$ by ``factorization through the indecomposable tilting objects''.
  \item If $\mathcal{C}$ is equipped with a standard duality $\bbD$, then the construction given in (\ref{main_thm_part1}) can be done in such a way that the resulting basis is cellular with respect to the anti-involution on $\End_{\mathcal{C}}(T)$ induced by $\bbD$. In particular, $\End_{\mathcal{C}}(T)$ is a cellular algebra.
\end{enumerate}
\end{thm*}

It follows from the theorem and the construction of the standard bases that the full subcategory $\mathcal{C}^t$ of tilting objects in $\mathcal{C}$ is a (strictly) object-adapted cellular category in the sense of Elias--Lauda \cite{EliasLauda}, see also \cite[\S11]{EMTW} and \cite[Remark~5.7]{BrundanStroppel}.\footnote{This has been noticed recently by Andersen \cite{Andersen-Cellular} in a more special setting.} In \cite[Theorem~5.10]{BrundanStroppel}, the authors prove a similar theorem, in the context of lower finite highest weight categories with Chevalley duality.\\

Motivated by this result we would like to pose the following fundamental question:

\begin{quest*}
Is every finite-dimensional split cellular algebra the endomorphism algebra of a tilting object?
\end{quest*}


So far, the concept of cellularity has defied all attempts at a categorical characterization. We hope this question may lead to new insights. Currently, it is not clear how to give a definitive answer. Moreover, if one can realize a cellular algebra as the endomorphism algebra of a tilting object, it is not clear whether the above construction can be used to construct \emph{all} cellular structures on the algebra.

An important test case for our question is the partition algebra \cite{MartinSaleur, Martin-PartitionAlgebra} which was proven to be cellular in \cite{Xi-PartitionAlgebra-Cellular}. As far as we are aware, it is currently not known if the partition algebra can be realized as the endomorphism algebra of a tilting object. \\

Aside from raising this question we have specific applications in mind requiring our generalization. The first application, which we address in \Autoref{hecke_example}, concerns Hecke algebras. Let $\mathcal{H}_{\mathbf{q}}(W)$ be the Hecke algebra associated to a finite complex reflection group $W$ at \textit{arbitrary parameter} $\mathbf{q}$, as introduced by Broué, Malle, and Rouquier \cite{Malle-Rouquier-2003}. This algebra is a generalization of the ``classical'' Hecke algebra \cite{GeckPfeiffer} associated to a finite Coxeter group. It is known from \cite{GGOR} that $\mathcal{H}_{\mathbf{q}}(W)$ arises as the endomorphism algebra of a tilting object in the category $\mathcal{O}$ of the rational Cherednik algebra $H_{1,\mathbf{c}}(W)$. The latter algebra was introduced by Etingof and Ginzburg \cite{EG}, with $\mathbf{c}$ a logarithm of $\mathbf{q}$. Our theorem implies that $\mathcal{H}_{\mathbf{q}}(W)$ carries a natural standard basis. Moreover, if $W$ is a real reflection group (i.e. a finite Coxeter group), then $\mathcal{O}$ is equipped with a ``standard duality''  making this standard basis a cellular basis. That is, $\mathcal{H}_{\mathbf{q}}(W)$ is cellular. The cellularity of Hecke algebras associated to finite Coxeter groups is a deep result that was proven by Geck \cite{Geck:2007ki} in 2007. Our approach to cellularity is completely different to the one Geck has taken and we make the following observations:
\begin{enumerate}
\item Geck must assume Lusztig's conjectures P1--P15 (which are still open for unequal parameters). Our result covers the unequal parameter case without further assumptions.

\item Geck covers more general base rings—especially of positive characteristic—whereas our result only applies to Hecke algebras over the complex numbers. It would be interesting to know whether there is a modular analogue of category $\mathcal{O}$ of the rational Cherednik algebra that can be used to generalize our result to positive characteristic. The diagrammatic Cherednik algebra of Webster \cite{WebsterDiagrammatic} and Bowman \cite{BowmanBasis} is a likely candidate for this.

\item Currently, we do not have an explicit description of our bases and do not know how they relate to Geck's (and other) cellular bases.

\item Our construction works for the Hecke algebra associated to an arbitrary complex reflection group $W$, not just for finite Coxeter groups. By Du--Rui \cite{DuRui}, we obtain a general theory of ``cell modules'' for these Hecke algebras. We think this is interesting in light of the ``spetses'' program \cite{Spetses1, Spetses2, BR}.

\item Assuming Lusztig's conjectures P1--P15, work of Chlouveraki--Gordon--Griffeth \cite{GordonGriffChlo} implies that the ``costandard modules'' resulting from our construction coincide with Geck's cell modules for Hecke algebras associated to Coxeter groups\footnote{We would like to thank Chris Bowman--Scargill for bringing this to our attention.}; see \Autoref{rem:cellHeckeGeck}.

\item In \cite{BowmanBasis}, Bowman shows for the infinite series $G(m,1,n)$ that the Hecke algebra $\mathcal{H}_{\mathbf{q}}(W)$ has (infinitely many) graded cellular structures. This is generalized to the groups $G(m,p,n)$ in \cite{HuMathasRostam}. We do not know how these graded bases relate to our bases.

\end{enumerate}

Another application, that we discuss in \cite{corepaper}, is the following. In \cite{hwtpaper} we studied finite-dimensional graded algebras $A$ admitting a triangular decomposition. Typical examples are restricted enveloping algebras, Lusztig’s small quantum groups, hyperalgebras, finite quantum groups, and restricted rational Cherednik algebras, see \cite{hwtpaper}. As we argue in \cite{corepaper}, these algebras are generally not cellular. However, using our main theorem we can obtain a natural cellular algebra associated to $A$. Namely, under the assumption that $A$ is self-injective—which is true in all the above mentioned examples—it follows from \cite{hwtpaper} that the category $\mathcal{G}(A)$ of graded finite-dimensional $A$-modules is a standard category (with infinitely many simple objects). Moreover, we have shown that $A \in \mathcal{G}(A)$ is a tilting object. Noting that $\End_{\mathcal{G}(A)}(A)^{\mathrm{op}}$ is isomorphic to the subalgebra $A_0$ of degree-0 elements of $A$, we obtain from our main theorem a standard basis of $A_0$. When $A$ is graded symmetric and there is a ``triangular'' anti-involution on $A$, we show in \cite{corepaper} that $\mathcal{G}(A)$ is equipped with a standard duality making $A_0$ cellular. The degree-0 subalgebra $A_0$ is interesting as it remembers the graded decomposition numbers of $A$; see \cite{corepaper}.

\begin{rem*}
A first draft of our results appeared as part of the preprint \cite{corepaperold}. However, we realized that a naive duality does not usually give rise to the required anti-involution, and it was not clear what the correct notion of duality should be. In the meantime, Brundan and Stroppel \cite{BrundanStroppel} and Andersen \cite{Andersen-Cellular} also developed  generalizations of the construction of cellular bases from \cite{AST}. We think that our categorical filtration on the Hom-spaces, as well as the clarification of the precise assumptions needed for \Autoref{end_tilting_is_cellular,cellularity_thm_precise} to hold, is a worthwhile complement to their work.
\end{rem*}

\subsection*{Acknowledgments}
We would like to thank J. Brundan, C. Bowman--Scargill, G.~Malle, J. Schmitt, C. Stroppel, and E. Thorn for comments on a preliminary version of this paper. We would especially like to thank S. Koenig for patiently answering several questions. We would also like to thank the referee of \textit{Adv. Math.} for valuable comments. The first author was partially supported by EPSRC grant EP/N005058/1. The second author was partially supported by the DFG SPP 1489, by a Research Support Fund from the Edinburgh Mathematical Society, and by the Australian Research Council Discovery Projects grant no. DP160103897. This work is a contribution to the SFB-TRR 195 `Symbolic Tools in Mathematics and their Application' of the German Research Foundation (DFG).

\tableofcontents

\section{Standard categories} \label{section_tilting}

In this section we will specify the categorical setup to which we can generalize the construction of the bases from \cite{AST}. We require categories with standard, costandard, and tilting objects behaving in the desired way---as in (sufficiently nice) highest weight categories \cite{CPS}. However, we will not assume that our categories have enough injectives—and thus we will refrain from the usual assumption made in highest weight categories that there exists a filtration by costandard objects on indecomposable injective envelopes—to be able to include examples such as categories of finite-dimensional modules for quantum groups in positive characteristic, as considered in~\cite{AST} (see also \Autoref{uq_mod_examples}). We will instead impose specific Ext-vanishing conditions which imply the desired features. For the sake of this paper we will call such categories ``standard categories'' because they deal with standard objects and eventually lead to ``standard bases''. Split highest weight categories \cite{CPS} with finitely many simple objects and lower finite highest weight categories in the sense of Brundan and Stroppel \cite{BrundanStroppel} fit into our framework, see \Autoref{finite_hwc_has_everything} and \Autoref{lfhw_has_everything}.

\subsection{Locally finite abelian categories}

We begin with standard categorical results that will be required in the next section.

\begin{ass} \label{ass_lf}
We assume that $\mathcal{C}$ is an essentially small and locally finite abelian category over a field $K$.
\end{ass}

By \word{locally finite} we mean, as in \cite[\S1.8]{EGNO}, that all objects are of finite length (i.e. admit a composition series) and that $\Hom_K(X
,Y)$ is a finite-dimensional $K$-vector space for all $X,Y \in \mathcal{C}$. By the Jordan–Hölder theorem (see \cite[\S4.5, Theorem 1]{Pareigis}) the multiplicity of a simple object $S$ as a quotient in a composition series of $X$ is independent of the choice of the composition series. We denote this multiplicity by $\lbrack X \colon S \rbrack$. Since~$\mathcal{C}$ is essentially small, the class $\Sub(X)$ of subobjects of $X$ (equivalence classes of monomorphisms into $X$) is a \emph{set}. The usual partial order $\leq$ on subobjects makes $\Sub(X)$ into a partially ordered set. Since $X$ is of finite length, this partially ordered set is artinian and noetherian, see \cite[\S4.5, Corollary 1]{Pareigis}. In particular, the category~$\mathcal{C}$ is artinian and noetherian in the sense of Gabriel \cite[\S II.4]{Gabriel}.

The supremum in $\Sub(X)$ of a (small) family $(U_i)_{i \in I}$ of subobjects $U_i$ of $X$ is denoted by $\cup_{i \in I} U_i$ if it exists and is called the \word{union} of the family. Similarly, the infimum is denoted by $\cap_{i \in I} U_i$ and is called the \word{intersection}. Since $\mathcal{C}$ is abelian, unions and intersections of \emph{finite} families exist, see \cite[\S4.3, Lemma 5]{Pareigis}. In particular, $\Sub(X)$ is a \emph{lattice} with respect to union and intersection. But since $\Sub(X)$ is noetherian and artinian, in fact \word{arbitrary} (i.e. not necessarily finite) unions and intersections exist, i.e. $\Sub(X)$ is a \word{complete} lattice, as shown below.

\begin{lem}\label{lem:comletelattice}
A noetherian lattice with minimal element (or an artinian lattice with maximal element) is complete.
\end{lem}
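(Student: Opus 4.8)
The statement to prove is: a noetherian lattice with a minimal element is complete (the artinian-with-maximal case then follows by passing to the opposite lattice). Completeness means every subset $S$ has a supremum (equivalently, every subset has an infimum — one of these conditions implies the other in any poset with a top/bottom element, so it suffices to produce suprema). So let $L$ be a noetherian lattice with minimal element $0$, and let $S \subseteq L$ be an arbitrary subset; I must exhibit $\bigvee S$.

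\medskip

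The key idea is to build the supremum as the supremum of the (directed) family of \emph{finite} joins of elements of $S$, and then use the noetherian condition to force that family to stabilize. Concretely, let
\[
F = \Bigl\{\, x_1 \vee x_2 \vee \cdots \vee x_n \ :\ n \geq 0,\ x_i \in S \,\Bigr\},
\]
where the empty join ($n=0$) is interpreted as $0$, so $F$ is nonempty. Finite joins exist because $L$ is a lattice, so $F \subseteq L$ is a genuine subset, and it is closed under finite joins (and contains $S$ when $S\neq\emptyset$, else $F=\{0\}$). Since $L$ is noetherian, the poset $F$ satisfies the ascending chain condition, hence has a maximal element $m$. I claim $m = \bigvee S$. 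For any $x \in S$ we have $x \vee m \in F$ and $x \vee m \geq m$, so by maximality $x \vee m = m$, i.e. $x \leq m$; thus $m$ is an upper bound for $S$. And if $u$ is any upper bound for $S$, then $u$ is an upper bound for every finite join of elements of $S$, in particular $u \geq m$ (note $m$ is itself such a finite join, or $m=0\le u$ if $S=\emptyset$). Hence $m$ is the least upper bound, i.e. $\bigvee S$ exists and equals $m$.

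\medskip

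Having established that arbitrary suprema exist, I get arbitrary infima for free: given $S \subseteq L$, the set $S^{\ell}$ of lower bounds of $S$ contains $0$, hence is nonempty, so $\bigvee S^{\ell}$ exists by what was just shown; a routine check shows $\bigvee S^{\ell}$ is a lower bound of $S$ (since every element of $S$ is an upper bound of $S^\ell$) and is clearly the greatest one, so it is $\bigwedge S$. Therefore $L$ is complete. For the parenthetical variant, if $L$ is artinian with a maximal element, then $L^{\op}$ is noetherian with a minimal element, hence complete, hence $L$ is complete.

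\medskip

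\textbf{Main obstacle.} There is no serious obstacle here — the argument is elementary. The one point that needs a little care, and where I would be explicit rather than hand-wave, is the reduction between suprema and infima and the handling of the degenerate case $S = \emptyset$ (which forces the existence of a bottom element $\bigvee\emptyset = 0$ and a top element $\bigwedge\emptyset$, the latter being $\bigvee L$); keeping track of which of the two dual hypotheses (minimal element vs.\ maximal element) is actually used at each step is the only thing that could trip one up.
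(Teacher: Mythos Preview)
Your proof is correct and follows essentially the same route as the paper: form the set of finite joins of elements of $S$, use the noetherian condition to extract a maximal element, and verify it is the supremum; then recover infima as the supremum of the set of lower bounds, and handle the artinian case by duality. The only cosmetic difference is that you absorb the empty case into the definition of $F$ via the empty join $0$, whereas the paper treats $S=\emptyset$ separately.
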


\begin{proof}
We could not find a reference for this (likely well-known) fact so give a proof. Let $(L,\leq)$ be a noetherian lattice with minimal element $0$. Let $X \subs L$ be a subset. We show that $X$ has a supremum. If $X$ is empty, then $\bigcup X = 0$ is the minimal element. So, assume $X$ is not empty. Since $L$ is a lattice, the supremum of every finite set exists and therefore we can define
\[
Y \coloneqq \{ \bigcup F \mid F \subs X \tn{ is finite and non-empty} \} \;.
\]
Since $X$ is non-empty, $Y$ is also non-empty and since $L$ is noetherian, $Y$ has a maximal element $m$. Since $m \in Y$, we have $m = \bigcup F$ for some finite non-empty subset $F \subs X$. We claim that $m$ is an upper bound of $X$, i.e., $x \leq m$ for any $x \in X$. Since $F \cup \{ x \}$ is a finite non-empty subset of $X$, we have $\bigcup \left( F \cup \{ x \} \right) \in Y$. Moreover, $m = \bigcup F \leq \bigcup \left( F \cup \{ x \} \right) \leq m$ since $m$ is maximal in $Y$. Hence, $x \leq m$. This proves that $m$ is an upper bound of $X$. Let $u$ be any other upper bound of $X$. Then $u$ is an upper bound of $F \subs X$, hence $m = \bigcup F \leq u$. Hence, $m$ is the least upper bound of $X$, i.e., $m = \bigcup X$. This proves that the supremum of any subset of $L$ exists. It then follows automatically that any non-empty subset $X \subs L$ has an infimum as well since this is given by
\[
\bigcap X = \bigcup \{ y \in L \mid y \leq x \ \forall x \in X \} \;.
\]
Dually, one can prove that an artinian lattice with maximal element is complete.
\end{proof}

In the context of highest weight categories one usually assumes the \emph{Grothendieck condition}; see \cite{CPS}. We want to argue that this holds for any noetherian category $\mathcal{C}$, in particular for the categories we consider here. First, recall that an object $X \in \mathcal{C}$ is said to satisfy the Grothendieck condition if the union of any directed family $(U_i)_{i \in I}$ of subobjects of $X$ exists and for any further subobject $V$ of $X$ the relation
\[
V \cap \left( \bigcup_{i \in I} U_i \right) = \bigcup_{i \in I} (V \cap U_i)
\]
holds.\footnote{In the formulation of the Grothendieck condition by Cline--Parshall--Scott in \cite{CPS} it is not assumed that the families $(U_i)_{i \in I}$ are \emph{directed}. This seems to be a minor inaccuracy since in \cite{CPS} it is only assumed that the union of a directed set of subobjects exists, so a general union is not assumed to exist. The definition of the Grothendieck condition given here is the standard one, see e.g. \cite[\S4.7]{Pareigis}.} The category $\mathcal{C}$ is said to satisfy the Grothendieck condition if all its objects satisfy it.

\begin{lem}
A noetherian abelian category satisfies the Grothendieck condition.
\end{lem}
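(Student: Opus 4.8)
The plan is to reduce the Grothendieck condition to the elementary observation that, in a noetherian poset, every directed family attains its supremum as a genuine maximum. So first I would fix an object $X \in \mathcal{C}$ together with a directed family $(U_i)_{i \in I}$ of subobjects of $X$. Since $\mathcal{C}$ is noetherian, the poset $\Sub(X)$ is noetherian, so the subset $\{ U_i \mid i \in I \} \subseteq \Sub(X)$ admits a maximal element $U_{i_0}$. I would then promote this maximal element to a maximum using directedness: given $i \in I$, choose $j \in I$ with $U_i \leq U_j$ and $U_{i_0} \leq U_j$; maximality of $U_{i_0}$ forces $U_j = U_{i_0}$, whence $U_i \leq U_{i_0}$. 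Therefore $\bigcup_{i \in I} U_i$ exists and equals $U_{i_0}$. (Existence of arbitrary unions also follows from \Autoref{lem:comletelattice}, but the stabilization is the point.)

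Second, I would verify the distributivity identity $V \cap \left( \bigcup_{i \in I} U_i \right) = \bigcup_{i \in I} (V \cap U_i)$ for an arbitrary subobject $V \leq X$. Since $W \mapsto V \cap W$ is order-preserving on $\Sub(X)$, the family $(V \cap U_i)_{i \in I}$ is again directed over the same index set; by the first step it has a maximum. Because $U_i \leq U_{i_0}$ for every $i$, we have $V \cap U_i \leq V \cap U_{i_0}$ for every $i$, and $V \cap U_{i_0}$ itself occurs in the family, so that maximum is exactly $V \cap U_{i_0}$, which equals $V \cap \bigcup_{i \in I} U_i$. This is precisely the Grothendieck condition for $X$; as $X$ is arbitrary, $\mathcal{C}$ satisfies the Grothendieck condition.

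I do not anticipate any genuine obstacle: the content of the argument is that noetherianness trivializes directed unions of subobjects---they are maxima---so distributivity of intersection over them collapses to the tautology that intersecting with $V$ preserves order. The only input from the abelian structure is the existence of \emph{finite} intersections of subobjects, needed just to form $V \cap U_i$; everything else is poset theory on $\Sub(X)$.
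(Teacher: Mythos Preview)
Your proof is correct and is in fact more direct than the paper's. The paper takes a detour through Gabriel's theorem \cite[Th\'eor\`eme II.4.1]{Gabriel}, embedding $\mathcal{C}$ as the full subcategory of noetherian objects in a locally noetherian category $\widehat{\mathcal{C}}$; it forms the union $\bigcup_i U_i$ inside $\widehat{\mathcal{C}}$ (where directed colimits exist), observes that this union is a subobject of the noetherian object $X$ and hence lies back in $\mathcal{C}$, and only then concludes---by exactly the noetherian-plus-directed reasoning you give---that the union equals some $U_j$. From that point the two arguments verify the distributive identity identically. Your approach bypasses the embedding entirely by working purely inside the poset $\Sub(X)$: noetherianness gives a maximal element of the family, directedness promotes it to a maximum, and that maximum \emph{is} the supremum. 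This is more economical, requiring no external structure theorem; the paper's detour through $\widehat{\mathcal{C}}$ buys nothing additional for this particular lemma, though the embedding is of course a standard tool elsewhere.
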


\begin{proof}
Let $\mathcal{C}$ be a noetherian abelian category.
 It follows from \cite[Théorème II.4.1]{Gabriel} that $\mathcal{C}$ is equivalent to the full subcategory of noetherian objects of some locally noetherian category $\wh{\mathcal{C}}$. Here, \emph{locally noetherian} means that the category has exact direct limits and has a small family of generators which are noetherian. Let $X \in \mathcal{C}$ and let $(U_i)_{i \in I}$ be a directed family of subobjects of $X$. In $\wh{\mathcal{C}}$ we can form the union $\bigcup_{i \in I} U_i$. Since this is a subobject of the noetherian object $X$, it is again noetherian and belongs to $\mathcal{C}$. Because this object is noetherian and the family is directed, there must be some index $j$ such that $\bigcup_{i \in I} U_i = U_j$. For any further subobject $V$ of $X$ we then have
\[
V \cap \left( \bigcup_{i \in I} U_i \right) = V \cap U_j = \bigcup_{i \in I} (V \cap U_i) \;.
\]
The second equality above follows from the fact that since $\bigcup_{i \in I} U_i = U_j$ we have $U_i \leq U_j$ for all $i \in I$, hence $V \cap U_i \leq V \cap U_j$ for all $i \in I$ which implies $\bigcup_{i \in I} (V \cap U_i) \leq V \cap U_j$, and the other inclusion is clear. Hence $\mathcal{C}$ satisfies the Grothendieck condition.
\end{proof}

Since $\mathrm{Sub}(X)$ is complete, we can define the \word{radical} and \word{socle} of $X$ as
\begin{equation}
\Rad(X) \coloneqq \bigcap_{ \mathclap{\substack{M \in \Sub(X) \\ M \text{ is maximal}}}} M \quad \quad \text{and} \quad
\Soc(X) \coloneqq \bigcup_{ \mathclap{\substack{N \in \Sub(X) \\ N \tn{ is minimal}}}} N \;,
\end{equation}
respectively. The quotient $\Hd(X) \coloneqq X / \Rad(X)$ is called the \word{head} of $X$.

\subsection{Simple objects and (co)standard objects}

\begin{ass} \label{ass_simples}
There is a complete set $\{ L(\lambda) \}_{\lambda \in \Lambda}$ of representatives of isomorphism classes of simple objects of $\mathcal{C}$ indexed by a set $\Lambda$ equipped with a partial order~$\leq$.
\end{ass}

Note that we do not assume that $\Lambda$ is finite. The partial order is used in the following definitions.

\begin{defn}\label{defn:highest_weight}
We say that an object $M \in \mathcal{C}$ is of \word{highest weight} $\lambda \in \Lambda$ if $\lbrack M \colon L(\mu) \rbrack \neq 0$ implies $\mu \leq \lambda$ and $\lbrack M \colon L(\lambda) \rbrack = 1$.
\end{defn}

\begin{defn}\label{defn:(co)standard}
A \word{costandard object} for $L(\lambda)$ is an object $\nabla(\lambda)$ such that $\Soc \nabla(\lambda) \simeq L(\lambda)$ and all composition factors $L(\mu)$ of $\nabla(\lambda)/\Soc \nabla(\lambda)$ satisfy $\mu < \lambda$. Dually, a \word{standard object} is an object $\Delta(\lambda)$ such that $\Hd \Delta(\lambda) \simeq L(\lambda)$ and all composition factors $L(\mu)$ of $\Rad \Delta(\lambda)$ satisfy $\mu < \lambda$.
\end{defn}

Both $\Delta(\lambda)$ and $\nabla(\lambda)$ are of highest weight $\lambda$. Note that since $\Hd \Delta(\lambda)$ is simple, it follows that $\Rad \Delta(\lambda)$ is the unique maximal subobject of $\Delta(\lambda)$. This implies that the epimorphism $\pi \colon \Delta(\lambda) \twoheadrightarrow \Hd \Delta(\lambda)$ is \word{essential}, i.e. if $\varphi \colon X \to \Delta(\lambda)$ is any morphism such that $\pi \circ \varphi$ is an epimorphism, then $\varphi$ must be an epimorphism. Dually, $\Soc \nabla(\lambda)$ is the unique minimal subobject and the monomorphism $\Soc \nabla(\lambda) \hookrightarrow \nabla(\lambda)$ is \emph{essential}.

\begin{rem}
Obviously, $L(\lambda)$ itself is a (co)standard object. \Autoref{ass_ext_vanishing} below will impose more conditions that usually rule out this possibility. More interesting (co)standard objects arise as follows. Suppose that $L(\lambda)$ admits an injective hull $I(\lambda)$ in $\mathcal{C}$. Then there is a unique largest subobject $\nabla(\lambda)$ of $I(\lambda)$ which is a costandard object for $L(\lambda)$. Namely, let $\mathcal{U}$ be the set of non-zero subobjects $U$ of $I(\lambda)$ with the property that all composition factors $L(\mu)$ of $U/L(\lambda)$ satisfy $\mu < \lambda$ (note that $L(\lambda)$ is the unique minimal subobject of $I(\lambda)$). By definition, all objects in $\mathcal{U}$ are costandard objects for $L(\lambda)$. Since $L(\lambda) \in \mathcal{U}$, we have $\mathcal{U} \neq \emptyset$. Let $U,U' \in \mathcal{U}$. The union $U \cup U'$ is the image of the natural morphism $U \oplus U' \to I(\lambda)$, so $U \cup U'$ is a quotient of $U \oplus U'$. We clearly have
\[
\lbrack U \oplus U' \colon L(\mu) \rbrack \leq \lbrack U \colon L(\mu) \rbrack + \lbrack U' \colon L(\mu) \rbrack \;,
\]
which implies that $U \cup U' \in \mathcal{U}$ as well, i.e. $\mathcal{U}$ is closed under finite unions. Since $\mathcal{C}$ is artinian, we conclude from Lemma~\ref{lem:comletelattice} that $\mathcal{U}$ has a unique maximal element $\nabla(\lambda)$. Dually, if $L(\lambda)$ admits a projective cover $P(\lambda)$, then $P(\lambda)$ admits a unique largest quotient which is a standard object for $L(\lambda)$.
\end{rem}

Recall that we do not necessarily assume that $\mathcal{C}$ has enough injectives. We denote by $\Ind \mathcal{C}$ the \word{ind-completion} of $\mathcal{C}$; see \cite[\S6, \S8.6]{KS} for details. The category $\Ind \mathcal{C}$ is abelian and there is an exact embedding of $\mathcal{C}$ into $\Ind \mathcal{C}$. Since $\mathcal{C}$ is essentially small, it follows from \cite[Theorem 8.6.5(vi)]{KS} that $\Ind \mathcal{C}$ is a Grothendieck category, hence $\Ind \mathcal{C}$ has enough injectives by \cite[Theorem 9.6.2]{KS}. For $X,Y \in \mathcal{C}$ let $\Ext_{\mathcal{C}}^n(X,Y)$ be the set of equivalence classes of $n$-term Yoneda extensions of $Y$ by $X$ as in \cite[III.3.2]{Verdier} together with the Baer sum of extensions as addition. Similarly, we can consider Ext-groups in $\Ind \mathcal{C}$. Since $\Ind \mathcal{C}$ has enough injectives, these groups are also given as usual via a right derived functor. The embedding of $\mathcal{C}$ into $\Ind \mathcal{C}$ induces a group morphism
\begin{equation}
	\Ext_{\mathcal{C}}^n(X,Y) \to \Ext_{\Ind \mathcal{C}}^n(X,Y) \;.
\end{equation}
It is proven in \cite{Kevin} that this morphism is an isomorphism; as noted there, this result can be derived from \cite[Theorem 15.3.1]{KS} in principle. The conclusion is that we have connecting $\delta$-morphisms between the Ext-groups in $\mathcal{C}$, as though $\mathcal{C}$ had enough injectives. We can now formulate Ext-vanishing assumptions between (co)standard objects.

\begin{ass} \label{ass_ext_vanishing}
	We assume that each $L(\lambda)$ has a costandard object~$\nabla(\lambda)$ and a standard object $\Delta(\lambda)$ such that the following condition holds for all $\lambda,\mu \in \Lambda$ and $0 \leq i \leq 2$:
\[
\Ext_\mathcal{C}^i(\Delta(\lambda), \nabla(\mu)) = \left\lbrace \begin{array}{ll} K & \tn{if } i=0 \tn{ and } \lambda = \mu \;, \\ 0 & \tn{else}  \;.\end{array} \right.
\]
\end{ass}

This assumption allows us to derive the following properties of standard and costandard objects, well-known from the theory of highest weight categories.

\begin{lem} \label{ext_lemma}
\hfill
\begin{enumerate}
	\item $\End_{\mathcal{C}}(L(\lambda)) \simeq K$, $\End_{\mathcal{C}}(\nabla(\lambda)) \simeq K$, and $\End_{\mathcal{C}}(\Delta(\lambda)) \simeq K$.
	\item If $\Ext_{\mathcal{C}}^1(L(\lambda),\nabla(\mu)) \neq 0$ or $\Ext_{\mathcal{C}}^1(\Delta(\mu),L(\lambda)) \neq 0$, then $\mu < \lambda$.
	\item If $\Ext_{\mathcal{C}}^1(L(\mu),L(\lambda)) \neq 0$, then $\mu < \lambda$ or $\lambda < \mu$.
	\item If $\Ext_{\mathcal{C}}^1(\nabla(\mu),\nabla(\lambda)) \neq 0$ or $\Ext_{\mathcal{C}}^1(\Delta(\mu),\Delta(\lambda)) \neq 0$, then $\mu > \lambda$.
\end{enumerate}
\end{lem}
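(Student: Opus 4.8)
Below is a plan for the proof of \Autoref{ext_lemma}; it is not yet a finished argument.

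The plan is to deduce all four parts from \Autoref{ass_ext_vanishing} by feeding suitable short exact sequences into the long exact $\Ext^{\bullet}_{\mathcal{C}}$-sequences established above, combined with an induction along composition series --- exactly as in the classical theory of highest weight categories. As notation I would fix the structure maps $\pi_{\lambda}\colon\Delta(\lambda)\twoheadrightarrow L(\lambda)$ and $\iota_{\lambda}\colon L(\lambda)\hookrightarrow\nabla(\lambda)$ and their composite $j_{\lambda}\coloneqq\iota_{\lambda}\circ\pi_{\lambda}\colon\Delta(\lambda)\to\nabla(\lambda)$; being nonzero, $j_{\lambda}$ spans $\Hom_{\mathcal{C}}(\Delta(\lambda),\nabla(\lambda))\cong K$ by the case $i=0$ of \Autoref{ass_ext_vanishing}. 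I would also use throughout that every object has finite length, so that any nonzero morphism into $\nabla(\mu)$ has image containing the unique minimal subobject $\Soc\nabla(\mu)=L(\mu)$, while any nonzero morphism out of $\Delta(\mu)$ has image with head $L(\mu)$; combined with the definitions of the (co)standard objects this turns ``Hom is nonzero'' statements into inequalities in $\Lambda$.

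For part~(1), the first step is to check that any nonzero endomorphism of $L(\lambda)$, $\Delta(\lambda)$, or $\nabla(\lambda)$ is an isomorphism. For $L(\lambda)$ this is Schur. For $\Delta(\lambda)$: a nonzero endomorphism $f$ has $\pi_{\lambda}\circ f\neq 0$ --- otherwise $f$ factors through $\Rad\Delta(\lambda)$, and since $\Rad\Delta(\lambda)$ has no composition factor $L(\lambda)$ while every nonzero quotient of $\Delta(\lambda)$ has head $L(\lambda)$, this forces $f=0$ --- hence $\pi_{\lambda}\circ f$ is an epimorphism onto the simple $L(\lambda)$, so $f$ is an epimorphism by essentiality of $\pi_{\lambda}$, hence an isomorphism by finiteness of length; the case of $\nabla(\lambda)$ is dual, using essentiality of $\iota_{\lambda}$. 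Thus each endomorphism ring in question is a division $K$-algebra, finite-dimensional over $K$ by \Autoref{ass_lf}. The second step: post-composition $f\mapsto j_{\lambda}\circ f$, pre-composition $g\mapsto g\circ j_{\lambda}$, and $h\mapsto\iota_{\lambda}\circ h\circ\pi_{\lambda}$ give $K$-linear maps from $\End_{\mathcal{C}}(\Delta(\lambda))$, $\End_{\mathcal{C}}(\nabla(\lambda))$, $\End_{\mathcal{C}}(L(\lambda))$, respectively, into $\Hom_{\mathcal{C}}(\Delta(\lambda),\nabla(\lambda))\cong K$; all three are injective --- for the first two because a nonzero element of a division algebra is invertible while $j_{\lambda}\neq 0$, and for the third because $\pi_{\lambda}$ is an epimorphism and $\iota_{\lambda}$ a monomorphism. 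A one-dimensional $K$-subalgebra of $K$ containing the identity equals $K$, which finishes part~(1).

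For part~(2), applying $\Hom_{\mathcal{C}}(-,\nabla(\mu))$ to $0\to\Rad\Delta(\lambda)\to\Delta(\lambda)\to L(\lambda)\to 0$ and using $\Ext^{1}_{\mathcal{C}}(\Delta(\lambda),\nabla(\mu))=0$ yields a surjection $\Hom_{\mathcal{C}}(\Rad\Delta(\lambda),\nabla(\mu))\twoheadrightarrow\Ext^{1}_{\mathcal{C}}(L(\lambda),\nabla(\mu))$; if the right-hand side is nonzero we get a nonzero map $\Rad\Delta(\lambda)\to\nabla(\mu)$ whose image contains $L(\mu)$, so $L(\mu)$ is a composition factor of $\Rad\Delta(\lambda)$ and $\mu<\lambda$. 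Dually, applying $\Hom_{\mathcal{C}}(\Delta(\mu),-)$ to $0\to L(\lambda)\to\nabla(\lambda)\to\nabla(\lambda)/L(\lambda)\to 0$ reduces $\Ext^{1}_{\mathcal{C}}(\Delta(\mu),L(\lambda))\neq 0$ to a nonzero map $\Delta(\mu)\to\nabla(\lambda)/L(\lambda)$ whose image is a nonzero quotient of $\Delta(\mu)$, so $L(\mu)$ is a composition factor of $\nabla(\lambda)/L(\lambda)$ and again $\mu<\lambda$. For part~(3), apply $\Hom_{\mathcal{C}}(L(\mu),-)$ to $0\to L(\lambda)\to\nabla(\lambda)\to\nabla(\lambda)/L(\lambda)\to 0$: a nonzero class in $\Ext^{1}_{\mathcal{C}}(L(\mu),L(\lambda))$ either has nonzero image in $\Ext^{1}_{\mathcal{C}}(L(\mu),\nabla(\lambda))$, whence $\lambda<\mu$ by~(2), or lies in the image of the connecting map applied to a nonzero --- hence injective --- morphism $L(\mu)\to\nabla(\lambda)/L(\lambda)$, whence $\mu<\lambda$.

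For part~(4) one combines~(2) with an induction on composition length. Applying $\Hom_{\mathcal{C}}(-,\nabla(\lambda))$ to $0\to L(\mu)\to\nabla(\mu)\to\nabla(\mu)/L(\mu)\to 0$ reduces $\Ext^{1}_{\mathcal{C}}(\nabla(\mu),\nabla(\lambda))\neq 0$ to the non-vanishing of $\Ext^{1}_{\mathcal{C}}(L(\mu),\nabla(\lambda))$ --- which gives $\lambda<\mu$ by~(2) --- or of $\Ext^{1}_{\mathcal{C}}(\nabla(\mu)/L(\mu),\nabla(\lambda))$; in the latter case one inducts on the length of an object all of whose composition factors $L(\nu)$ satisfy $\nu<\mu$, using that $\Ext^{1}_{\mathcal{C}}(L(\nu),\nabla(\lambda))\neq 0$ forces $\lambda<\nu$, to conclude $\lambda<\mu$ again. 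The assertion about $\Ext^{1}_{\mathcal{C}}(\Delta(\mu),\Delta(\lambda))$ is obtained dually, by applying $\Hom_{\mathcal{C}}(\Delta(\mu),-)$ to the radical sequence $0\to\Rad\Delta(\lambda)\to\Delta(\lambda)\to L(\lambda)\to 0$ and running the same reduction with~(2). I do not expect a real obstacle here: the only conceptual point is in part~(1) --- a priori the endomorphism rings are division algebras possibly strictly larger than $K$, and this is exactly what the case $i=0$ of \Autoref{ass_ext_vanishing} is there to rule out --- and everything else is routine bookkeeping with long exact sequences and composition series.
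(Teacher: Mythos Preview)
Your proposal is correct and follows essentially the same strategy as the paper: feed the radical and socle short exact sequences into the long exact $\Ext$-sequence and use \Autoref{ass_ext_vanishing} together with composition-factor bookkeeping; your induction for part~(4) is precisely what the paper packages as the separate \Autoref{lem:compfactorext1}. The only noticeable difference is in part~(1): the paper embeds $\End_{\mathcal{C}}(\nabla(\lambda))$ into $\Hom_{\mathcal{C}}(L(\lambda),\nabla(\lambda))$ via the long exact sequence, after first computing $\Hom_{\mathcal{C}}(L(\lambda),\nabla(\mu))=\delta_{\lambda\mu}K$, whereas you first argue that each endomorphism ring is a division algebra (via essentiality of $\pi_\lambda$ and $\iota_\lambda$) and then embed it $K$-linearly into $\Hom_{\mathcal{C}}(\Delta(\lambda),\nabla(\lambda))\cong K$ by composing with $j_\lambda$. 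Both routes are equally short; your packaging is a touch more uniform across the three cases. One small wording issue: the maps $f\mapsto j_\lambda\circ f$ etc.\ are only $K$-linear embeddings, not algebra maps, so ``one-dimensional $K$-subalgebra of $K$'' should be replaced by the dimension count $\dim_K\End\le 1$, hence $=1$.
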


\begin{proof}
We show that all statements involving $\nabla$ hold. The remaining statements involving $\Delta$ instead of $\nabla$ can be proven dually. We begin with part (2). Applying $\Hom_{\mathcal{C}}(-,\nabla(\mu))$ to the exact sequence
\begin{equation}
	0 \to \Rad \Delta(\lambda) \to \Delta(\lambda) \to L(\lambda) \to 0
\end{equation}
yields the exact sequence
\begin{equation} \label{ext_lemma_leq1}
\begin{aligned}
	0 & \to \Hom_{\mathcal{C}}(L(\lambda),\nabla(\mu)) \to \Hom_{\mathcal{C}}(\Delta(\lambda),\nabla(\mu)) \to \Hom_{\mathcal{C}}(\Rad \Delta(\lambda),\nabla(\mu)) \\
	& \to \Ext_{\mathcal{C}}^1(L(\lambda),\nabla(\mu)) \to \Ext_{\mathcal{C}}^1(\Delta(\lambda),\nabla(\mu)) = 0 \;.
\end{aligned}
\end{equation}
Because $\Hom_{\mathcal{C}}(L(\lambda),\nabla(\mu))$ injects into $\Hom_{\mathcal{C}}(\Delta(\lambda),\nabla(\mu))$ and the latter is equal to $\delta_{\lambda \mu} K$ by assumption, it follows that
\begin{equation} \label{ext_lemma_hom_l_nabla}
	\Hom_{\mathcal{C}}(L(\lambda),\nabla(\mu)) = \delta_{\lambda \mu} K \;,
\end{equation}
since $\dim_K \Hom_{\mathcal{C}}(L(\lambda),\nabla(\lambda)) \ge 1$. If $f \colon \Rad \Delta(\lambda) \to \nabla(\mu)$ is a non-zero morphism, then $\Image f$ is a non-zero subobject of $\nabla(\mu)$, hence it must contain $\Soc \nabla(\mu) \simeq L(\mu)$. It follows that $\Rad \Delta(\lambda)$ has $L(\mu)$ as a composition factor and hence $\mu < \lambda$. We conclude that:
\begin{equation}  \label{ext_hom_rad_delta}
	\Hom_{\mathcal{C}}(\Rad \Delta(\lambda), \nabla(\mu)) \neq 0 \Longrightarrow \mu < \lambda \;.
\end{equation}
It follows from \eqref{ext_lemma_leq1} that $\Hom_{\mathcal{C}}(\Rad \Delta(\lambda),\nabla(\mu))$ is isomorphic to $\Ext_{\mathcal{C}}^1(L(\lambda),\nabla(\mu))$. Hence, if $\lambda = \mu$, then $\Ext_{\mathcal{C}}^1(L(\lambda),\nabla(\mu)) = 0$ and we conclude that
\begin{equation} \label{ext_lemma_ext_l_nabla}
	\Ext_{\mathcal{C}}^1(L(\lambda),\nabla(\mu)) \neq 0 \Longrightarrow \mu < \lambda \;,
\end{equation}
showing that (2) holds.

Next, we show that statement (1) holds. Applying $\Hom_{\mathcal{C}}(L(\mu),-)$ to the exact sequence
\begin{equation}\label{ext_lemma_seqd}
	0 \to L(\lambda) \to \nabla(\lambda) \to \nabla(\lambda)/\Soc \nabla(\lambda) \to 0
\end{equation}
yields the exact sequence
\begin{equation} \label{ext_lemma_seq2}
\begin{aligned}
0 & \to \Hom_{\mathcal{C}}(L(\mu),L(\lambda)) \to \Hom_{\mathcal{C}}(L(\mu),\nabla(\lambda)) \to \Hom_{\mathcal{C}}(L(\mu),\nabla(\lambda)/\Soc \nabla(\lambda)) \\
& \to \Ext_{\mathcal{C}}^1(L(\mu),L(\lambda)) \to \Ext_{\mathcal{C}}^1(L(\mu),\nabla(\lambda)) \;.
\end{aligned}
\end{equation}
Since $\Hom_{\mathcal{C}}(L(\mu),L(\lambda))$ injects into $\Hom_{\mathcal{C}}(L(\mu),\nabla(\lambda))$ and the latter is equal to $\delta_{\mu \lambda} K$ by \eqref{ext_lemma_hom_l_nabla}, it follows that
\begin{equation}
	\Hom_{\mathcal{C}}(L(\mu),L(\lambda)) = \delta_{\mu \lambda} K \;.
\end{equation}
Applying instead $\Hom_{\mathcal{C}}(-,\nabla(\lambda))$ to the short exact sequence \eqref{ext_lemma_seqd} yields the exact sequence
\begin{equation}
	\begin{aligned}
		0 & \to \Hom_{\mathcal{C}}(\nabla(\lambda)/\Soc \nabla(\lambda), \nabla(\lambda)) \to \Hom_{\mathcal{C}}(\nabla(\lambda),\nabla(\lambda)) \\ & \to \Hom_{\mathcal{C}}(L(\lambda),\nabla(\lambda)) \;.
	\end{aligned}
\end{equation}
If $f \colon \nabla(\lambda)/\Soc \nabla(\lambda) \to \nabla(\lambda)$ is a non-zero morphism, then $\Image f$ must contain $\Soc \nabla(\lambda) \simeq L(\lambda)$, hence $L(\lambda)$ must be a composition factor of $\nabla(\lambda)/\Soc \nabla(\lambda) $ but this is a contradiction. We conclude that $\Hom_{\mathcal{C}}(\nabla(\lambda)/\Soc \nabla(\lambda), \nabla(\lambda)) = 0$ and therefore $\Hom_{\mathcal{C}}(\nabla(\lambda),\nabla(\lambda)) $ injects into $\Hom_{\mathcal{C}}(L(\lambda),\nabla(\lambda))$. The latter is equal to $K$ by \eqref{ext_lemma_hom_l_nabla}, hence
\begin{equation}
\End_{\mathcal{C}}(\nabla(\lambda)) = K,
\end{equation}
which completes the proof of statement (1).

Next, we show that statement (3) holds. By Definition~\ref{defn:(co)standard},
\begin{equation} \label{ext_lemma_hom_l_nabla_soc}
	\Hom_{\mathcal{C}}(L(\mu),\nabla(\lambda)/\Soc \nabla(\lambda)) \neq 0 \Longrightarrow \mu < \lambda \;.
\end{equation}
If $\mu \nless \lambda$ and $\lambda \nless \mu$, then by \eqref{ext_lemma_hom_l_nabla_soc} and \eqref{ext_lemma_ext_l_nabla} the terms to the left and right of $\Ext_{\mathcal{C}}^1(L(\mu),L(\lambda))$ in \eqref{ext_lemma_seq2} are zero and therefore $\Ext_{\mathcal{C}}^1(L(\mu),L(\lambda)) = 0$. Therefore
\begin{equation}
\Ext_{\mathcal{C}}^1(L(\mu),L(\lambda)) \neq 0 \Longrightarrow \mu < \lambda \text{ or } \lambda < \mu \;.
\end{equation}

Finally, we show that statement (4) holds. If $\Ext_{\mathcal{C}}^1(\nabla(\mu),\nabla(\lambda)) \neq 0$, then the   Lemma~\ref{lem:compfactorext1} below says that $\Ext_{\mathcal{C}}^1(L(\tau),\nabla(\lambda)) \neq 0$ for some composition factor $L(\tau)$ of $\nabla(\mu)$, hence $\mu \geq \tau > \lambda$ by \eqref{ext_lemma_ext_l_nabla}. We conclude that:
\begin{equation}
	\Ext_{\mathcal{C}}^1(\nabla(\mu),\nabla(\lambda)) \neq 0 \Longrightarrow \mu > \lambda \;,
\end{equation}
which is statement (4). This completes the proof of the lemma.
\end{proof}

The following standard fact was used in the proof of Lemma~\ref{ext_lemma}.

\begin{lem}\label{lem:compfactorext1}
	If $M$ and $N$ are objects such that $\Ext_{\mathcal{C}}^1(M,N) \neq 0$, then there is a composition factor $S$ of $M$ such that $\Ext_{\mathcal{C}}^1(S,N) \neq 0$.
\end{lem}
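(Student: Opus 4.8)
The plan is to argue by dévissage, inducting on the composition length $\ell(M)$ of $M$. The base case $\ell(M)=1$ is immediate: then $M$ is simple, hence $M$ is a composition factor of itself, and we may take $S=M$.

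For the inductive step, suppose $\ell(M)>1$. Since $M$ has finite length it contains a simple subobject $M'$; set $M''=M/M'$, so that we have a short exact sequence $0 \to M' \to M \to M'' \to 0$ with $\ell(M'),\ell(M'') < \ell(M)$. Applying the contravariant functor $\Hom_{\mathcal{C}}(-,N)$ and using the connecting $\delta$-morphisms between the $\Ext$-groups in $\mathcal{C}$ — which are available thanks to the identification $\Ext_{\mathcal{C}}^n(X,Y) \cong \Ext_{\Ind\mathcal{C}}^n(X,Y)$ recalled before \Autoref{ass_ext_vanishing}, $\Ind\mathcal{C}$ having enough injectives — we obtain the exact sequence
\[
\Ext_{\mathcal{C}}^1(M'',N) \longrightarrow \Ext_{\mathcal{C}}^1(M,N) \longrightarrow \Ext_{\mathcal{C}}^1(M',N).
\]
Since $\Ext_{\mathcal{C}}^1(M,N)\neq 0$ by hypothesis, exactness forces $\Ext_{\mathcal{C}}^1(M',N)\neq 0$ or $\Ext_{\mathcal{C}}^1(M'',N)\neq 0$. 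Applying the inductive hypothesis to whichever of $M'$, $M''$ has nonzero $\Ext^1$ with $N$ produces a composition factor $S$ of that object with $\Ext_{\mathcal{C}}^1(S,N)\neq 0$; by the Jordan–Hölder theorem the composition factors of $M'$ and $M''$ together are precisely those of $M$, so $S$ is a composition factor of $M$, completing the induction.

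I expect no real obstacle here: the argument is a textbook long-exact-sequence induction. The only point that genuinely needs the framework of this paper — rather than being automatic — is that the long exact $\Ext$-sequence exists \emph{within} $\mathcal{C}$ even though $\mathcal{C}$ need not have enough injectives; this is exactly what the discussion preceding \Autoref{ass_ext_vanishing} provides, via the passage to $\Ind\mathcal{C}$. Everything else is routine.
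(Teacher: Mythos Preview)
Your proof is correct and is essentially the same d\'evissage argument as the paper's: both induct along a composition series using the long exact sequence in $\Ext^1$. The only cosmetic difference is that the paper peels off the top simple quotient $S_n = M/M_{n-1}$ at each step, whereas you peel off a simple subobject $M'$ from the bottom; either direction works equally well.
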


\begin{proof}
	Let $M_0 < M_1 < \ldots < M_n = M$ be a composition series of $M$ and let $S_i \coloneqq M_i/M_{i-1}$. We have a short exact sequence $0 \to M_{i-1} \to M_i \to S_i \to 0$ and this yields an exact sequence $\Ext_{\mathcal{C}}^1(S_i,N) \to \Ext_{\mathcal{C}}^1(M_i,N) \to \Ext_{\mathcal{C}}^1(M_{i-1},N)$. If $\Ext_{\mathcal{C}}^1(S_n,N) \neq 0$ then we are done. If $\Ext_{\mathcal{C}}^1(S_n,N) = 0$, then $\Ext_{\mathcal{C}}^1(M,N) \to \Ext_{\mathcal{C}}^1(M_{n-1},N)$ is injective and therefore $\Ext_{\mathcal{C}}^1(M_{n-1},N) \neq 0$. Inductively it follows that we must have $\Ext_{\mathcal{C}}^1(S_i,N) \neq 0$ for some $i$.
\end{proof}

\subsection{Tilting objects}

We say that an object $N \in \mathcal{C}$ has a \word{costandard filtration} if there is a filtration
\begin{equation} \label{costandard_filtration}
	0 = N_0 \subsetneq N_1 \subsetneq \ldots \subsetneq N_{n-1} \subsetneq N_n = N
\end{equation}
by subobjects $N_i$ of $N$ such that for each $i$ the quotient $N_i/N_{i-1}$ is isomorphic to a costandard object $\nabla(\lambda_i)$, for some $\lambda_i \in \Lambda$. Dually, one can define \word{standard filtrations}. We denote by $\mathcal{C}^\nabla$, respectively $\mathcal{C}^\Delta$, the full subcategory of $\mathcal{C}$ consisting of the objects admitting a costandard, respectively standard, filtration. Note that, since $\Ext_{\mathcal{C}}^1(\Delta(\lambda),\nabla(\mu)) = 0$ by \Autoref{ass_ext_vanishing}, we have
\begin{equation}
\Ext_{\mathcal{C}}^1(\Delta(\lambda),N) = 0 \text{ if } N \in \mathcal{C}^\nabla \quad \text{and} \quad \Ext_{\mathcal{C}}^1(M,\nabla(\mu)) = 0 \text{ if } M \in \mathcal{C}^\Delta \;.
\end{equation}

The following lemma is well-known:

\begin{lem} \hfill

\begin{enumerate}
\item If $N$ has a costandard filtration, then the number of times $\nabla(\lambda)$ occurs as a subquotient of a costandard filtration is independent of the costandard filtration and is given by
\begin{equation}
(N \colon \nabla(\lambda)) \coloneqq \dim_K \Hom_{\mathcal{C}} ( \Delta(\lambda), N ) \;.
\end{equation}
\item If $M$ has a standard filtration, then the number of times $\Delta(\lambda)$ occurs as a subquotient of a standard filtration is independent of the standard filtration and is given by
\begin{equation}
(M \colon \Delta(\lambda)) \coloneqq \dim_K \Hom_{\mathcal{C}} (M,\nabla(\lambda) ) \;.
\end{equation}
\end{enumerate}
\end{lem}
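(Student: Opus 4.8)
The plan is to prove part (1) by induction on the length $n$ of a costandard filtration, with part (2) then following by duality. The two inputs I would rely on are, first, the computation $\Hom_{\mathcal{C}}(\Delta(\lambda),\nabla(\mu)) = \delta_{\lambda\mu}K$ from Assumption~\ref{ass_ext_vanishing}, and second, the vanishing $\Ext^1_{\mathcal{C}}(\Delta(\lambda),N) = 0$ for $N \in \mathcal{C}^\nabla$ observed immediately after the definition of $\mathcal{C}^\nabla$. Note that once one shows that $\dim_K \Hom_{\mathcal{C}}(\Delta(\lambda),N)$ computes the multiplicity of $\nabla(\lambda)$ in \emph{some} costandard filtration of $N$, the independence of the filtration is automatic, since the left-hand side does not refer to a filtration at all; so there is really only the displayed formula to establish.

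So suppose $N$ has a costandard filtration $0 = N_0 \subsetneq N_1 \subsetneq \cdots \subsetneq N_n = N$ with $N_i/N_{i-1} \simeq \nabla(\mu_i)$. For $n = 0,1$ the claim is immediate from Assumption~\ref{ass_ext_vanishing}. For the inductive step I would apply $\Hom_{\mathcal{C}}(\Delta(\lambda),-)$ to the short exact sequence $0 \to N_{n-1} \to N \to \nabla(\mu_n) \to 0$ and use the long exact sequence — valid in $\mathcal{C}$ itself thanks to the connecting $\delta$-morphisms recalled before Assumption~\ref{ass_ext_vanishing}, despite $\mathcal{C}$ possibly lacking enough injectives — to obtain
\[
0 \to \Hom_{\mathcal{C}}(\Delta(\lambda),N_{n-1}) \to \Hom_{\mathcal{C}}(\Delta(\lambda),N) \to \Hom_{\mathcal{C}}(\Delta(\lambda),\nabla(\mu_n)) \to \Ext^1_{\mathcal{C}}(\Delta(\lambda),N_{n-1}).
\]
Since $N_{n-1}$ inherits the costandard filtration $0 = N_0 \subsetneq \cdots \subsetneq N_{n-1}$, it lies in $\mathcal{C}^\nabla$, so the last term vanishes; hence $\dim_K \Hom_{\mathcal{C}}(\Delta(\lambda),N) = \dim_K \Hom_{\mathcal{C}}(\Delta(\lambda),N_{n-1}) + \delta_{\lambda\mu_n}$, and the inductive hypothesis applied to $N_{n-1}$ completes the count.

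Part (2) I would obtain purely formally: interchanging $\Delta \leftrightarrow \nabla$, $\Hd \leftrightarrow \Soc$, and $\Hom_{\mathcal{C}}(X,Y) \leftrightarrow \Hom_{\mathcal{C}}(Y,X)$ turns standard filtrations into costandard ones and leaves Assumption~\ref{ass_ext_vanishing} invariant, so part (1) applies verbatim in the opposite category. I do not anticipate a genuine obstacle here: the argument is a routine d\'evissage, and the only subtle point — that the long exact Ext-sequence one uses is a statement internal to $\mathcal{C}$ rather than to $\Ind\mathcal{C}$ — has already been settled in the paragraph preceding Assumption~\ref{ass_ext_vanishing} via the isomorphism $\Ext^n_{\mathcal{C}}(X,Y) \rightsim \Ext^n_{\Ind\mathcal{C}}(X,Y)$.
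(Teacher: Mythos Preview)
Your proof is correct and follows essentially the same approach as the paper: induction on the length of the costandard filtration, applying $\Hom_{\mathcal{C}}(\Delta(\lambda),-)$ to the short exact sequence $0 \to N_{n-1} \to N \to \nabla(\mu_n) \to 0$, and using $\Ext^1_{\mathcal{C}}(\Delta(\lambda),N_{n-1}) = 0$ to conclude. Your remark that independence of the filtration is automatic once the dimension formula is established, and your explicit justification of the long exact sequence via the $\Ind\mathcal{C}$ discussion, are both nice touches not spelled out in the paper.
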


\begin{proof}
We only prove the first statement, the second is proven dually. The proof proceeds by induction on the length of the costandard filtration. Take a costandard filtration of $N$ as in \eqref{costandard_filtration}. If $n=1$, then $N=\nabla(\lambda_1)$. By \Autoref{ass_ext_vanishing} we have $\dim_K \Hom_{\mathcal{C}}(\Delta(\lambda),\nabla(\lambda_1)) = \delta_{\lambda, \lambda_1}$, hence the claim is true. Now, let $n>1$. Applying $\Hom(\Delta(\lambda),-)$ to the short exact sequence
\[
0 \to N_{n-1} \to N \to \nabla(\lambda_n) \to 0
\]
yields the exact sequence
\begin{align*}
0 & \to \Hom_{\mathcal{C}}(\Delta(\lambda), N_{n-1}) \to \Hom_{\mathcal{C}}(\Delta(\lambda),N) \to \Hom_{\mathcal{C}}(\Delta(\lambda), \nabla(\lambda_n)) \\ & \to \Ext_{\mathcal{C}}^1(\Delta(\lambda),N_{n-1}) \;.
\end{align*}
Since $N_{n-1}$ has a costandard filtration, it follows that $\Ext_{\mathcal{C}}^1(\Delta(\lambda),N_{n-1}) = 0$. Hence
\begin{align*}
	\dim_K \Hom_{\mathcal{C}}(\Delta(\lambda),N) &	= \dim_K \Hom_{\mathcal{C}}(\Delta(\lambda),\nabla(\lambda_n)) + \dim_K \Hom_{\mathcal{C}}(\Delta(\lambda),N_{n-1}) \\
	& = (\nabla(\lambda_n) \colon \nabla(\lambda)) + (N_{n-1} \colon \nabla(\lambda)) = (N \colon \nabla(\lambda)) \;,
\end{align*}
where we have used the induction assumption since $N_{n-1}$ and $\nabla(\lambda_n)$ have a costandard filtration of length $<n$.
\end{proof}

The proof of the following proposition can be found in \cite{AST, AST-Notes}. It uses the $\Ext^2$-vanishing from  \Autoref{ass_ext_vanishing}.

\begin{prop} \label{tilting_ext_vanish}
	The following holds for $M \in \mathcal{C}$:
	\begin{enumerate}
		\item $M \in \mathcal{C}^\Delta$ if and only if $\Ext_{\mathcal{C}}^1(M,\nabla(\lambda)) = 0$ for all $\lambda \in \Lambda$.
		\item $M \in \mathcal{C}^\nabla$ if and only if $\Ext_{\mathcal{C}}^1(\Delta(\lambda),M) = 0$ for all $\lambda \in \Lambda$.
	\end{enumerate}
\end{prop}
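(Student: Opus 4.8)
The plan is the following. Passing to the opposite category $\mathcal{C}^{\mathrm{op}}$ interchanges $\Delta(\lambda)$ and $\nabla(\lambda)$ (the partial order on $\Lambda$ being unchanged), hence interchanges $\mathcal{C}^\Delta$ and $\mathcal{C}^\nabla$, and it again satisfies Assumptions~\ref{ass_lf}--\ref{ass_ext_vanishing} since $\Ext^i_{\mathcal{C}^{\mathrm{op}}}(X,Y) \simeq \Ext^i_{\mathcal{C}}(Y,X)$; thus statement~(2) is statement~(1) applied to $\mathcal{C}^{\mathrm{op}}$, and it suffices to prove~(1). The ``only if'' part of~(1) is the observation already recorded after the definition of $\mathcal{C}^\Delta$: if $M$ has a standard filtration, an induction on its length, using the long exact $\Ext$-sequences available by the discussion preceding Assumption~\ref{ass_ext_vanishing} together with $\Ext^1_{\mathcal{C}}(\Delta(\nu),\nabla(\lambda)) = 0$, yields $\Ext^1_{\mathcal{C}}(M,\nabla(\lambda)) = 0$ for all $\lambda$.

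For the ``if'' part I would induct on the composition length of $M$; the case $M = 0$ is the empty filtration. Assume $M \neq 0$ with $\Ext^1_{\mathcal{C}}(M,\nabla(\lambda)) = 0$ for all $\lambda$. The set $\{\nu \in \Lambda : [M:L(\nu)] \neq 0\}$ is finite and non-empty; fix a maximal element $\mu$, put $m := [M:L(\mu)]$ and $V := \Hom_{\mathcal{C}}(\Delta(\mu),M)$, let $\varepsilon \colon \Delta(\mu) \otimes_K V \to M$ be the evaluation morphism, and set $M' := \Image \varepsilon$. The aim is to prove: \textbf{(a)} $\varepsilon$ is a monomorphism, so $M' \simeq \Delta(\mu)^{\oplus \dim_K V} \in \mathcal{C}^\Delta$; \textbf{(b)} $\dim_K V = m$, hence $[M/M':L(\mu)] = 0$; \textbf{(c)} $\Ext^1_{\mathcal{C}}(M/M',\nabla(\lambda)) = 0$ for all $\lambda$. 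Given these, the inductive hypothesis yields $M/M' \in \mathcal{C}^\Delta$, and lifting a standard filtration of $M/M'$ to subobjects of $M$ containing $M'$ and prepending one of $M' \simeq \Delta(\mu)^{\oplus m}$ gives a standard filtration of $M$.

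For~(a): using $\End_{\mathcal{C}}(\Delta(\mu)) \simeq K$ (Lemma~\ref{ext_lemma}(1)) one identifies $\Hom_{\mathcal{C}}(\Delta(\mu), \Delta(\mu) \otimes_K V) \simeq V$ so that composition with $\varepsilon$ is the identity of $V$, whence $\Hom_{\mathcal{C}}(\Delta(\mu), \ker\varepsilon) = 0$; then the maximality of $\mu$ (so that $\mu$ is maximal among the composition factors of $\ker\varepsilon \subseteq \Delta(\mu)^{\oplus \dim_K V}$) together with the $\Ext^1$-hypothesis is used to force $\ker\varepsilon = 0$. For~(b) the crux is the multiplicity identity $\dim_K \Hom_{\mathcal{C}}(\Delta(\mu),M) = [M:L(\mu)]$ --- the analogue, for an $M$ not yet known to lie in $\mathcal{C}^\Delta$, of the multiplicity formula in the lemma preceding this proposition --- whose proof invokes the maximality of $\mu$, the $\Ext^1$-hypothesis on $M$, and, to close up the relevant dimension count (where a connecting homomorphism could otherwise interfere), the $\Ext^2$-vanishing of Assumption~\ref{ass_ext_vanishing}. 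For~(c): apply $\Hom_{\mathcal{C}}(-,\nabla(\lambda))$ to $0 \to M' \to M \to M/M' \to 0$; since $\Ext^1_{\mathcal{C}}(M,\nabla(\lambda)) = 0$, the group $\Ext^1_{\mathcal{C}}(M/M',\nabla(\lambda))$ is the cokernel of $\Hom_{\mathcal{C}}(M,\nabla(\lambda)) \to \Hom_{\mathcal{C}}(M',\nabla(\lambda)) = \Hom_{\mathcal{C}}(\Delta(\mu),\nabla(\lambda))^{\oplus m}$, which vanishes for $\lambda \neq \mu$; for $\lambda = \mu$ this restriction is injective (its kernel is $\Hom_{\mathcal{C}}(M/M',\nabla(\mu)) = 0$, as $L(\mu)$ is by~(b) not a composition factor of $M/M'$) and is onto once one knows $\dim_K \Hom_{\mathcal{C}}(M,\nabla(\mu)) = m$, proved exactly like the identity in~(b).

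The main obstacle is therefore~(a) together with the two multiplicity identities $\dim_K \Hom_{\mathcal{C}}(\Delta(\mu),M) = \dim_K \Hom_{\mathcal{C}}(M,\nabla(\mu)) = [M:L(\mu)]$ for $\mu$ maximal among the composition factors of $M$: this is the point at which the hypothesis $\Ext^1_{\mathcal{C}}(M,\nabla(\lambda)) = 0$ enters in an essential way rather than formally, and it is where the $\Ext^2$-vanishing of Assumption~\ref{ass_ext_vanishing} is used. The remainder is dévissage and bookkeeping with long exact sequences.
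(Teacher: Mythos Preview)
Your overall approach differs from the paper's: the paper picks $\lambda$ \emph{minimal} with $\Hom_{\mathcal{C}}(M,L(\lambda)) \neq 0$ and produces a \emph{surjection} $M \twoheadrightarrow \Delta(\lambda)$ (by lifting through $\Ext^1_{\mathcal{C}}(M,\Ker\pi)=0$, which follows immediately from minimality), then uses $\Ext^2$-vanishing to show the \emph{kernel} again lies in $\mathcal{X}$. You instead pick $\mu$ maximal and try to embed $\Delta(\mu)^{\oplus m}$ as a \emph{subobject}. This dual-looking strategy is not the formal dual of the paper's argument, and it is precisely here that your sketch has a gap.

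The problem is step~(a). From $\Hom_{\mathcal{C}}(\Delta(\mu),\ker\varepsilon)=0$ and maximality of $\mu$ you only get $[\ker\varepsilon:L(\mu)]=0$, i.e.\ $\ker\varepsilon \subseteq (\Rad\Delta(\mu))^{\oplus m}$; nothing so far excludes $\ker\varepsilon$ from being a nonzero subobject with composition factors strictly below $\mu$. Your phrase ``together with the $\Ext^1$-hypothesis'' does not name a mechanism: the hypothesis $\Ext^1_{\mathcal{C}}(M,\nabla(\lambda))=0$ concerns $M$, and there is no direct transfer to $\ker\varepsilon$. In fact (a) can be established, but only \emph{after} (b) and (c): one first shows (b) holds purely from maximality of $\mu$ via $\Ext^1_{\mathcal{C}}(\Delta(\mu),L(\nu))=0$ for $\nu\le\mu$ (Lemma~\ref{ext_lemma}(2)) --- no $\Ext^1$-hypothesis on $M$ and no $\Ext^2$-vanishing are needed here, contrary to what you write --- and (c) then follows without assuming (a), since $\Hom_{\mathcal{C}}(M',\nabla(\lambda))\hookrightarrow\Hom_{\mathcal{C}}(\Delta(\mu)^{\oplus m},\nabla(\lambda))$ already vanishes for $\lambda\neq\mu$, while for $\lambda=\mu$ one has $\dim_K\Hom_{\mathcal{C}}(M',\nabla(\mu))=[M':L(\mu)]=m$ by (b) applied to $M'$. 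Only now, using induction on $M/M'$ and the $\Ext^2$-vanishing of Assumption~\ref{ass_ext_vanishing} to force $\Ext^1_{\mathcal{C}}(M',\nabla(\lambda))=0$, can one run the long exact sequence for $0\to\ker\varepsilon\to\Delta(\mu)^{\oplus m}\to M'\to 0$ and conclude $\Hom_{\mathcal{C}}(\ker\varepsilon,\nabla(\lambda))=0$ for all $\lambda$, hence $\ker\varepsilon=0$. So your plan is salvageable, but the logical order is (b)$\Rightarrow$(c)$\Rightarrow$induction$\Rightarrow M'\in\mathcal{X}\Rightarrow$(a), and the $\Ext^2$-vanishing enters in passing from $M/M'\in\mathcal{C}^\Delta$ to $M'\in\mathcal{X}$, not in (b). The paper's ``minimal $\lambda$, take a quotient'' approach sidesteps this reordering entirely: minimality makes $\Ext^1_{\mathcal{C}}(M,\Ker\pi)=0$ immediate, and $\Ext^2$-vanishing is used once, cleanly, to show the kernel of $M\twoheadrightarrow\Delta(\lambda)$ stays in $\mathcal{X}$.
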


\begin{proof}
	We only show the first assertion of the proposition, the second is proven similarly.
	\Autoref{ass_ext_vanishing} immediately implies that if $M \in \mathcal{C}^\Delta$, then $\Ext_{\mathcal{C}}^1(M,\nabla(\lambda)) = 0$ for all $\lambda \in \Lambda$. The converse can be proven by exactly the same arguments as in \cite[Proposition 3.5]{AST-Notes} but we will include the details here to show where \Autoref{ass_ext_vanishing} is used. Let $\mathcal{X}$ be the class of all objects $M \in \mathcal{C}$ satisfying $\Ext_{\mathcal{C}}^1(M,\nabla(\lambda)) = 0$ for all $\lambda \in \Lambda$. Let $M \in \mathcal{X}$. Since $M$ has only finitely many composition factors, we can find $\lambda \in \Lambda$ minimal with the property that $\Hom_{\mathcal{C}}(M,L(\lambda)) \neq 0$. Let $\pi:\Delta(\lambda) \twoheadrightarrow L(\lambda)$ be the projection. We have $\mu < \lambda$ for all composition factors $L(\mu)$ of $\Ker \pi$. We claim that $\Ext_{\mathcal{C}}^1(M,\Ker \pi) = 0$. Suppose that $\Ext_{\mathcal{C}}^1(M,\Ker \pi) \neq 0$. Then there must be a composition factor $L(\mu)$ of $\Ker \pi$ such that $\Ext_{\mathcal{C}}^1(M,L(\mu)) \neq 0$. From the exact sequence $0 \rarr L(\mu) \rarr \nabla(\mu) \rarr \nabla(\mu)/L(\mu) \rarr 0$ we obtain the exact sequence
	\[
	\Hom_{\mathcal{C}}(M,\nabla(\mu)/L(\mu)) \rarr \Ext_{\mathcal{C}}^1(M,L(\mu)) \rarr \Ext_{\mathcal{C}}^1(M,\nabla(\mu)) = 0 \;.
	\]
	Since $\Ext_{\mathcal{C}}^1(M,L(\mu)) \neq 0$, we must have $\Hom_{\mathcal{C}}(M,\nabla(\mu)/L(\mu)) \neq 0$, so there is a non-zero morphism $\varphi:M \rarr \nabla(\mu)/L(\mu)$. We can now find a constituent $L(\nu)$ of $\Image \varphi$ and a non-zero morphism $\Image \varphi \rarr L(\nu)$ such that the composition with $\varphi$ yields a non-zero morphism $M \rarr L(\nu)$. Note that $\nu < \mu$. Since $\nu < \mu < \lambda$, this contradicts the minimality of $\lambda$. Hence, we have shown that $\Ext_{\mathcal{C}}^1(M,\Ker \pi) = 0$. If we now choose a non-zero $\varphi \in \Hom_{\mathcal{C}}(M,L(\lambda))$, then there is $\hat{\varphi} \in \Hom_{\mathcal{C}}(M,\Delta(\lambda))$ with $\pi \circ \hat{\varphi} = \varphi$. Since $\pi$ is essential and $\varphi$ is surjective, the map $\hat{\varphi}$ is also surjective. From the exact sequence $0 \rarr \Ker \hat{\varphi} \rarr M \rarr \Delta(\lambda) \rarr 0$ we obtain
	\[
	0 = \Ext_{\mathcal{C}}^1(M,\nabla(\mu)) \leftarrow \Ext_{\mathcal{C}}^1(\Ker \hat{\varphi}, \nabla(\mu)) \leftarrow \Ext_{\mathcal{C}}^2(\Delta(\lambda),\nabla(\mu)) = 0 \;,
	\]
	where the last equality follows from \Autoref{ass_ext_vanishing}, so $\Ker \hat{\varphi} \in \mathcal{X}$. We can now argue by induction on the length of $M \in \mathcal{X}$ that all objects in $\mathcal{X}$ admit a standard filtration. Namely, let $M \in \mathcal{X}$ be of minimal length. Then, since $\Ker \hat{\varphi} \in \mathcal{X}$, we must have $\Ker \hat{\varphi} = 0$, so $M \simeq \Delta(\lambda)$, showing that $M$ admits a standard filtration. If $M$ has arbitrary length, we know that $M/\Ker \hat{\varphi} \simeq \Delta(\lambda)$ and by induction that $\Ker \hat{\varphi}$ admits a standard filtration, so $M$ also admits a standard filtration.
\end{proof}

An object $T \in \mathcal{C}$ admitting both a standard and a costandard filtration is called a \word{tilting} object. We denote by $\mathcal{C}^t \coloneqq \mathcal{C}^\Delta \cap \mathcal{C}^\nabla$ the full subcategory of $\mathcal{C}$ consisting of tilting objects.

\begin{cor} \label{tilting_krull_schmidt}
	The following holds:
	\begin{enumerate}
		\item \label{tilting_krull_schmidt:ext} If $T \in \mathcal{C}$, then $T \in \mathcal{C}^t$ if and only if $\Ext_{\mathcal{C}}^1(T,\nabla(\lambda)) = 0 = \Ext_{\mathcal{C}}^1(\Delta(\lambda),T)$ for all $\lambda \in \Lambda$.
		\item \label{tilting_krull_schmidt:closed} $\mathcal{C}^t$ is closed under direct sums and direct summands in $\mathcal{C}$.
		\item $\mathcal{C}^t$ is a Krull–Schmidt category.
	\end{enumerate}
\end{cor}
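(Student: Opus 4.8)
The plan is to derive all three parts from \Autoref{tilting_ext_vanish}, which already carries the substantive input (via the $\Ext^2$-vanishing of \Autoref{ass_ext_vanishing}); what remains is bookkeeping with additive functors together with an appeal to a standard Krull--Schmidt criterion.

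For part (1) I would simply unwind the definition $\mathcal{C}^t = \mathcal{C}^\Delta \cap \mathcal{C}^\nabla$: an object $T$ lies in $\mathcal{C}^t$ precisely when it lies in both $\mathcal{C}^\Delta$ and $\mathcal{C}^\nabla$, and the two clauses of \Autoref{tilting_ext_vanish} identify these two conditions with $\Ext_{\mathcal{C}}^1(T,\nabla(\lambda)) = 0$ for all $\lambda \in \Lambda$ and with $\Ext_{\mathcal{C}}^1(\Delta(\lambda),T) = 0$ for all $\lambda \in \Lambda$, respectively; conjoining them is exactly the assertion of part (1).

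For part (2) I would feed finite direct sums through the cohomological characterization just obtained. Given $T = T_1 \oplus \cdots \oplus T_r$ in $\mathcal{C}$, additivity of $\Ext_{\mathcal{C}}^1(-,\nabla(\lambda))$ in its first variable and of $\Ext_{\mathcal{C}}^1(\Delta(\lambda),-)$ in its second variable gives
\[
\Ext_{\mathcal{C}}^1(T,\nabla(\lambda)) \cong \bigoplus_{i=1}^{r}\Ext_{\mathcal{C}}^1(T_i,\nabla(\lambda)), \qquad \Ext_{\mathcal{C}}^1(\Delta(\lambda),T) \cong \bigoplus_{i=1}^{r}\Ext_{\mathcal{C}}^1(\Delta(\lambda),T_i)
\]
for every $\lambda \in \Lambda$. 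A finite direct sum of vector spaces is zero if and only if every summand is zero, so part (1) yields: $T \in \mathcal{C}^t$ if and only if every $T_i \in \mathcal{C}^t$. With $r$ arbitrary this is closure under finite direct sums, and with $r = 2$ it is closure under direct summands. Since the zero object carries the empty (co)standard filtration and hence lies in $\mathcal{C}^t$, the subcategory $\mathcal{C}^t$ is in fact a full additive subcategory of $\mathcal{C}$.

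For part (3) I would invoke the well-known fact that a $K$-linear additive category whose morphism spaces are finite-dimensional over the field $K$ and in which every idempotent splits is a Krull--Schmidt category (the endomorphism rings are then finite-dimensional, hence semiperfect, and split idempotents upgrade this to the Krull--Schmidt property). The category $\mathcal{C}^t$ is $K$-linear and additive by part (2); its Hom-spaces are finite-dimensional by \Autoref{ass_lf}; and idempotents split in $\mathcal{C}^t$ because $\mathcal{C}$ is abelian, hence idempotent-complete, and $\mathcal{C}^t$ is closed under direct summands by part (2). Therefore $\mathcal{C}^t$ is Krull--Schmidt. I do not anticipate a genuine obstacle anywhere: all three statements are formal once \Autoref{tilting_ext_vanish} is in hand, and the only point needing a moment's care is checking that the three hypotheses of the Krull--Schmidt criterion are all available for $\mathcal{C}^t$, which is immediate from \Autoref{ass_lf} and part (2).
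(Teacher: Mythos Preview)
Your proposal is correct and follows essentially the same approach as the paper: part (1) is \Autoref{tilting_ext_vanish}, part (2) follows from bi-additivity of $\Ext^1$, and part (3) uses local finiteness together with closure under direct summands to get the Krull--Schmidt property. The only cosmetic difference is that for part (3) the paper first notes that $\mathcal{C}$ itself is Krull--Schmidt (citing \cite[Lemma 5.1, Theorem 5.5]{Krause-KS}) and then deduces the same for $\mathcal{C}^t$ via part (2), whereas you verify the hypotheses of the Krull--Schmidt criterion directly for $\mathcal{C}^t$; these amount to the same argument.
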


\begin{proof}
	The first assertion is just \Autoref{tilting_ext_vanish}. Part \ref{tilting_krull_schmidt:ext} clearly implies part \ref{tilting_krull_schmidt:closed} since $\Ext_\mathcal{C}^1$ is a bi-additive functor. Since $\mathcal{C}$ is locally finite by assumption, it is Krull–Schmidt by \cite[Lemma 5.1, Theorem 5.5]{Krause-KS}. Hence, $\mathcal{C}^t$ is also Krull–Schmidt by part \ref{tilting_krull_schmidt:closed}.
\end{proof}

Since $\mathcal{C}^t$ is Krull–Schmidt, every tilting object is a finite direct sum  of indecomposable tilting objects, the decomposition being unique up to permutation and isomorphism of the summands. Our final assumption on $\mathcal{C}$ is that the indecomposable tilting objects behave as in Ringel's theory \cite{Ringel-Filtrations}.

\begin{ass} \label{ass_tilting}
	We assume:
	\begin{enumerate}
		\item For any $\lambda \in \Lambda$ there is an indecomposable object $T(\lambda) \in \mathcal{C}^t$ such that:
		\begin{enumerate}
			\item \label{ass_tilting:cond1} $T(\lambda)$ is of highest weight $\lambda$, i.e. if $\lbrack T(\lambda) : L(\mu) \rbrack \neq 0$, then $\mu \leq \lambda$, and $\lbrack T(\lambda) : L(\lambda) \rbrack = 1$;
			\item \label{ass_tilting:cond2} there is a monomorphism $\Delta(\lambda) \hookrightarrow T(\lambda)$;
			\item \label{ass_tilting:cond3} there is an epimorphism $T(\lambda) \twoheadrightarrow \nabla(\lambda)$.
		\end{enumerate}
		\item The map $\lambda \mapsto T(\lambda)$ is a bijection between $\Lambda$ and the set of isomorphism classes of indecomposable tilting objects of $\mathcal{C}$.
	\end{enumerate}
\end{ass}

\begin{rem}
Assuming conditions (\ref{ass_tilting:cond2}) and (\ref{ass_tilting:cond3}) in \Autoref{ass_tilting}, condition (\ref{ass_tilting:cond1}) can be replaced by
\begin{equation} \label{ass_tilting_cond1_delta}
	\text{if } (T(\lambda) \colon \Delta(\mu)) \neq 0\text{, then } \mu \leq \lambda\text{, and } (T(\lambda) \colon \Delta(\lambda)) = 1,
\end{equation}
or by
\begin{equation} \label{ass_tilting_cond1_nabla}
	\text{if } (T(\lambda) \colon \nabla(\mu)) \neq 0\text{, then } \mu \leq \lambda\text{, and } (T(\lambda) \colon \nabla(\lambda)) = 1.
\end{equation}
Namely, suppose that (\ref{ass_tilting:cond1}) holds. If $(T(\lambda) \colon \Delta(\mu)) \neq 0$, then clearly $\lbrack T(\lambda) \colon L(\mu) \rbrack \neq 0$, hence, $\mu \leq \lambda$. By assumption, there is a monomorphism $\Delta(\lambda) \hookrightarrow T(\lambda)$, hence $(T(\lambda) \colon \Delta(\lambda)) \neq 0$. If $(T(\lambda) \colon \Delta(\lambda)) > 1$, then also $\lbrack T(\lambda) \colon L(\lambda) \rbrack > 1$, which is a contradiction. Hence, $(T(\lambda) \colon \Delta(\lambda) ) = 1$. Conversely, suppose that \eqref{ass_tilting_cond1_delta} holds. If $\lbrack T(\lambda) \colon L(\mu) \rbrack \neq 0$, there is $\nu \in \Lambda$ such that $(T(\lambda) \colon \Delta(\nu)) \neq 0$ and $\lbrack \Delta(\nu) \colon L(\mu) \rbrack \neq 0$, hence $\mu \leq \nu \leq \lambda$. Since $(T(\lambda) \colon \Delta(\lambda)) \neq 0$, we have $\lbrack T(\lambda) \colon L(\lambda) \rbrack \neq 0$. Any occurrence of $L(\lambda)$ as a composition factor of $T(\lambda)$ must be in $\Delta(\lambda)$ by the properties of standard objects, and since $L(\lambda)$ occurs precisely once in $\Delta(\lambda)$ and $\Delta(\lambda)$ occurs precisely once in $T(\lambda)$, we conclude that $\lbrack T(\lambda) \colon L(\lambda) \rbrack = 1$. Similarly, one shows that (\ref{ass_tilting:cond1}) is equivalent to \eqref{ass_tilting_cond1_nabla}.
\end{rem}

\begin{prop} \label{tilting_standard_homs}
	The following holds:
	\begin{enumerate}
		\item If $\Hom_\mathcal{C}(\Delta(\lambda),T(\mu)) \neq 0$, then $\lambda \leq \mu$. Moreover, $\Hom_\mathcal{C}(\Delta(\lambda),T(\lambda)) \simeq K$, so the embedding $\Delta(\lambda) \hookrightarrow T(\lambda)$ is unique up to scalars.
		\item \label{tilting_standard_homs:nabla} If $\Hom_\mathcal{C}(T(\mu),\nabla(\lambda)) \neq 0$, then $\lambda \leq \mu$. Moreover, $\Hom_\mathcal{C}(T(\lambda),\nabla(\lambda)) \simeq K$, so the projection $T(\lambda) \twoheadrightarrow \nabla(\lambda)$ is unique up to scalars.
		\item The composition $\Delta(\lambda) \hookrightarrow T(\lambda) \twoheadrightarrow \nabla(\lambda)$ is non-zero.
	\end{enumerate}
\end{prop}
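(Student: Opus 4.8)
The plan is to reduce parts (1) and (2) to the multiplicity formulas for objects admitting (co)standard filtrations, and to settle part (3) by a direct composition-factor count.

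For part (1), I would first observe that $T(\mu)\in\mathcal{C}^t=\mathcal{C}^\Delta\cap\mathcal{C}^\nabla$, in particular $T(\mu)\in\mathcal{C}^\nabla$, so the lemma on multiplicities in (co)standard filtrations proved just above gives $\dim_K\Hom_{\mathcal{C}}(\Delta(\lambda),T(\mu))=(T(\mu)\colon\nabla(\lambda))$. By the remark following \Autoref{ass_tilting}, condition~(\ref{ass_tilting:cond1}) is equivalent to \eqref{ass_tilting_cond1_nabla}, which says precisely that $(T(\mu)\colon\nabla(\nu))\neq 0$ forces $\nu\leq\mu$ and that $(T(\mu)\colon\nabla(\mu))=1$. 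Hence $\Hom_{\mathcal{C}}(\Delta(\lambda),T(\mu))\neq 0$ implies $\lambda\leq\mu$, while $\dim_K\Hom_{\mathcal{C}}(\Delta(\lambda),T(\lambda))=(T(\lambda)\colon\nabla(\lambda))=1$, so $\Hom_{\mathcal{C}}(\Delta(\lambda),T(\lambda))\simeq K$. For the uniqueness clause I would note that in a one-dimensional Hom-space every nonzero morphism is a nonzero scalar multiple of the embedding $\Delta(\lambda)\hookrightarrow T(\lambda)$ supplied by~(\ref{ass_tilting:cond2}), and that a nonzero scalar multiple of a monomorphism is again a monomorphism (its kernel is unchanged in a $K$-linear category). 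Part (2) I would treat entirely dually: since $T(\mu)\in\mathcal{C}^\Delta$, one gets $\dim_K\Hom_{\mathcal{C}}(T(\mu),\nabla(\lambda))=(T(\mu)\colon\Delta(\lambda))$, and the equivalence of~(\ref{ass_tilting:cond1}) with \eqref{ass_tilting_cond1_delta} gives the vanishing for $\lambda\not\leq\mu$ and one-dimensionality for $\lambda=\mu$; the projection $T(\lambda)\twoheadrightarrow\nabla(\lambda)$ of~(\ref{ass_tilting:cond3}) is then unique up to scalars.

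For part (3), I would argue by contradiction: writing $\iota\colon\Delta(\lambda)\hookrightarrow T(\lambda)$ and $\pi\colon T(\lambda)\twoheadrightarrow\nabla(\lambda)$, suppose $\pi\circ\iota=0$. Then $\Image \iota\simeq\Delta(\lambda)$ is a subobject of $\Ker \pi$ by the universal property of the kernel. Applying additivity of composition multiplicities to $0\to\Ker \pi\to T(\lambda)\to\nabla(\lambda)\to 0$ yields $[\Ker \pi\colon L(\lambda)]=[T(\lambda)\colon L(\lambda)]-[\nabla(\lambda)\colon L(\lambda)]=1-1=0$, using that $T(\lambda)$ is of highest weight $\lambda$ by~(\ref{ass_tilting:cond1}) and that $\nabla(\lambda)$ is of highest weight $\lambda$. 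On the other hand $[\Ker \pi\colon L(\lambda)]\geq[\Image \iota\colon L(\lambda)]=[\Delta(\lambda)\colon L(\lambda)]=1$, a contradiction; hence $\pi\circ\iota\neq 0$.

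I do not expect a genuine obstacle here: the substance of the proposition is already contained in the preceding results, so the only thing to get right is the bookkeeping---invoking the filtration-multiplicity lemma, the identification $\mathcal{C}^t=\mathcal{C}^\Delta\cap\mathcal{C}^\nabla$, and the reformulation of the highest-weight condition~(\ref{ass_tilting:cond1}) as \eqref{ass_tilting_cond1_delta}/\eqref{ass_tilting_cond1_nabla}---rather than finding a new idea.
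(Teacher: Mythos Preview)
Your argument is correct. For part~(3) it is essentially identical to the paper's: both show that $\pi\circ i=0$ would force $[T(\lambda):L(\lambda)]\geq 2$, contradicting condition~(\ref{ass_tilting:cond1}).

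For parts~(1) and~(2) you take a genuinely different route from the paper. The paper argues directly: for the inequality in~(1), a nonzero $\varphi\colon\Delta(\lambda)\to T(\mu)$ has $\Ker\varphi\subseteq\Rad\Delta(\lambda)$, so $L(\lambda)$ survives in $\Image\varphi$ and hence $[T(\mu):L(\lambda)]\neq 0$, giving $\lambda\leq\mu$; for the one-dimensionality, the paper applies $\Hom_{\mathcal{C}}(\Delta(\lambda),-)$ to $0\to\Delta(\lambda)\to T(\lambda)\to T(\lambda)/\Delta(\lambda)\to 0$, uses $\Ext^1_{\mathcal{C}}(\Delta(\lambda),\Delta(\lambda))=0$ from \Autoref{ext_lemma}, and checks that $\Hom_{\mathcal{C}}(\Delta(\lambda),T(\lambda)/\Delta(\lambda))=0$ to conclude $\Hom_{\mathcal{C}}(\Delta(\lambda),T(\lambda))\simeq\End_{\mathcal{C}}(\Delta(\lambda))\simeq K$. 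Your approach instead reads both assertions straight off the filtration-multiplicity lemma together with the reformulation~\eqref{ass_tilting_cond1_nabla} (respectively~\eqref{ass_tilting_cond1_delta}) of condition~(\ref{ass_tilting:cond1}). This is shorter and arguably more transparent, since it packages the whole statement as a single dimension count; the paper's argument, by contrast, is more self-contained and makes visible exactly which structural features of $\Delta(\lambda)$ and $T(\lambda)$ are being used.
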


\begin{proof}
	Let $\varphi:\Delta(\lambda) \rarr T(\mu)$ be a non-zero morphism. This induces an isomorphism $\Delta(\lambda)/\Ker \varphi \simeq \Image \varphi$. Since $\varphi$ is non-zero, we have $\Ker \varphi \subsetneq \Delta(\lambda)$, so $\Ker \varphi \subs \Rad \Delta(\lambda)$, implying that $\lbrack \Delta(\lambda)/\Ker \varphi : L(\lambda) \rbrack \neq 0$. Hence, $\lbrack T(\mu):L(\lambda) \rbrack \neq 0$, so $\lambda \leq \mu$. Choose an embedding $i:\Delta(\lambda) \hookrightarrow T(\lambda)$ and let $q:T(\lambda) \twoheadrightarrow T(\lambda)/\Delta(\lambda)$ be the quotient map, where we identify $\Delta(\lambda)$ with $\Image i$. We get an exact sequence
	\[
	\begin{tikzcd}[column sep=small]
	0 \arrow{r} & \Delta(\lambda) \arrow{r}{i} & T(\lambda) \arrow{r}{q} & T(\lambda)/\Delta(\lambda) \arrow{r} & 0 \;.
	\end{tikzcd}
	\]
	Applying $\Hom_{\mathcal{C}}(\Delta(\lambda),-)$ yields an exact sequence
	\begin{align*}
	0 \to & \Hom_{\mathcal{C}}(\Delta(\lambda),\Delta(\lambda)) \to \Hom_{\mathcal{C}}(\Delta(\lambda),T(\lambda)) \to \Hom_{\mathcal{C}}(\Delta(\lambda),T(\lambda)/\Delta(\lambda))  \\
	\to & \Ext_{\mathcal{C}}^1(\Delta(\lambda),\Delta(\lambda)) = 0 \;,
	\end{align*}
	where we have used \Autoref{ext_lemma}. Hence, $\Hom_{\mathcal{C}}(\Delta(\lambda),T(\lambda)/\Delta(\lambda))$ is the image of $\Hom_{\mathcal{C}}(\Delta(\lambda),T(\lambda))$ under composition with $q$. But it is clear that this image is equal to zero, so the above sequence yields
	\[
	\Hom_{\mathcal{C}}(\Delta(\lambda),T(\lambda)) \simeq \Hom_{\mathcal{C}}(\Delta(\lambda),\Delta(\lambda)) \simeq K \;,
	\]
	where we have used \Autoref{ext_lemma}. Part (\ref{tilting_standard_homs:nabla}) is proven similarly. Finally, suppose that $i:\Delta(\lambda) \hookrightarrow T(\lambda)$ is an embedding and $\pi:T(\lambda) \twoheadrightarrow \nabla(\lambda)$ is a projection such that the composition is zero. Then $\Image i \subs \Ker \pi$, so we get a non-zero epimorphism $T(\lambda)/\Delta(\lambda) \twoheadrightarrow \nabla(\lambda)$. Since $L(\lambda)$ is a constituent of $\Delta(\lambda)$ and of $\nabla(\lambda)$, we deduce that $\lbrack T(\lambda):L(\lambda) \rbrack \geq 2$. This is not possible, so $\pi \circ i \neq 0$.
\end{proof}

\subsection{Standard categories and examples}

\begin{defn} \label{defn_std_duality}
A \word{standard category} is a category satisfying Assumptions \ref{ass_lf}, \ref{ass_simples}, \ref{ass_ext_vanishing}, and \ref{ass_tilting}.
\end{defn}

We mention two general, and two specific, examples of standard categories.

\begin{ex} \label{finite_hwc_has_everything}
Let $\mathcal{C}$ be a highest weight category over a field $K$, in the sense of Cline--Parshall--Scott \cite{CPS}. Assume moreover that:
\begin{enumerate}
	\item all objects of $\mathcal{C}$ are of finite length;
	\item $\mathcal{C}$ has only \emph{finitely} many simple objects up to isomorphism;
	\item $\mathcal{C}$ is \word{split}, i.e. $\End_{\mathcal{C}}(L) \simeq K$  for any simple object $L$ of $\mathcal{C}$.
\end{enumerate}
Then $\mathcal{C}$ is a standard category. First, it follows from \cite[Theorem 3.6]{CPS} that $\mathcal{C}$ is equivalent to the category of finite-dimensional modules over a finite-dimensional $K$-algebra. This implies, in particular, that $\mathcal{C}$ is essentially small and locally finite, so \Autoref{ass_lf} holds. By definition of highest weight category, \Autoref{ass_simples} holds and there are costandard objects. In \cite[\S1A]{CPS-Duality-in-highest-weight-89} it is shown that $\mathcal{C}^{\mathrm{op}}$ is a highest weight category as well, hence $\mathcal{C}$ also has standard objects; note that what in \cite{CPS-Duality-in-highest-weight-89} is called an ``artinian'' category is what we call a locally finite category. In the proof of \cite[Theorem 3.11]{CPS} it is shown that the Ext-vanishing from \Autoref{ass_ext_vanishing} holds (this relies on the assumption that $\mathcal{C}$ is split). Finally, it is a result by Ringel \cite[Proposition 2]{Ringel-Filtrations} that tilting objects, as in \Autoref{ass_tilting}, exist.
\end{ex}

\begin{ex} \label{lfhw_has_everything}
Let $\mathcal{C}$ be a lower finite highest weight category in the sense of Brundan and Stroppel \cite[Definition 3.50]{BrundanStroppel}. Then $\mathcal{C}$ is a standard category. The Ext-vanishing in \Autoref{ass_ext_vanishing} follows from \cite[Theorem~3.56]{BrundanStroppel} and the existence of tilting objects, \Autoref{ass_tilting}, follows from \cite[Theorem 4.2]{BrundanStroppel}.
\end{ex}


\begin{ex}
The prototypical example of a standard category is the Bernstein--Gelfand--Gelfand category $\mathcal{O}$ of a complex finite-dimensional semisimple Lie algebra. Proofs of all the assumptions making $\mathcal{O}$ a standard category can be found in \cite{HumphreysBGG}.
\end{ex}

\begin{ex} \label{uq_mod_examples}
Let $\Phi$ be a root system with Cartan matrix $A$ and let $q$ be a non-zero element of a field $K$. If $q$ is a root of unity, we assume that $q$ is a primitive root of unity of odd order $l$; and if $\Phi$ has an irreducible component of type $G_2$, we moreover assume that $l$ is prime to $3$. Let $U_q$ be the quantum group associated to~$A$ and specialized in $q$ (by this we mean the specialization of Lusztig's integral form \cite{Lusztig}, which involves divided power generators). Let $\cat{mod}{U_q}$ be the category of type-1 finite-dimensional $U_q$-modules; see \cite{APW}. Then $\cat{mod}{U_q}$ is a standard category with indexing set $\Lambda$ being the set of dominant integral weights of $\Phi$. This follows from the detailed discussion in \cite{AST,AST-Notes}. We note that $\cat{mod}{U_q}$ does not necessarily have enough injectives—and is therefore not a highest weight category—if $K$ is of positive characteristic.
\end{ex}

\section{Generalizing the Andersen--Stroppel--Tubbenhauer construction} \label{cellularity_section}

In this section we will describe how to generalize the work of Andersen, Stroppel, and Tubbenhauer \cite{AST} from tilting modules of quantum groups, as in \Autoref{uq_mod_examples}, to tilting objects in an arbitrary standard category $\mathcal{C}$. As noted in the introduction, the key limitation in \cite{AST} is the reliance on weight spaces of modules. We replace this with a categorical filtration on the Hom-spaces, see  \eqref{hom_filtration}. Once we have established the ``basis theorem'' (\Autoref{cellular_newsection_basis_independent}), the proof of the main Theorems \ref{end_tilting_is_cellular} and \ref{cellularity_thm_precise} is exactly as in \cite{AST}. But for the proof of \Autoref{cellular_newsection_basis_independent} we need to argue differently. Since this concerns several technicalities from \cite{AST}, we have to repeat some of the constructions and arguments from \cite{AST}. In \Autoref{subsec_duality} we discuss dualities and the problem of when they actually induce an involution on the endomorphism algebra of tilting objects—this led us to the notion of ``standard dualities''. \\

Throughout, $\mathcal{C}$ denotes a standard category over a field $K$.

\subsection{Basic construction} \label{sec_basic_construction}

Let us choose for any $\lambda \in \Lambda$ a non-zero morphism
\begin{equation} \label{c_lambda_def}
c^\lambda: \Delta(\lambda) \rarr \nabla(\lambda)\;.
\end{equation}
By \Autoref{ass_ext_vanishing}, this is unique up to scalar. By \Autoref{tilting_standard_homs} we can choose an embedding
\begin{equation}
  i^\lambda:\Delta(\lambda) \hookrightarrow T(\lambda)
\end{equation}
and a  projection
\begin{equation} \label{pi_map}
  \pi^\lambda:T(\lambda) \twoheadrightarrow \nabla(\lambda)
\end{equation}
 so that we get a factorization
\begin{equation} \label{c_map}
c^\lambda = \pi^\lambda \circ i^\lambda \;.
\end{equation}

\begin{lem} \label{tilting_lift_lemma}
	Let $\lambda \in \Lambda$. The following holds:
	\begin{enumerate}
		\item Let $M \in \mathcal{C}^\Delta$. Then any morphism $f:M \rarr \nabla(\lambda)$ factors through a morphism $\hat{f}:M \rarr T(\lambda)$, i.e., there is a commutative diagram
		\[
		\begin{tikzcd}
		M \arrow{r}{\hat{f}} \arrow{dr}[swap]{f}  & T(\lambda)\arrow[twoheadrightarrow]{d}{\pi^\lambda} \\
		& \nabla(\lambda) \; .
		\end{tikzcd}
		\]
		\item Let $N \in \mathcal{C}^\nabla$. Then any morphism $g:\Delta(\lambda) \rarr N$ extends to a morphism $\hat{g}:T(\lambda) \rarr N$, i.e., there is a commutative diagram
		\[
		\begin{tikzcd}
		T(\lambda) \arrow{r}{\hat{g}} & N \\
		\Delta(\lambda) \arrow[hookrightarrow]{u}{i^\lambda} \arrow{ur}[swap]{g} \; .
		\end{tikzcd}
		\]
	\end{enumerate}
\end{lem}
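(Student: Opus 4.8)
The plan is to prove (1) and deduce (2) by duality, so I focus on (1). The strategy is the standard "lifting against an $\Ext^1$-vanishing" argument. Suppose $M \in \mathcal{C}^\Delta$ and $f \colon M \to \nabla(\lambda)$ is given. The projection $\pi^\lambda \colon T(\lambda) \twoheadrightarrow \nabla(\lambda)$ is an epimorphism, so let $K^\lambda \coloneqq \Ker \pi^\lambda$, giving a short exact sequence
\[
0 \to K^\lambda \to T(\lambda) \xrightarrow{\pi^\lambda} \nabla(\lambda) \to 0 \;.
\]
Applying $\Hom_{\mathcal{C}}(M,-)$ produces the connecting sequence
\[
\Hom_{\mathcal{C}}(M, T(\lambda)) \xrightarrow{(\pi^\lambda)_*} \Hom_{\mathcal{C}}(M, \nabla(\lambda)) \to \Ext^1_{\mathcal{C}}(M, K^\lambda) \;,
\]
which is exact (here I use that $\mathcal{C}$ embeds in $\Ind\mathcal{C}$, which has enough injectives, so that the long exact sequence and connecting morphisms exist — as discussed before \Autoref{ass_ext_vanishing}). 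Thus it suffices to show $\Ext^1_{\mathcal{C}}(M, K^\lambda) = 0$, for then $f$ lifts to some $\hat f \in \Hom_{\mathcal{C}}(M,T(\lambda))$ with $\pi^\lambda \circ \hat f = f$.

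To get the $\Ext^1$-vanishing I would argue that $K^\lambda$ has a costandard filtration, i.e.\ $K^\lambda \in \mathcal{C}^\nabla$; since $M \in \mathcal{C}^\Delta$, the displayed formula right after \eqref{costandard_filtration} gives $\Ext^1_{\mathcal{C}}(M, K^\lambda) = 0$. To see $K^\lambda \in \mathcal{C}^\nabla$: by \Autoref{tilting_krull_schmidt} (or directly \Autoref{tilting_ext_vanish}(2)), it is enough to check $\Ext^1_{\mathcal{C}}(\Delta(\nu), K^\lambda) = 0$ for all $\nu \in \Lambda$. From the same short exact sequence, applying $\Hom_{\mathcal{C}}(\Delta(\nu),-)$ gives the exact piece
\[
\Hom_{\mathcal{C}}(\Delta(\nu), \nabla(\lambda)) \to \Ext^1_{\mathcal{C}}(\Delta(\nu), K^\lambda) \to \Ext^1_{\mathcal{C}}(\Delta(\nu), T(\lambda)) \;,
\]
where the right-hand term vanishes because $T(\lambda) \in \mathcal{C}^t \subseteq \mathcal{C}^\nabla$ and $\Ext^1_{\mathcal{C}}(\Delta(\nu),-)$ kills $\mathcal{C}^\nabla$. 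So $\Ext^1_{\mathcal{C}}(\Delta(\nu), K^\lambda)$ is a quotient of $\Hom_{\mathcal{C}}(\Delta(\nu), \nabla(\lambda))$; the latter is $0$ unless $\nu = \lambda$, and when $\nu = \lambda$ I need the connecting map $\Hom_{\mathcal{C}}(\Delta(\lambda),\nabla(\lambda)) \to \Ext^1_{\mathcal{C}}(\Delta(\lambda), K^\lambda)$ to be zero. But $c^\lambda = \pi^\lambda \circ i^\lambda$ factors through $\pi^\lambda$ by construction \eqref{c_map}, so $c^\lambda$ lies in the image of $(\pi^\lambda)_* \colon \Hom_{\mathcal{C}}(\Delta(\lambda), T(\lambda)) \to \Hom_{\mathcal{C}}(\Delta(\lambda),\nabla(\lambda))$; since $\Hom_{\mathcal{C}}(\Delta(\lambda),\nabla(\lambda)) \simeq K\cdot c^\lambda$ by \Autoref{ass_ext_vanishing}, that map is surjective, hence the connecting map out of it is zero. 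Therefore $\Ext^1_{\mathcal{C}}(\Delta(\lambda), K^\lambda) = 0$, completing the verification that $K^\lambda \in \mathcal{C}^\nabla$.

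The main obstacle is precisely this last point — one must not merely know that $\nabla(\lambda)$ has a costandard filtration and $T(\lambda)$ has a costandard filtration, but control the single "new" $\Ext$-class that could obstruct $K^\lambda$ from being in $\mathcal{C}^\nabla$, and this is where the specific choice of $\pi^\lambda$ factoring $c^\lambda$ (and the fact that $\Hom_{\mathcal{C}}(\Delta(\lambda),\nabla(\lambda))$ is one-dimensional spanned by $c^\lambda$) is used. Everything else is a routine diagram chase. Finally, statement (2) follows by applying (1) in the opposite category $\mathcal{C}^{\op}$: the roles of $\Delta$ and $\nabla$, of monomorphisms and epimorphisms, and of $i^\lambda$ and $\pi^\lambda$ are interchanged, and $\mathcal{C}^{\op}$ is again a standard category with the same data, so (1) for $\mathcal{C}^{\op}$ is exactly (2) for $\mathcal{C}$. $\qed$
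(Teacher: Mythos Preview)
Your argument is correct and follows the same overall strategy as the paper: form the short exact sequence $0 \to K^\lambda \to T(\lambda) \xrightarrow{\pi^\lambda} \nabla(\lambda) \to 0$, apply $\Hom_{\mathcal{C}}(M,-)$, and reduce to showing $\Ext^1_{\mathcal{C}}(M,K^\lambda)=0$ via $K^\lambda \in \mathcal{C}^\nabla$. The one difference is in how $K^\lambda \in \mathcal{C}^\nabla$ is established. The paper simply asserts that $\nabla(\lambda)$ occurs at the top of a costandard filtration of $T(\lambda)$, so $K^\lambda$ inherits a costandard filtration; this is shorter but implicitly uses that costandard filtrations can be reordered (via \Autoref{ext_lemma}(4), since all other factors $\nabla(\mu)$ have $\mu<\lambda$) together with the uniqueness of $\pi^\lambda$ up to scalar (\Autoref{tilting_standard_homs}(\ref{tilting_standard_homs:nabla})) to identify the top quotient map with $\pi^\lambda$. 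You instead verify the Ext-criterion of \Autoref{tilting_ext_vanish}(2) directly, handling the case $\nu=\lambda$ via the factorization $c^\lambda = \pi^\lambda \circ i^\lambda$. Your route is a bit longer but makes every step explicit and avoids the unspoken reordering argument; what you flag as ``the main obstacle'' is in fact bypassed entirely in the paper's formulation.
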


\begin{proof}
	We consider the first statement, the second is dual. Since $\nabla(\lambda)$ occurs at the top of a costandard filtration of $T(\lambda)$, it is clear that $\Ker \pi^\lambda$ has a costandard filtration and so it follows from \Autoref{tilting_ext_vanish} that $\Ext_\mathcal{C}^1(M, \Ker \pi^\lambda) = 0$ since $M$ has a standard filtration. Applying $\Hom_\mathcal{C}(M,-)$ to the exact sequence $0 \rarr \Ker \pi^\lambda \rarr T(\lambda) \overset{\pi^\lambda}{\rarr}\nabla(\lambda) \rarr 0$ proves the claim.
\end{proof}

In the following we let $M \in \mathcal{C}^\Delta$ and $N \in \mathcal{C}^\nabla$. For a $K$-basis $F_M^\lambda$ of $\Hom_\mathcal{C}(M,\nabla(\lambda))$ and a choice of lift $\hat{f}$ (as in  \Autoref{tilting_lift_lemma}) for each $f \in F_M^\lambda$ we set
\begin{equation}
\hat{F}_M^\lambda \coloneqq \lbrace \hat{f} \mid f \in F_M^\lambda \rbrace \subs \Hom_\mathcal{C}(M,T(\lambda)) \;.
\end{equation}
Similarly, for a $K$-basis $G_N^\lambda$ of $\Hom_\mathcal{C}(\Delta(\lambda),N)$  and a choice of lifts $\hat{g}$ we set
\begin{equation}
\hat{G}_N^\lambda \coloneqq \lbrace \hat{g} \mid g \in G_N^\lambda \rbrace \subs \Hom_\mathcal{C}(T(\lambda),N)\;.
\end{equation}
Note that the lifts, and thus the subsets $\hat{F}_M^\lambda$ and $\hat{G}_N^\lambda$, are not unique. For any such choice, we define the subset
\begin{equation}
\hat{G}_N^\lambda \hat{F}_M^\lambda \coloneqq \lbrace \hat{g} \circ \hat{f} \mid \hat{g} \in \hat{G}_N^\lambda, \hat{f} \in \hat{F}_M^\lambda \rbrace \subs \Hom_\mathcal{C}(M,N) \;.
\end{equation}
The elements of this subset can be illustrated by the commutative diagram
\begin{equation}
\begin{tikzcd}
& \Delta(\lambda) \arrow[hookrightarrow]{d}[swap]{i^\lambda} \arrow{dr}{g} \\
M \arrow{r}{\hat{f}} \arrow{dr}[swap]{f} & T(\lambda) \arrow{r}[swap]{\hat{g}} \arrow[twoheadrightarrow]{d}{\pi^\lambda} & N \\
& \nabla(\lambda)
\end{tikzcd}
\end{equation}
Let
\begin{equation}
\hat{G}_N \hat{F}_M \coloneqq \bigcup_{\lambda \in \Lambda} \hat{G}_N^\lambda \hat{F}_M^\lambda \;.
\end{equation}
It will follow from Proposition \ref{cellular_newsection_ingredients} that the above union is disjoint. We define indexing sets
\begin{equation}
\mathcal{I}_N^\lambda \coloneqq \{ 1,\ldots, \lbrack N:\nabla(\lambda) \rbrack \} \quad \text{and} \quad \mathcal{J}_M^\lambda \coloneqq \{ 1,\ldots, \lbrack M:\Delta(\lambda) \rbrack \} \;,
\end{equation}
and write
\begin{equation}
F_M^\lambda = \{ f_j^\lambda \mid j \in \mathcal{J}_M^\lambda \} \quad \text{and} \quad G_N^\lambda = \{ g_i^\lambda \mid i \in \mathcal{I}_N^\lambda \} \;.
\end{equation}
Hence, setting
\begin{equation}
	c_{ij}^\lambda \coloneqq \hat{g}_i^\lambda \circ \hat{f}_j^\lambda
\end{equation}
we have
\begin{equation}
\hat{G}_N  \hat{F}_M = \{ c_{ij}^\lambda \mid \lambda \in \Lambda, i \in \mathcal{I}_M^\lambda, j \in \mathcal{J}_N^\lambda \} \;.
\end{equation}

\subsection{The basis theorem}

Our aim, following \cite{AST}, is to prove the following basis theorem; see also \cite[Theorem~4.43]{BrundanStroppel}.

\begin{thm} \label{cellular_newsection_basis_independent}
	For any choice of lifts $\hat{f}$ and $\hat{g}$, the set $\hat{G}_N \hat{F}_M$ is a $K$-vector space basis of $\Hom_{\mathcal{C}}(M,N)$.
\end{thm}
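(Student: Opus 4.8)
The plan is to prove that $\hat{G}_N\hat{F}_M$ spans $\Hom_{\mathcal C}(M,N)$ and is linearly independent, with a dimension count making these two facts equivalent. First I would record the dimension formula: since $M\in\mathcal C^\Delta$ and $N\in\mathcal C^\nabla$, one expects
\[
\dim_K\Hom_{\mathcal C}(M,N)=\sum_{\lambda\in\Lambda}(M:\Delta(\lambda))\,(N:\nabla(\lambda))=\sum_{\lambda\in\Lambda}\lvert\mathcal J_M^\lambda\rvert\cdot\lvert\mathcal I_N^\lambda\rvert,
\]
which is exactly the cardinality of the putative basis (once the union over $\lambda$ is known to be disjoint). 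This formula itself I would prove by a double induction on the lengths of the standard filtration of $M$ and the costandard filtration of $N$, using the $\Ext^1$-vanishing of \Autoref{ass_ext_vanishing} to split the relevant $\Hom$ long exact sequences; the base case is $\dim\Hom(\Delta(\lambda),\nabla(\mu))=\delta_{\lambda\mu}$. So it suffices to prove \emph{either} spanning \emph{or} independence.

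I would prove spanning. Let $h\colon M\to N$ be arbitrary. Filter $N$ by $0=N_0\subsetneq N_1\subsetneq\cdots\subsetneq N_n=N$ with $N_k/N_{k-1}\cong\nabla(\lambda_k)$, choosing the filtration so that the $\lambda_k$ are ordered with maximal ones first (possible since $\Ext^1(\Delta,\nabla)=0$ lets one reorder). Project $h$ to $N/N_{n-1}\cong\nabla(\lambda_n)$; the composite $M\to\nabla(\lambda_n)$ lies in $\Hom(M,\nabla(\lambda_n))$, so it is a $K$-combination $\sum_j a_j f_j^{\lambda_n}$ of basis elements, each of which lifts through $\pi^{\lambda_n}$ to $\hat f_j^{\lambda_n}\colon M\to T(\lambda_n)$ by \Autoref{tilting_lift_lemma}(1). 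Meanwhile the inclusion $N_{n-1}\hookrightarrow N$ composed with nothing — rather, I would use \Autoref{tilting_lift_lemma}(2) to extend suitable maps $\Delta(\lambda_n)\to N$ to $T(\lambda_n)\to N$, giving elements $\hat g_i^{\lambda_n}$; a correct bookkeeping shows that subtracting an appropriate combination $\sum_{i,j}b_{ij}\,\hat g_i^{\lambda_n}\circ\hat f_j^{\lambda_n}$ from $h$ kills the component in $N/N_{n-1}$, i.e. produces a map $M\to N_{n-1}$. One then induces on $n$, noting $N_{n-1}\in\mathcal C^\nabla$, to express $h-\sum b_{ij}c_{ij}^{\lambda_n}$ as a combination of the $c^\mu_{i'j'}$ with $\mu$ among $\lambda_1,\dots,\lambda_{n-1}$. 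The only subtlety is that the $c_{ij}^\lambda$ built from a filtration of $N_{n-1}$ should agree, up to the span, with those built from the chosen bases of $\Hom(\Delta(\lambda),N)$ — this is where one invokes that lifts are unique up to elements factoring through $\Ker\pi^\lambda$ (resp. vanishing on $\Delta(\lambda)$), whose contributions land in lower filtration layers.

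The main obstacle, and the place the argument needs genuine care, is the interplay between the two choices of lift and the filtration: a priori $\hat g_i^\lambda\circ\hat f_j^\lambda$ depends on both lifts, and one must show the \emph{span} of $\hat G_N\hat F_M$ is independent of these choices and that the triangular structure with respect to the filtration of $N$ (and dually of $M$) is genuinely triangular. Concretely, if $\hat f'$ is another lift of $f$ then $\hat f-\hat f'$ factors through $\Ker\pi^\lambda$, which has a costandard filtration with quotients $\nabla(\mu)$, $\mu<\lambda$; composing with any $\hat g_i^\lambda$ then contributes only to terms indexed by such smaller $\mu$, hence does not affect the leading term. Formalizing "leading term" requires the ordering on the filtration of $N$ to refine $\leq$ on $\Lambda$, which \Autoref{ext_lemma}(4) guarantees is achievable. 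Once this triangularity is set up cleanly, linear independence follows immediately: a vanishing combination $\sum_{\lambda,i,j}a^\lambda_{ij}c^\lambda_{ij}=0$, read modulo the filtration step for a maximal $\lambda$ occurring, forces the corresponding $\sum_j a^\lambda_{ij}f_j^\lambda=0$ in $\Hom(M,\nabla(\lambda))$ for each $i$, hence all those $a^\lambda_{ij}=0$, and one descends. Combined with the dimension count this yields the theorem; the disjointness of $\bigcup_\lambda\hat G_N^\lambda\hat F_M^\lambda$ (promised in the text via \Autoref{cellular_newsection_ingredients}) drops out of the same triangularity, since elements attached to distinct $\lambda$ have distinct leading filtration behaviour.
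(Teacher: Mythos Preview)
Your overall strategy (dimension count plus spanning) is sound in principle, and the dimension formula is correct. But both the spanning and the independence arguments, as written, have a genuine gap at precisely the point you flag as ``the main obstacle''.

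For independence: projecting to a single quotient $N\twoheadrightarrow\nabla(\lambda_0)$ does not separate the index $i$. The composite $\pi\circ\hat g_i^{\lambda_0}\colon T(\lambda_0)\to\nabla(\lambda_0)$ equals $s_i\pi^{\lambda_0}$ for some scalar $s_i$ (since $\Hom(T(\lambda_0),\nabla(\lambda_0))=K$), so projecting $\sum_{i,j}a_{ij}^{\lambda_0}c_{ij}^{\lambda_0}$ only yields $\sum_i a_{ij}^{\lambda_0}s_i=0$ for each $j$, not $a_{ij}^{\lambda_0}=0$. To kill all coefficients you would need as many independent projections as there are indices $i$, and for an \emph{arbitrary} basis $G_N^{\lambda_0}$ of $\Hom(\Delta(\lambda_0),N)$ there is no reason the resulting constraints are independent. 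For spanning: after reducing to $h'\colon M\to N_{n-1}$ and applying the induction hypothesis, you obtain $h'$ in the span of $\hat G_{N_{n-1}}\hat F_M$, built from \emph{some other} bases and lifts. Converting these to the given $\hat G_N\hat F_M$ produces error terms of the form $(\text{diff})\circ\hat f_j^\mu$ where $\text{diff}\colon T(\mu)\to N$ vanishes on $\Delta(\mu)$; you assert these ``land in lower filtration layers'', but lower with respect to \emph{what}? They are not visibly combinations of $c_{ij}^\nu$ with $\nu<\mu$, so the recursion does not terminate without a further invariant.

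The paper resolves this by introducing the categorical filtration $\Hom_{\mathcal C}(M,N)^{\le\lambda}$ via $\varphi_\lambda:=[\Image\varphi:L(\lambda)]$, which is exactly the missing invariant: the error terms above satisfy $\varphi_\mu=0$ because they factor through $T(\mu)/\Delta(\mu)$. The paper then proves the basis statement in two steps: first for a \emph{specific} choice of $G_N^\lambda$ adapted to a costandard filtration of $N$ (\Autoref{cellular_newsection_ingredients}), establishing the crucial fact that any nonzero element of $\langle\hat G_N^\lambda\hat F_M^\lambda\rangle_K$ has $\varphi_\lambda\neq0$; then a unitriangular change-of-basis argument (with respect to this filtration) bootstraps to arbitrary bases and lifts. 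Your sketch is circling this idea but never pins down the filtration that makes ``leading term'' and ``lower layers'' precise.
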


The proof of the theorem, which is \cite[Theorem 3.1]{AST}, needs some preparation and some modifications to make it work in our general setting. The key point is the independence of the choice of bases and lifts. This is done in \cite{AST} by using a filtration of the Hom-spaces given by restrictions of morphisms to weight spaces. We will give an alternative proof of these facts, not relying on the notion of weight spaces. For $\lambda \in \Lambda$ and $\varphi \in \Hom_{\mathcal{C}}(M,N)$, we define
\begin{equation} \label{cellular_newsection_key_modification}
\varphi_\lambda \coloneqq \lbrack \Image \varphi:L(\lambda) \rbrack  \in \bbN \;.
\end{equation}
Then,
\begin{equation} \label{hom_filtration}
\Hom_{\mathcal{C}}(M,N)^{\leq \lambda} \coloneqq \lbrace \varphi \in \Hom_\mathcal{C}(M,N) \mid \varphi_\mu = 0 \tn{ unless } \mu \leq \lambda \rbrace \;.
\end{equation}
Analogously, we define $\Hom_{\mathcal{C}}(M,N)^{< \lambda}$.

\begin{lem} \label{hom_weight_space_is_space}
	The set $\Hom_{\mc{C}}(M,N)^{\leq \lambda}$ is a vector subspace of $\Hom_{\mc{C}}(M,N)$.
\end{lem}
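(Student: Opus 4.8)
The plan is to show that $\Hom_{\mathcal{C}}(M,N)^{\leq \lambda}$ is closed under scalar multiplication and addition. Scalar multiplication is immediate: for any nonzero scalar $\alpha$, the morphisms $\varphi$ and $\alpha\varphi$ have the same image, so $(\alpha\varphi)_\mu = \varphi_\mu$ for all $\mu$; and $0$ lies in the set vacuously. The real content is additivity.

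For additivity, suppose $\varphi, \psi \in \Hom_{\mathcal{C}}(M,N)^{\leq\lambda}$ and consider $\varphi + \psi$. The key observation is that $\Image(\varphi+\psi)$ is a subquotient of $\Image\varphi \oplus \Image\psi$. Indeed, $\varphi+\psi$ factors as $M \xrightarrow{(\varphi,\psi)} \Image\varphi \oplus \Image\psi \xrightarrow{\mathrm{sum}} N$, so $\Image(\varphi+\psi)$ is a quotient of the image of the first map, which in turn is a subobject of $\Image\varphi \oplus \Image\psi$. Hence, by the Jordan--H\"older theorem,
\[
[\Image(\varphi+\psi) : L(\mu)] \leq [\Image\varphi \oplus \Image\psi : L(\mu)] = [\Image\varphi : L(\mu)] + [\Image\psi : L(\mu)] = \varphi_\mu + \psi_\mu
\]
for every $\mu \in \Lambda$. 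If $\mu \nleq \lambda$, the right-hand side is zero, so $(\varphi+\psi)_\mu = 0$. Therefore $\varphi + \psi \in \Hom_{\mathcal{C}}(M,N)^{\leq\lambda}$, which completes the proof.

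I do not expect any serious obstacle here; the only point requiring a little care is the factorization of $\varphi+\psi$ through $\Image\varphi\oplus\Image\psi$ and the fact that composition-multiplicities behave monotonically under passing to subobjects and quotients, but both are standard consequences of working in a locally finite abelian category (and the latter is exactly the Jordan--H\"older invariance recalled in \Autoref{ass_lf}). The identical argument, applied with the strict inequality, shows that $\Hom_{\mathcal{C}}(M,N)^{<\lambda}$ is a subspace as well.
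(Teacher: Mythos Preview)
Your proof is correct and takes essentially the same approach as the paper: both argue that $(\varphi+\psi)_\mu \leq \varphi_\mu + \psi_\mu$ by comparing $\Image(\varphi+\psi)$ with (a subquotient of) $\Image\varphi \oplus \Image\psi$, and then conclude. The only cosmetic difference is that the paper phrases this via the internal sum $\Image\varphi + \Image\psi \subseteq N$ rather than your explicit factorization through the external direct sum, and it omits the (trivial) scalar-multiplication check that you spell out.
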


\begin{proof}
	Let $\phi,\psi \in \Hom_{\mc{C}}(M,N)^{\leq \lambda}$. Since $\Image \phi, \Image \psi, \Image(\phi+\psi) \subs \Image \phi+ \Image \psi$, we have
	\[
	(\phi+ \psi)_\mu = \lbrack \Image(\phi+\psi):L(\mu) \rbrack \leq \lbrack \Image \phi : L(\mu) \rbrack + \lbrack \Image \psi : L(\mu) \rbrack = \phi_\mu + \psi_\mu \;.
	\]
	If $\mu \not\leq \lambda$, then $\phi_\mu = 0 = \psi_\mu$ since $\phi,\psi \in \Hom_\mc{C}(M,N)^{\leq \lambda}$. Hence $(\phi+\psi)_\mu = 0$, implying that $\phi+\psi \in \Hom_{\mc{C}}(M,N)^{\leq \lambda}$.
\end{proof}

\begin{lem} \label{f_plus_g_weight}
	If $f,g \in \Hom_\mc{C}(M,N)$ and $f_\lambda = 0$, then $(f+g)_\lambda = g_\lambda$.
\end{lem}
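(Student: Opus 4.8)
The plan is to exploit the sub-additivity inequality for the numbers $\varphi_\mu = [\Image \varphi : L(\mu)]$ that was already established in the proof of \Autoref{hom_weight_space_is_space}, namely that $(\phi+\psi)_\mu \leq \phi_\mu + \psi_\mu$ for all $\mu$, which follows from the containment $\Image(\phi+\psi) \subseteq \Image \phi + \Image \psi$ and the additivity of composition multiplicities on short exact sequences. The key observation I would use alongside this is that negation does not change the image: $\Image(-f) = \Image f$, and hence $(-f)_\lambda = f_\lambda$ for every $\lambda$.

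First I would apply the inequality to $\phi = f$, $\psi = g$ to get
\[
(f+g)_\lambda \leq f_\lambda + g_\lambda = g_\lambda \;,
\]
using the hypothesis $f_\lambda = 0$. For the reverse inequality I would write $g = (f+g) + (-f)$ and apply the same inequality to the pair $\phi = f+g$, $\psi = -f$:
\[
g_\lambda \leq (f+g)_\lambda + (-f)_\lambda = (f+g)_\lambda + f_\lambda = (f+g)_\lambda \;,
\]
again using $f_\lambda = 0$ together with $(-f)_\lambda = f_\lambda$. Combining the two inequalities gives $(f+g)_\lambda = g_\lambda$, as desired.

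There is essentially no obstacle here: the statement is a formal consequence of the sub-additivity of $\varphi \mapsto \varphi_\lambda$ together with the trivial observation that passing to $-f$ leaves the image (and hence all its composition multiplicities) unchanged. The only point worth spelling out is that the inequality $(\phi+\psi)_\mu \leq \phi_\mu + \psi_\mu$ holds for \emph{all} $\mu$ without any hypothesis, so it may be applied freely to the pair $(f+g, -f)$ and not merely to $(f,g)$.
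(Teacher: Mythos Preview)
Your proof is correct and takes a genuinely different route from the paper's. The paper argues by passing to the quotient $N/\Image f$: writing $\overline{h}$ for the composite of $h$ with the projection $N \twoheadrightarrow N/\Image f$, one has $\overline{f+g} = \overline{g}$ (since $\overline{f} = 0$), and the hypothesis $[\Image f : L(\lambda)] = 0$ ensures that passing to this quotient does not change the $L(\lambda)$-multiplicity of the image of any morphism, so $(f+g)_\lambda = (\overline{f+g})_\lambda = (\overline{g})_\lambda = g_\lambda$. Your argument instead squeezes the equality between two applications of the sub-additivity inequality already proved in \Autoref{hom_weight_space_is_space}, using the observation $\Image(-f) = \Image f$ to handle the reverse direction. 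Your approach has the virtue of recycling the previous lemma verbatim and avoiding any new construction; the paper's quotient argument is perhaps conceptually cleaner in that it explains \emph{why} the equality holds (the ambiguity between $f+g$ and $g$ lives entirely in $\Image f$, which carries no $L(\lambda)$), but both are one-line arguments and equally valid.
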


\begin{proof}
	Let $\ol{f+g}:M \rarr N/\Image(f)$ and $\ol{g}:M \rarr N/\Image(f)$ be the map induced by $f+g$ and $g$, respectively. Clearly, $\ol{f+g}=\ol{g}$. Since $0=f_\lambda = \lbrack \Image(f):L(\lambda) \rbrack$, we have $(f+g)_\lambda = (\ol{f+g})_\lambda = \ol{g}_\lambda = g_\lambda$.
\end{proof}

The following proposition contains the key ingredients for the proof of  \Autoref{cellular_newsection_basis_independent}. The first three statements are counterparts of \cite[Proposition 3.3]{AST}, \cite[Lemma 3.4]{AST}, and \cite[Lemma 3.5]{AST}. \Autoref{cellular_newsection_basis_independent} follows almost immediately from the third statement in the proposition. The third statement in turn relies crucially on the second statement, and this is a sufficient version of \cite[Lemma 3.4]{AST} that we can prove in our generalized setting. Since our $\varphi_\lambda$ is quite different from \cite{AST}, we will repeat some of the arguments in \cite{AST} to prove that our generalization indeed works.

\begin{prop} \label{cellular_newsection_ingredients}
For each $\lambda \in \Lambda$, choose arbitrary $F_M^\lambda$ and $\hat{F}_M^\lambda$. Then there is a choice of $G_N^\lambda$ and $\hat{G}_N^\lambda$, for all $\lambda \in \Lambda$, such that:
	\begin{enumerate}
		\item \label{cellular_newsection_ingredients:basis} $\hat{G}_N\hat{F}_M$ is a $K$-basis of $\Hom_\mc{C}(M,N)$.
		\item \label{cellular_newsection_ingredients:lemma45} If $\phi$ is a non-zero element of the $K$-span $\langle \hat{G}_N^\lambda \hat{F}_M^\lambda \rangle_K$ of $\hat{G}_N^\lambda \hat{F}_M^\lambda$, then $\phi_\lambda$ is non-zero.
	\end{enumerate}
	Moreover, any such choice satisfies the following property:
	\begin{enumerate}
		\setcounter{enumi}{2}
		\item \label{cellular_newsection_ingredients:filtered_basis}The set $(\hat{G}_N\hat{F}_M)^{\leq \lambda} \coloneqq \bigcup_{\mu \leq \lambda} \hat{G}_N^\mu \hat{F}_M^\mu$ is a $K$-basis of $\Hom_\mc{C}(M,N)^{\leq \lambda}$. Similarly, $(\hat{G}_N\hat{F}_M)^{< \lambda} $ is a basis of $\Hom_\mc{C}(M,N)^{< \lambda}$.
	\end{enumerate}
  We furthermore conclude:
  \begin{enumerate}
		\setcounter{enumi}{3}
    \item \label{cellular_newsection_ingredients:disjoint} The sets $\hat{G}_N^\lambda\hat{F}_M^\lambda$ for the various $\lambda \in \Lambda$ are pairwise disjoint.
  \end{enumerate}
\end{prop}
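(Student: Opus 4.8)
The plan is to establish parts~(\ref{cellular_newsection_ingredients:basis}) and~(\ref{cellular_newsection_ingredients:lemma45}) \emph{simultaneously} by induction on the length of a costandard filtration of $N$, and then to derive parts~(\ref{cellular_newsection_ingredients:filtered_basis}) and~(\ref{cellular_newsection_ingredients:disjoint}) formally from these, using Lemmas~\ref{hom_weight_space_is_space} and~\ref{f_plus_g_weight}. The observation that is used throughout is that $\hat{G}_N^\lambda\hat{F}_M^\lambda\subseteq\Hom_{\mathcal{C}}(M,N)^{\leq\lambda}$: for $\hat{f}\colon M\to T(\lambda)$ and $\hat{g}\colon T(\lambda)\to N$ the image $\Image(\hat{g}\circ\hat{f})$ is a quotient of the subobject $\Image\hat{f}\subseteq T(\lambda)$, so every composition factor $L(\mu)$ of it has $[T(\lambda):L(\mu)]\neq0$ and hence $\mu\leq\lambda$ by the highest-weight property of $T(\lambda)$ in \Autoref{ass_tilting}. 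For the base case $N\cong\nabla(\nu)$ one has $\Hom_{\mathcal{C}}(\Delta(\lambda),\nabla(\nu))=\delta_{\lambda\nu}K$, so I would take $G_N^\nu=\{c^\nu\}$ with lift $\pi^\nu$ (legitimate since $\pi^\nu\circ i^\nu=c^\nu$) and $G_N^\lambda=\emptyset$ for $\lambda\neq\nu$; then $\hat{G}_N^\nu\hat{F}_M^\nu=\{\pi^\nu\circ\hat{f}\mid\hat{f}\in\hat{F}_M^\nu\}=F_M^\nu$ and the remaining $\hat{G}_N^\lambda\hat{F}_M^\lambda$ are empty, so $\hat{G}_N\hat{F}_M=F_M^\nu$ is a basis of $\Hom_{\mathcal{C}}(M,\nabla(\nu))$, proving~(\ref{cellular_newsection_ingredients:basis}); and~(\ref{cellular_newsection_ingredients:lemma45}) holds because a nonzero subobject of $\nabla(\nu)$ contains $\Soc\nabla(\nu)\cong L(\nu)$.

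For the inductive step I would fix a short exact sequence $0\to N'\xrightarrow{\iota}N\xrightarrow{q}\nabla(\nu)\to0$ with $N'\in\mathcal{C}^\nabla$ of strictly shorter costandard filtration length, and keep the $G_{N'}^\lambda,\hat{G}_{N'}^\lambda$ provided by the inductive hypothesis. Applying $\Hom_{\mathcal{C}}(\Delta(\lambda),-)$ to the sequence and using $\Hom_{\mathcal{C}}(\Delta(\lambda),\nabla(\nu))=\delta_{\lambda\nu}K$ together with $\Ext^1_{\mathcal{C}}(\Delta(\nu),N')=0$ (\Autoref{tilting_ext_vanish}, as $N'\in\mathcal{C}^\nabla$) shows $\Hom_{\mathcal{C}}(\Delta(\lambda),N)\cong\Hom_{\mathcal{C}}(\Delta(\lambda),N')$ for $\lambda\neq\nu$ and that $\Hom_{\mathcal{C}}(\Delta(\nu),N)\to\Hom_{\mathcal{C}}(\Delta(\nu),\nabla(\nu))$ is surjective with kernel $\Hom_{\mathcal{C}}(\Delta(\nu),N')$. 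Accordingly I would set $G_N^\lambda\coloneqq\iota\circ G_{N'}^\lambda$ for $\lambda\neq\nu$ and $G_N^\nu\coloneqq(\iota\circ G_{N'}^\nu)\cup\{g_\star\}$, where $g_\star\colon\Delta(\nu)\to N$ is any morphism with $q\circ g_\star=c^\nu$ (which exists by the surjectivity just noted), taking $\iota\circ\hat{g}$ as lift in the first case and a lift $\hat{g}_\star\colon T(\nu)\to N$ of $g_\star$ from \Autoref{tilting_lift_lemma} in the second. The crucial computation is that $q\circ\hat{g}_\star=\pi^\nu$: since $q\circ\hat{g}_\star\circ i^\nu=q\circ g_\star=c^\nu=\pi^\nu\circ i^\nu$ and $\dim_K\Hom_{\mathcal{C}}(T(\nu),\nabla(\nu))=1$ by \Autoref{tilting_standard_homs}, we get $q\circ\hat{g}_\star=\kappa\pi^\nu$ with $\kappa c^\nu=c^\nu$, so $\kappa=1$. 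Hence $\hat{G}_N\hat{F}_M=\bigl(\iota\circ(\hat{G}_{N'}\hat{F}_M)\bigr)\cup\{\hat{g}_\star\circ\hat{f}_j^\nu\mid j\in\mathcal{J}_M^\nu\}$; applying $\Hom_{\mathcal{C}}(M,-)$ to the sequence (with $\Ext^1_{\mathcal{C}}(M,N')=0$ since $M\in\mathcal{C}^\Delta$ and $N'\in\mathcal{C}^\nabla$) yields a short exact sequence $0\to\Hom_{\mathcal{C}}(M,N')\to\Hom_{\mathcal{C}}(M,N)\to\Hom_{\mathcal{C}}(M,\nabla(\nu))\to0$ whose maps are composition with $\iota$ and with $q$; here $\iota\circ(\hat{G}_{N'}\hat{F}_M)$ is the image of the basis $\hat{G}_{N'}\hat{F}_M$ under the injective first map, while $q\circ(\hat{g}_\star\circ\hat{f}_j^\nu)=\pi^\nu\circ\hat{f}_j^\nu=f_j^\nu$ shows that the second map sends $\{\hat{g}_\star\circ\hat{f}_j^\nu\mid j\in\mathcal{J}_M^\nu\}$ onto the basis $F_M^\nu$, so $\hat{G}_N\hat{F}_M$ is a basis and~(\ref{cellular_newsection_ingredients:basis}) follows. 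For~(\ref{cellular_newsection_ingredients:lemma45}), given $0\neq\phi\in\langle\hat{G}_N^\lambda\hat{F}_M^\lambda\rangle_K$: if $\lambda\neq\nu$ then $\phi=\iota\circ\phi'$ with $0\neq\phi'\in\langle\hat{G}_{N'}^\lambda\hat{F}_M^\lambda\rangle_K$, and $\iota$ monic gives $\phi_\lambda=\phi'_\lambda\neq0$; if $\lambda=\nu$, write $\phi=\iota\circ\phi'+\hat{g}_\star\circ\bigl(\sum_j b_j\hat{f}_j^\nu\bigr)$, argue as before if all $b_j=0$, and otherwise note that $q\circ\phi=\sum_j b_jf_j^\nu\neq0$, so $q(\Image\phi)=\Image(q\circ\phi)$ is a nonzero subobject of $\nabla(\nu)$ and contains $L(\nu)$, giving $\phi_\nu=[\Image\phi:L(\nu)]\geq[q(\Image\phi):L(\nu)]\geq1$.

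To derive~(\ref{cellular_newsection_ingredients:filtered_basis}) for an arbitrary choice satisfying~(\ref{cellular_newsection_ingredients:basis}) and~(\ref{cellular_newsection_ingredients:lemma45}), note that $(\hat{G}_N\hat{F}_M)^{\leq\lambda}\subseteq\Hom_{\mathcal{C}}(M,N)^{\leq\lambda}$ by the observation above and is linearly independent as a subset of the basis $\hat{G}_N\hat{F}_M$, so only spanning remains. Given $\varphi\in\Hom_{\mathcal{C}}(M,N)^{\leq\lambda}$, expand $\varphi=\sum_\mu\phi'_\mu$ with $\phi'_\mu\in\langle\hat{G}_N^\mu\hat{F}_M^\mu\rangle_K$ (finitely many nonzero), set $S=\{\mu\mid\phi'_\mu\neq0\}$, and let $\mu$ be maximal in $S$. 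For $\mu'\in S\setminus\{\mu\}$ one has $\mu\not\leq\mu'$, hence $(\phi'_{\mu'})_\mu=0$ since $\phi'_{\mu'}\in\Hom_{\mathcal{C}}(M,N)^{\leq\mu'}$; subadditivity of $(-)_\mu$ (\Autoref{hom_weight_space_is_space}) then gives $(\varphi-\phi'_\mu)_\mu=0$, and \Autoref{f_plus_g_weight} yields $\varphi_\mu=(\phi'_\mu)_\mu$, which is nonzero by~(\ref{cellular_newsection_ingredients:lemma45}). Since $\varphi\in\Hom_{\mathcal{C}}(M,N)^{\leq\lambda}$, this forces $\mu\leq\lambda$, and as every element of the finite set $S$ lies below a maximal one, $S\subseteq\{\mu\mid\mu\leq\lambda\}$ and $\varphi$ lies in the span of $(\hat{G}_N\hat{F}_M)^{\leq\lambda}$. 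The same argument with $<$ in place of $\leq$ handles $\Hom_{\mathcal{C}}(M,N)^{<\lambda}$. Finally, for~(\ref{cellular_newsection_ingredients:disjoint}): a common element $c$ of $\hat{G}_N^\mu\hat{F}_M^\mu$ and $\hat{G}_N^{\mu'}\hat{F}_M^{\mu'}$ is nonzero (it lies in the basis), so $c_\mu\neq0\neq c_{\mu'}$ by~(\ref{cellular_newsection_ingredients:lemma45}), while $c\in\Hom_{\mathcal{C}}(M,N)^{\leq\mu}\cap\Hom_{\mathcal{C}}(M,N)^{\leq\mu'}$ forces $\mu\leq\mu'$ and $\mu'\leq\mu$, so $\mu=\mu'$; moreover comparing $|\hat{G}_N\hat{F}_M|=\dim_K\Hom_{\mathcal{C}}(M,N)=\sum_\lambda(N:\nabla(\lambda))(M:\Delta(\lambda))$ (the dimension formula, by the same induction) with $\sum_\lambda|G_N^\lambda|\cdot|F_M^\lambda|$ forces every intermediate inequality to be an equality, so the $c_{ij}^\lambda$ are distinct for fixed $\lambda$ as well.

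I expect the main obstacle to be part~(\ref{cellular_newsection_ingredients:lemma45}): in \cite{AST} the corresponding ``leading term'' statement relies on truncating morphisms to a weight space, which has no analogue in a general standard category. Its role is played here by the filtration $\Hom_{\mathcal{C}}(M,N)^{\leq\lambda}$ together with a choice of the maps $\hat{g}$ compatible with a costandard filtration of $N$, and the rigidity $\dim_K\Hom_{\mathcal{C}}(T(\nu),\nabla(\nu))=1$---in the guise of the identity $q\circ\hat{g}_\star=\pi^\nu$---is exactly what makes the leading term of a linear combination visible after pushing forward along $q$. Once~(\ref{cellular_newsection_ingredients:basis}) and~(\ref{cellular_newsection_ingredients:lemma45}) are secured, parts~(\ref{cellular_newsection_ingredients:filtered_basis}) and~(\ref{cellular_newsection_ingredients:disjoint}) are purely formal.
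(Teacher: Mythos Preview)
Your proposal is correct and follows essentially the same approach as the paper: induction on the costandard-filtration length of $N$ for parts~(\ref{cellular_newsection_ingredients:basis}) and~(\ref{cellular_newsection_ingredients:lemma45}), then a formal derivation of~(\ref{cellular_newsection_ingredients:filtered_basis}) and~(\ref{cellular_newsection_ingredients:disjoint}) using the filtration $\Hom_{\mathcal{C}}(M,N)^{\leq\lambda}$ and \Autoref{f_plus_g_weight}. Your explicit verification that $q\circ\hat{g}_\star=\pi^\nu$ via $\dim_K\Hom_{\mathcal{C}}(T(\nu),\nabla(\nu))=1$ is a nice clarification---the paper argues the corresponding point (that $\pi\circ\phi'\neq0$ when $\phi'\neq0$) more implicitly through the short exact sequence~\eqref{cellularity_hom_exact_sequence}---but the underlying logic is the same.
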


Note that Theorem \ref{cellular_newsection_basis_independent} is a strictly stronger statement than Proposition~\ref{cellular_newsection_ingredients}(\ref{cellular_newsection_ingredients:basis}) because the latter only claims that there is a choice of $G_N^\lambda$ and $\hat{G}_N^\lambda$ giving the desired basis, whereas the theorem says this is true for any choice.

\begin{proof}
	We will show (\ref{cellular_newsection_ingredients:basis}) and (\ref{cellular_newsection_ingredients:lemma45}) by induction on the length of a costandard filtration of $N$.

	First, assume that $N =\nabla(\lambda)$ for some $\lambda \in \Lambda$. Recall that $c^\lambda \colon \Delta(\lambda) \to \nabla(\lambda)$ from \eqref{c_map} is a basis element of $\Hom_{\mathcal{C}}(\Delta(\lambda),\nabla(\lambda))$. Let $g^\lambda \coloneqq c^\lambda$ and $G_N^\lambda \coloneqq \{ g^\lambda \}$. By construction, $\hat{g}^\lambda \coloneqq \pi^\lambda \colon T(\lambda) \to \nabla(\lambda)$ from \eqref{pi_map} is a lift of $g^\lambda$. We thus set $\hat{G}_N^\lambda \coloneqq \{ \hat{g}^\lambda \}$. By assumption, $F_M^\lambda$ is a basis of $\Hom_{\mathcal{C}}(M,\nabla(\lambda))$. For $f^\lambda \in F_M^\lambda$ with chosen lift $\hat{f}^\lambda \in \hat{F}_M^\lambda$ we have $\hat{g}^\lambda \circ \hat{f}^\lambda = \pi^\lambda \circ \hat{f}^\lambda = f^\lambda$ by the property of the lift. It follows that $\hat{G}_N^\lambda \hat{F}_M^\lambda$ is a basis of $\Hom_{\mathcal{C}}(M,N)$, proving  (\ref{cellular_newsection_ingredients:basis}). To prove (\ref{cellular_newsection_ingredients:lemma45}), we argue that $\hat{G}_N^\mu \hat{F}_M^\mu = \emptyset$ if $\mu \neq \lambda$. By definition, an element $\phi$ of $\hat{G}_N^\mu \hat{F}_M^\mu$  factorizes as $\phi = \hat{g} \hat{f}$ for some $\hat{f} \in \hat{F}_M^\mu$ and $\hat{g} \in \hat{G}_N^\mu$. Moreover, $\hat{g}$ is a lift of a morphism $g \in \Hom_\mc{C}(\Delta(\mu),\nabla(\lambda))$. By \Autoref{ass_ext_vanishing} we have $\Hom_\mc{C}(\Delta(\mu),\nabla(\lambda)) = 0$ whenever $\mu \neq \lambda$. Hence, if $\mu \neq \lambda$, then $G_N^\mu = \emptyset$, thus $\hat{G}_N^\mu = \emptyset$ and $\hat{G}_N^\mu \hat{F}_M^\mu = \emptyset$. Consequently, if $\phi \in \langle \hat{G}_N^\mu \hat{F}_M^\mu \rangle_K$ is non-zero, we must have $\mu = \lambda$. In this case, $\phi$ is a non-zero morphism $M \rarr \nabla(\lambda)=N$. The image is a non-zero submodule of $\nabla(\lambda)$, thus contains $\Soc \nabla(\lambda) = L(\lambda)$, hence $\phi_\lambda \neq 0$. This concludes the proof of (\ref{cellular_newsection_ingredients:basis}) and (\ref{cellular_newsection_ingredients:lemma45}) for $N=\nabla(\lambda)$.

	Now, let $N$ be arbitrary with costandard filtration $0 = N_0 \subset N_1 \subset \dots \subset N_{k-1} \subset N_k = N$. Let $\lambda \in \Lambda$ with $N_k/N_{k-1} \simeq \nabla(\lambda)$. Applying $\Hom_\mc{C}(M,-)$ to the exact sequence
	\begin{equation} \label{cellularity_hom_exact_sequence_2}
	0 \rarr N_{k-1} \rarr N \overset{\pi}{\rarr} \nabla(\lambda) \rarr 0
	\end{equation}
	yields the exact sequence
	\begin{equation} \label{cellularity_hom_exact_sequence}
	0 \rarr \Hom_\mc{C}(M,N_{k-1}) \overset{\mrm{inc}}{\rarr} \Hom_\mc{C}(M,N) \rarr \Hom_\mc{C}(M,\nabla(\lambda)) \rarr 0 \;,
	\end{equation}
	the exactness on the right hand side following from the fact that $M \in \mc{C}^\Delta$, $N_{k-1} \in \mc{C}^\nabla$, and \Autoref{ass_ext_vanishing}. By the induction assumption, we can choose a lift $\hat{G}_{N_{k-1}}$ such that the basis $\hat{G}_{N_{k-1}} \hat{F}_{M}$ of $\Hom_\mc{C}(M,N_{k-1})$ satisfies both (\ref{cellular_newsection_ingredients:basis}) and (\ref{cellular_newsection_ingredients:lemma45}). We can argue as in the second half of the proof of \cite[Proposition 3.3]{AST} that we can choose a suitable lift $\hat{G}_N$ such that the basis $\hat{G}_N\hat{F}_M = \bigcup_{\mu \in \Lambda} \hat{G}_N^\mu \hat{F}_M^\mu$ of $\Hom_\mc{C}(M,N)$ satisfies (\ref{cellular_newsection_ingredients:lemma45}). The choice of  $\hat{G}_N^\mu$ will depend on whether $\mu \neq \lambda$ or $\mu = \lambda$.

	If $\mu \neq \lambda$, we set $G_N^\mu \coloneqq \mrm{inc}\left(G_{N_{k-1}}^\mu\right)$ and $\hat{G}_N^\mu \coloneqq \mrm{inc}\left (\hat{G}_{N_{k-1}}^\mu\right)$. If $\mu = \lambda$, then $\mrm{inc}\left(G_{N_{k-1}}^\lambda\right)$ is still linearly independent but since $\lbrack N:\nabla(\lambda) \rbrack = \lbrack N_{k-1}:\nabla(\lambda) \rbrack + 1$, we have $\dim_K \Hom_\mc{C}(\Delta(\lambda),N) = \dim_K \Hom_\mc{C}(\Delta(\lambda),N_{k-1})+1$, so we need one more basis element. Applying $\Hom_{\mathcal{C}}(\Delta(\lambda),-)$ to the exact sequence \eqref{cellularity_hom_exact_sequence_2} yields an exact sequence
	\begin{equation}
		0 \rarr \Hom_{\mathcal{C}}(\Delta(\lambda),N_{k-1}) \rarr \Hom_{\mathcal{C}}(\Delta(\lambda),N) \overset{\pi \circ }{\rarr} \Hom_{\mathcal{C}}(\Delta(\lambda),\nabla(\lambda)) \rarr 0 \;,
	\end{equation}
	where the zero term on the right is due to \Autoref{ass_ext_vanishing} and the fact that $N$ has a costandard filtration. Hence, there is $g^\lambda:\Delta(\lambda) \rarr N$ such that $\pi \circ g^\lambda = c^\lambda$, where $c^\lambda$ is as in (\ref{c_lambda_def}). Then the set $\mrm{inc}(G_{N_{k-1}}^\lambda) \cup \lbrace g^\lambda \rbrace$ is a basis of $\Hom_\mc{C}(\Delta(\lambda),N)$. Let $\hat{g}^\lambda:T(\lambda) \rarr N$ be any lift and set $\hat{G}_N^\lambda \coloneqq \mrm{inc}(\hat{G}_{N_{k-1}}^\lambda) \cup \lbrace \hat{g}^\lambda \rbrace$.

	With these definitions of $\hat{G}_N^\mu$ it follows as in the proof of \cite[Proposition 3.3]{AST} that $\hat{G}_N\hat{F}_M$ is a basis of $\Hom_\mc{C}(M,N)$, so (\ref{cellular_newsection_ingredients:basis}) holds.

	We still have to argue that this basis satisfies (\ref{cellular_newsection_ingredients:lemma45}). Let $\phi \in \langle \hat{G}_N^\lambda \hat{F}_M^\lambda \rangle_K$ be non-zero. First, assume that we are in the case $\lambda \neq \mu$ of the recursive definition of $\hat{G}_N$ from above. Then, by definition of $\hat{G}_N^\lambda\hat{F}_M^\lambda$, there is a non-zero $\tilde{\phi} \in \langle \hat{G}_{N_{k-1}}^\lambda \hat{F}_M^\lambda \rangle_K$ with $\phi = \mrm{inc}(\tilde{\phi})$. By the induction hypothesis, we have $\tilde{\phi}_\lambda \neq 0$. Since $N_{k-1} \rarr N$ is an embedding, it follows that $\phi_\lambda \geq \tilde{\phi}_\lambda$, and $\phi_\lambda \neq 0$ as claimed. Now, assume that we are in the case $\lambda = \mu$. By definition of $\hat{G}_N^\lambda$ we can write $\phi = \mrm{inc}(\wt{\phi}) + \phi'$, where $\wt{\phi} \in \langle \hat{G}_{N_{k-1}}^\lambda F_M^\lambda \rangle_K$ and $\phi' \in \langle \hat{g}^\lambda F_M^\lambda \rangle_K$. First, assume that $\phi' \neq 0$. Because of the exact sequence (\ref{cellularity_hom_exact_sequence}) we have $\pi \circ \phi' \neq 0$. This is a morphism $M \rarr \nabla(\lambda)$, and since it is non-zero, we have $(\pi \circ \phi')_\lambda \neq 0$. Since $\pi \circ \mrm{inc}(\wt{\phi}) = 0$, we have $\pi \circ \phi = \pi \circ \phi'$. Thus $(\pi \circ \phi)_\lambda \neq 0$, implying that $\phi_\lambda \neq 0$ too. On the other hand, if $\phi' = 0$, we must have $\wt{\phi} \neq 0$ and the same argument, as above, shows that $\phi_\lambda \neq 0$. This concludes the proof of (\ref{cellular_newsection_ingredients:basis}) and (\ref{cellular_newsection_ingredients:lemma45}). \\

	Let us now choose bases and lifts satisfying (\ref{cellular_newsection_ingredients:basis}) and (\ref{cellular_newsection_ingredients:lemma45}). If $(c_{ij}^\mu)_\nu \neq 0$, then by definition $\lbrack \Image c_{ij}^\mu : L(\nu) \rbrack \neq 0$. This implies in particular that $\lbrack \Image \hat{g}_i^\mu : L(\nu) \rbrack \neq 0$. Recall that $\hat{g}_i^\mu$ is a morphism $T(\mu) \rarr N$. Hence, $\Image \hat{g}_i^\mu \simeq T(\mu)/\Ker \hat{g}_i^\mu$ and therefore $\nu \leq \mu$. In other words, $(c_{ij}^\mu)_\nu = 0$ unless $\nu \leq \mu$. Moreover, as $0 \neq c_{ij}^\mu \in \hat{G}_N^\mu \hat{F}_M^\mu$, we know from (\ref{cellular_newsection_ingredients:lemma45}) that $(c_{ij}^\mu)_\mu \neq 0$. Hence, $c_{ij}^\mu \in \Hom_\mc{C}(M,N)^{\leq \lambda}$ if and only if $\mu \leq \lambda$. In particular, $\langle (\hat{G}_N\hat{F}_M)^{\leq \lambda} \rangle_K \subs \Hom_\mc{C}(M,N)^{\leq \lambda}$. By assumption $(\hat{G}_N\hat{F}_M)^{\leq\lambda}$ is linearly independent, so we just need to show that it spans $\Hom_\mc{C}(M,N)^{\leq \lambda}$. Let $\phi \in \Hom_{\mc{C}}(M,N)^{\leq \lambda}$ be non-zero. Since $\hat{G}_N\hat{F}_M$ is a basis of $\Hom_\mc{C}(M,N)$, we can write
	\[
	\phi = \sum_{ \substack{\mu \in \Lambda \\ i \in \mc{I}^\mu , \ j \in \mc{J}^\mu }} a_{ij}^\mu c_{ij}^\mu
	\]
	with certain $a_{ij}^\mu \in K$, not all zero. Choose $\mu \in \Lambda$ maximal with the property that $a_{i j}^\mu \neq 0$ for some $i \in \mc{I}^\mu$ and $j \in \mc{J}^\mu$. Let
	\[
	\phi^\mu \coloneqq \sum_{i\in \mc{I}^\mu,j\in \mc{J}^\mu} a_{ij}^\mu c_{ij}^\mu \quad \tn{and} \quad \phi^{\neq \mu} \coloneqq \sum_{ \substack{\nu \neq \mu \\ i \in \mc{I}^\nu, j \in \mc{J}^\nu}} a_{ij}^\nu c_{ij}^\nu \;,
	\]
	so $\phi = \phi^\mu + \phi^{\neq \mu}$. Note that $\phi^\mu \in \langle \hat{G}_N^\mu \hat{F}_M^\mu \rangle_K$ is non-zero. Hence, we know from (\ref{cellular_newsection_ingredients:lemma45}) that $(\phi^\mu)_\mu \neq 0$. Moreover, the maximality of $\mu$ and the arguments above show that $(\phi^{\neq \mu})_\mu = 0$. Hence, $\phi_\mu = (\phi^\mu + \phi^{\neq \mu})_\mu = (\phi^\mu)_\mu \neq 0$ by \Autoref{f_plus_g_weight}. By definition of $\Hom_\mc{C}(M,N)^{\leq \lambda}$, this implies $\mu \leq \lambda$. Hence, by what we have said above, we have $c_{ij}^\mu \in \Hom_{\mc{C}}(M,N)^{\leq \lambda}$ for all $i,j$. In total, this shows that $\phi \in \langle (\hat{G}_N\hat{F}_M)^{\leq \lambda} \rangle_K$.

	The second statement in  (\ref{cellular_newsection_ingredients:filtered_basis}) follows analogously. \\

  To prove part (\ref{cellular_newsection_ingredients:disjoint}), let $\phi \in \hat{G}_N^\lambda\hat{F}_M^\lambda \cap \hat{G}_N^\mu\hat{F}_M^\mu$. Note that $\phi \neq 0$. Then we know from part (\ref{cellular_newsection_ingredients:lemma45}) that $\phi_\lambda \neq 0$ and from part (\ref{cellular_newsection_ingredients:filtered_basis}) we know that $\phi \in \Hom_{\mathcal{C}}(M,N)^{\leq \lambda}$. Analogously, we know that $\phi_\mu \neq 0$ and $\phi \in \Hom_{\mathcal{C}}(M,N)^{\leq \mu}$. Now, $\phi  \in  \Hom_{\mathcal{C}}(M,N)^{\leq \lambda}$ and $\phi_\mu \neq 0$ implies that $\mu \leq \lambda$. Analogously, we conclude that $\lambda \leq \mu$ and thus $\lambda = \mu$.
\end{proof}

\begin{proof}[Proof of \Autoref{cellular_newsection_basis_independent}]
	Having established \Autoref{cellular_newsection_ingredients}, the claim can now be proven by the exact same arguments as in the proofs of \cite[Lemma 3.6]{AST} and \cite[Lemma 3.7]{AST}. Namely, first fix $G_N \coloneqq \bigcup_{\lambda \in \Lambda} G_N^\lambda$ and a lift $\hat{G}_N \coloneqq \{ \hat{g}^\lambda \mid \lambda \in \Lambda, i \in \mathcal{I}^\lambda \}$ of $G_N$ satisfying the properties in \Autoref{cellular_newsection_ingredients}. Let $\tilde{G}_N \coloneqq \{ \tilde{g}_i^\lambda \mid \lambda \in \Lambda, i \in \mathcal{I}^\lambda \}$ be any other lift of $G_N$ (not necessarily satisfying the properties in \Autoref{cellular_newsection_ingredients}). By construction, we have $\hat{g}_i^\lambda \circ i^\lambda = g_i^\lambda = \tilde{g}_i^\lambda \circ i^\lambda$, hence $(\hat{g}_i^\lambda - \tilde{g}_i^\lambda) \circ i^\lambda = 0$. Therefore $\Delta(\lambda)= \Image i^\lambda \subseteq \Ker(\hat{g}_i^\lambda - \tilde{g}_i^\lambda)$. Since $\Image(\hat{g}_i^\lambda - \tilde{g}_i^\lambda) \simeq T(\lambda)/\Ker( \hat{g}_i^\lambda - \tilde{g}_i^\lambda)$, we have $( \hat{g}_i^\lambda - \tilde{g}_i^\lambda)_\lambda = 0$. Let $c_{ij}^\lambda \coloneqq \hat{g}_i^\lambda \circ \hat{f}_j^\lambda$ and set $\tilde{c}_{ij}^\lambda \coloneqq \tilde{g}_i^\lambda \circ \hat{f}_j^\lambda$. Then
	\begin{equation}
		c_{ij}^\lambda - \tilde{c}_{ij}^\lambda = (\hat{g}_i^\lambda - \tilde{g}_i^\lambda) \circ \hat{f}_j^\lambda
	\end{equation}
	and hence $(c_{ij}^\lambda - \tilde{c}_{ij}^\lambda)_\lambda = 0$ as well, i.e. $c_{ij}^\lambda - \tilde{c}_{ij}^\lambda \in \Hom_{\mathcal{C}}(M,N)^{< \lambda}$. It thus follows from part (\ref{cellular_newsection_ingredients:filtered_basis}) of \Autoref{cellular_newsection_ingredients} that the transformation matrix from  $\hat{G}_N \hat{F}_M = \{ c_{ij}^\lambda \mid \lambda \in \Lambda, i \in \mathcal{I}^\lambda, j \in \mathcal{J}^\lambda \}$ to $\tilde{G}_N \hat{F}_M = \{ \tilde{c}_{ij}^\lambda \mid \lambda \in \Lambda, i \in \mathcal{I}^\lambda, j \in \mathcal{J}^\lambda \}$ is unitriangular. Since $\hat{G}_N \hat{F}_M$ is a basis of $\Hom_{\mathcal{C}}(M,N)$ by \Autoref{cellular_newsection_ingredients}, we conclude that $\tilde{G}_N \hat{F}_M$ is a basis of $\Hom_{\mathcal{C}}(M,N)$ as well.

	A similar change-of-basis argument shows that we can replace the $G_N^\lambda$ by any other basis of $\Hom_{\mathcal{C}}(\Delta(\lambda), N)$ and still get a basis after choosing lifts; see \cite[Lemma 3.7]{AST}.
\end{proof}

\subsection{Standard bases}

Now, for a tilting object $T \in \mathcal{C}$ let us define
\begin{equation}
\mathcal{P}_T \coloneqq \lbrace \lambda \in \Lambda \mid G_T^\lambda F_T^\lambda \neq \emptyset \rbrace = \lbrace \lambda \in \Lambda \mid \lbrack T:\Delta(\lambda) \rbrack \neq 0 \tn{ and } \lbrack T:\nabla(\lambda) \rbrack \neq 0 \rbrace \;.
\end{equation}
We equip this set with the partial order $\leq$ from $\Lambda$ and set
\begin{equation}
E_T \coloneqq \End_{\mc{C}}(T) \;.
\end{equation}

The point of this paper is that the basis $\hat{G}_T \hat{F}_T$ of the algebra $E_T$ has special combinatorial properties, namely it is a \emph{standard basis} in the sense of Du and Rui \cite{DuRui}. We recall the precise definition.

\begin{defn}[Du–Rui] \label{standard_datum_def}
	A \word{standard basis} of a finite-dimensional $K$-algebra $E$ is a $K$-basis $\mathcal{B}$ of $E$ which is fibered over a poset $\Lambda$, i.e., $\mc{B} = \coprod_{\lambda \in \Lambda} \mc{B}^\lambda$, together with indexing sets $\mathcal{I}^{\lambda}$ and $\mathcal{J}^{\lambda}$ for any $\lambda \in \Lambda$ such that
	\begin{equation}
		 \mc{B}^\lambda = \{ c_{ij}^{\lambda} \  | \ (i,j) \in \mathcal{I}^{\lambda} \times \mathcal{J}^{\lambda} \} \;,
	\end{equation}
	and for any $\varphi \in E$ and $c_{ij}^\lambda \in \mc{B}^{\lambda}$ we have
		\begin{equation}
		\varphi \cdot c^{\lambda}_{ij} \equiv \sum_{k \in \mathcal{I}^{\lambda}} r_{k}^\lambda(\varphi,i) c_{k j}^{\lambda} \ \mod \ E^{< \lambda} \;,
	\end{equation}
		\begin{equation}
		c^{\lambda}_{ij} \cdot \varphi \equiv \sum_{l \in \mathcal{J}^{\lambda}} r_{l}^\lambda(j,\varphi) c_{il}^{\lambda} \ \mod \ E^{< \lambda} \;,
	\end{equation}
		where $r_{k}^\lambda(\varphi,i), r_{l}^\lambda(j,\varphi) \in K$ are independent of $j$ and $i$, respectively. Here, $E^{<\lambda}$ is the subspace of $E$ spanned by the set $\bigcup_{\mu < \lambda} \mathcal{B}^\mu$.
\end{defn}

Note that a standard basis is not just a basis $\mathcal{B}$ but comes with additional data, namely a poset $\Lambda$, the decomposition of $\mathcal{B}$ into the $\mathcal{B}^\lambda$, and a particular indexing scheme for the elements in $\mathcal{B}^\lambda$. It would be more precise to say ``standard datum'' instead of ``standard basis'' but we prefer the latter terminology.

A cellular basis (introduced earlier by Graham and Lehrer \cite{Graham-Lehrer-Cellular}) is a standard basis which behaves symmetrically under an anti-involution on the algebra.

\begin{defn}
A \word{cellular basis} (in the sense of Graham-Lehrer) of a finite-dimensional $K$-algebra $E$ is a standard basis $\mathcal{B}$ together with an algebra anti-involution $\iota$ on $E$ such that $\mathcal{I}^\lambda = \mathcal{J}^\lambda$ for all $\lambda \in \Lambda$ and
\begin{equation}
	\iota(c_{i,j}^\lambda) = c_{j,i}^\lambda \;,
\end{equation}
for all $(i,j) \in \mathcal{I}^\lambda \times \mathcal{J}^\lambda$.
\end{defn}

We will address involutions and cellularity in \Autoref{subsec_duality} and first consider the more general concept of a standard basis. With Theorem \ref{cellular_newsection_basis_independent} established, the exact same arguments as in the proof of \cite[Theorem 3.9]{AST} now show the first part of our main theorem from the introduction:

\begin{thm} \label{end_tilting_is_cellular}
	For any tilting object $T \in \mathcal{C}$ the basis $\hat{G}_T\hat{F}_T = \coprod_{\lambda \in \Lambda} \hat{G}_T^\lambda \hat{F}_T^\lambda$ is a standard basis of the $K$-algebra $E_T$.
\end{thm}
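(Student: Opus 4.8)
The plan is to check directly that $\mathcal{B} \coloneqq \hat{G}_T\hat{F}_T$, fibered over $\Lambda$ via $\mathcal{B}^\lambda \coloneqq \hat{G}_T^\lambda\hat{F}_T^\lambda = \{ c_{ij}^\lambda \mid (i,j) \in \mathcal{I}_T^\lambda \times \mathcal{J}_T^\lambda \}$ (with the order inherited from $\Lambda$), satisfies \Autoref{standard_datum_def}. Two of the ingredients are already available: $\mathcal{B}$ is a $K$-basis of $E_T = \Hom_{\mathcal{C}}(T,T)$ by \Autoref{cellular_newsection_basis_independent} applied with $M = N = T$, and the decomposition into the $\mathcal{B}^\lambda$ is disjoint by \Autoref{cellular_newsection_ingredients}(\ref{cellular_newsection_ingredients:disjoint}). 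I would then record the identification that drives everything else: the purely combinatorial filtration $E^{<\lambda}$ of \Autoref{standard_datum_def}, i.e.\ the span of $\bigcup_{\mu < \lambda}\mathcal{B}^\mu$, coincides with the categorical filtration $\Hom_{\mathcal{C}}(T,T)^{<\lambda}$ of \eqref{hom_filtration} --- this is precisely \Autoref{cellular_newsection_ingredients}(\ref{cellular_newsection_ingredients:filtered_basis}) for $M = N = T$. After that only the two multiplication rules remain.

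For the left rule, recall that $c_{ij}^\lambda = \hat{g}_i^\lambda \circ \hat{f}_j^\lambda$ with $\hat{f}_j^\lambda \colon T \to T(\lambda)$ and $\hat{g}_i^\lambda \colon T(\lambda) \to T$ a lift of $g_i^\lambda$, so $\hat{g}_i^\lambda \circ i^\lambda = g_i^\lambda$. Given $\varphi \in E_T$, I would first treat the ``$\hat{g}$-half'': $\varphi \circ \hat{g}_i^\lambda \colon T(\lambda) \to T$ restricts along $i^\lambda$ to $\varphi \circ g_i^\lambda \in \Hom_{\mathcal{C}}(\Delta(\lambda), T)$, which I expand in the basis $G_T^\lambda$ as $\sum_{k} r_k^\lambda(\varphi,i)\, g_k^\lambda$ with scalars depending only on $\varphi$ and $i$. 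Setting $h \coloneqq \varphi \circ \hat{g}_i^\lambda - \sum_{k} r_k^\lambda(\varphi,i)\, \hat{g}_k^\lambda$, we get $h \circ i^\lambda = 0$, so $\Image h$ is a quotient of $T(\lambda)/\Delta(\lambda)$; since $T(\lambda)$ and $\Delta(\lambda)$ are of highest weight $\lambda$ (\Autoref{ass_tilting}), every composition factor $L(\nu)$ of $T(\lambda)/\Delta(\lambda)$, hence of $\Image h$, has $\nu < \lambda$. Therefore, for every $j$, the image of $h \circ \hat{f}_j^\lambda \colon T \to T$ lies in $\Image h$, whence $h \circ \hat{f}_j^\lambda \in \Hom_{\mathcal{C}}(T,T)^{<\lambda} = E^{<\lambda}$, and
\[
\varphi \cdot c_{ij}^\lambda = (\varphi \circ \hat{g}_i^\lambda) \circ \hat{f}_j^\lambda = \sum_{k \in \mathcal{I}_T^\lambda} r_k^\lambda(\varphi,i)\, c_{kj}^\lambda + h \circ \hat{f}_j^\lambda \equiv \sum_{k \in \mathcal{I}_T^\lambda} r_k^\lambda(\varphi,i)\, c_{kj}^\lambda \ \mod \ E^{<\lambda},
\]
with coefficients independent of $j$. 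The right rule is the mirror image: treat the ``$\hat{f}$-half'' via $\pi^\lambda \circ \hat{f}_j^\lambda = f_j^\lambda$, expand $f_j^\lambda \circ \varphi = \sum_l r_l^\lambda(j,\varphi)\, f_l^\lambda$ in $F_T^\lambda$, put $h' \coloneqq \hat{f}_j^\lambda \circ \varphi - \sum_l r_l^\lambda(j,\varphi)\, \hat{f}_l^\lambda$ so that $\Image h' \subseteq \Ker \pi^\lambda$, which again has all composition factors $< \lambda$, deduce $\hat{g}_i^\lambda \circ h' \in E^{<\lambda}$, and obtain $c_{ij}^\lambda \cdot \varphi \equiv \sum_{l \in \mathcal{J}_T^\lambda} r_l^\lambda(j,\varphi)\, c_{il}^\lambda \ \mod \ E^{<\lambda}$ with coefficients independent of $i$.

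This is the argument of \cite[Theorem~3.9]{AST}, which goes through once the invariant $\varphi_\lambda$ of \eqref{cellular_newsection_key_modification} replaces their weight-space filtration. I do not expect a genuine obstacle; the only point requiring care --- and the reason \Autoref{cellular_newsection_ingredients} was set up as it was --- is that the ``error terms'' $h \circ \hat{f}_j^\lambda$ and $\hat{g}_i^\lambda \circ h'$ are a priori known only to have images whose composition factors lie strictly below $\lambda$, and one must convert this into membership in the \emph{span of the lower cells}; this is exactly what the identification $E^{<\lambda} = \Hom_{\mathcal{C}}(T,T)^{<\lambda}$ from \Autoref{cellular_newsection_ingredients}(\ref{cellular_newsection_ingredients:filtered_basis}) provides. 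Everything else is routine bookkeeping with composition multiplicities, \Autoref{ass_tilting}, and the factorizations fixed in the basic construction.
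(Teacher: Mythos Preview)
Your proposal is correct and follows essentially the same route as the paper's proof: both expand $\varphi \circ g_i^\lambda$ in the basis $G_T^\lambda$, observe that the resulting ``error'' morphism $h = \varphi \circ \hat{g}_i^\lambda - \sum_k r_k^\lambda(\varphi,i)\,\hat{g}_k^\lambda$ kills $i^\lambda$ and hence lands in $\Hom_{\mathcal{C}}(T(\lambda),T)^{<\lambda}$, and then use the identification $\Hom_{\mathcal{C}}(T,T)^{<\lambda} = E_T^{<\lambda}$ to conclude. If anything, you are more explicit than the paper about why this last identification holds (via \Autoref{cellular_newsection_ingredients}(\ref{cellular_newsection_ingredients:filtered_basis})) and about why $\Image h$ has only composition factors strictly below $\lambda$; the paper simply asserts both.
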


\begin{proof}
By construction, the elements in $\hat{G}_T^\lambda \hat{F}_T^\lambda$ are of the form $c_{ij}^\lambda = \hat{g}_i^\lambda \circ \hat{f}_j^\lambda$ with $i \in \mathcal{I}_T^\lambda$ and $j \in \mathcal{J}_T^\lambda$. Let $\varphi \in E_T$. Since $G_T^\lambda$ is a basis of $\Hom_{\mathcal{C}}(\Delta(\lambda), T)$, we can write
\begin{equation} \label{end_tilting_is_cellular_equ1}
	\varphi \circ g_i^\lambda = \sum_{k \in \mathcal{I}_T^\lambda } r_{k}^\lambda(\varphi,i) g_k^\lambda
\end{equation}
with suitable $r_{k}^\lambda(\varphi,i) \in K$. Note that $\Hom_{\mathcal{C}}(\Delta(\lambda),T) = \Hom_{\mathcal{C}}(\Delta(\lambda),T)^{\leq \lambda}$ because $\Delta(\lambda)$ is of highest weight $\lambda$. For any $k \in \mathcal{I}^\lambda$ we have $\hat{g}_k^\lambda \circ i^\lambda = g_k^\lambda$ by construction. Hence, from \eqref{end_tilting_is_cellular_equ1} we get
\begin{equation}
\varphi \circ \hat{g}_i^\lambda \circ i^\lambda = \sum_{k \in \mathcal{I}_T^\lambda} r_k^\lambda(\varphi,i) \hat{g}_k^\lambda \circ i^\lambda \;,
\end{equation}
i.e.
\begin{equation}
\left( \varphi \circ \hat{g}_i^\lambda  - \sum_{k \in \mathcal{I}_T^\lambda} r_k^\lambda(\varphi,i) \hat{g}_k^\lambda \right) \circ i^\lambda = 0 \;,
\end{equation}
The morphism in parentheses is contained in $\Hom_{\mathcal{C}}(T(\lambda),T)^{\leq \lambda}$ because $T(\lambda)$ is of highest weight $\lambda$, and the above equation now implies that
\begin{equation}
	\varphi \circ \hat{g}_i^\lambda  - \sum_{k \in \mathcal{I}_T^\lambda} r_k^\lambda(\varphi,i) \hat{g}_k^\lambda \in \Hom_{\mathcal{C}}(T(\lambda),T)^{< \lambda} \;.
\end{equation}
We can thus conclude that
\begin{equation}
\varphi \circ \hat{g}_i^\lambda \circ \hat{f}_j^\lambda - \sum_{k \in \mathcal{I}_T^\lambda} r_k^\lambda(\varphi,i) \hat{g}_k^\lambda \circ \hat{f}_j^\lambda \in \Hom_{\mathcal{C}}(T,T)^{<\lambda} = E_T^{< \lambda} \;,
\end{equation}
i.e.
\begin{equation}
\varphi \circ c_{ij}^\lambda \equiv \sum_{k \in \mathcal{I}_T^\lambda} r_k^\lambda(\varphi,i) c_{kj}^\lambda \bmod  E_T^{< \lambda} \;.
\end{equation}
Similarly, one proves the relation for the multiplication $c_{ij}^\lambda \circ \varphi$.
\end{proof}

\subsection{Remark on cell modules} \label{rem_on_cell_modules}

Generalizing the cell modules defined by Graham–Lehrer \cite[Definition 2.1]{Graham-Lehrer-Cellular}, Du–Rui \cite[Definition 2.1.2]{DuRui} defined (cellular) (co-)standard modules for each $\lambda \in \mathcal{P}_T$. The (cellular) \word{standard module} $\Delta_T(\lambda)$ attached to $\lambda \in \mc{P}_T$ is a left $E_T$-module defined using the coefficients for left multiplication appearing in the definition of a standard basis. Similarly, the (cellular) \word{costandard module} $\nabla_T(\lambda)$ is a left $E_T$-module which is the dual of an analogous right module defined using right multiplication. Though related, these should not be confused with the standard and costandard modules in $\mc{C}$ given in \Autoref{ass_ext_vanishing}. In the terminology of Graham–Lehrer the left $E_T$-module $\Delta_T(\lambda)$ is the left cell module attached to $\lambda$ and the dual of $\nabla_T(\lambda)$, a right $E_T$-module, is the right cell module (or dual cell module) attached to $\lambda$.

The arguments in \cite[\S4]{AST} show that we have
\begin{equation}\label{eq:standardcostandardiso}
\Delta_T(\lambda) \simeq \Hom_{\mathcal{C}}(\Delta(\lambda),T) \quad \tn{and} \quad \nabla_T(\lambda)^* \simeq \Hom_{\mathcal{C}}(T,\nabla(\lambda)) \;.
\end{equation}
This is due to the particular form of the standard basis and shows that these modules do not depend on the choice of the bases $F^\lambda$ and $G^\lambda$. Generalizing the construction of Graham–Lehrer, Du–Rui \cite[2.3]{DuRui} defined for any $\lambda \in \mathcal{P}_T$ a bilinear pairing $\beta_\lambda$ between $\Delta_T(\lambda)$ and $\nabla_T(\lambda)^*$. It follows from \cite[Theorem 2.4.1]{DuRui} that the subset
\begin{equation}
\ul{\mathcal{P}}_{T} \coloneqq \lbrace \lambda \in \mathcal{P}_T \mid \beta_\lambda \neq 0 \rbrace \subs \mathcal{P}_T
\end{equation}
classifies the simple $E_T$-modules: the cellular standard module $\Delta_T(\lambda)$ has simple head $L_T(\lambda)$ if and only if $\beta_\lambda \neq 0$, and $\lambda \mapsto L_T(\lambda)$ is a bijection between $\ul{\mathcal{P}}_{T}$ and the set of isomorphism classes of simple left $E_T$-modules.

As shown in \cite[Definition 1.2.1]{DuRui}, the opposite algebra $E_T^\op$ is naturally equipped with a standard datum, the \word{opposite standard datum},  which has the same indexing poset $\mc{P}_T$ and flipped basis parts. The standard and costandard modules for this standard datum are then given by
\begin{equation} \label{opposite_cellular_standard_modules}
\Delta_T^\op(\lambda) = \nabla_T(\lambda)^* = \Hom_{\mc{C}}(T,\nabla(\lambda)) \;\ \tn{and} \;\ \nabla_T^\op(\lambda) = \Delta_T(\lambda)^* = \Hom_{\mc{C}}(\Delta(\lambda),T)^* \;,
\end{equation}
respectively.
The arguments by Andersen–Stroppel–Tubbenhauer \cite[\S4, Theorem 4.12, Theorem 4.13]{AST} can be used word-for-word to prove the following two theorems:

\begin{thm} \label{cell_newsection_dim_simples}
	If $\lambda \in \ul{\mathcal{P}}_{T}$, then $\dim L_T(\lambda)$ is equal to the multiplicity of $T(\lambda)$ as a direct summand of $T$.
\end{thm}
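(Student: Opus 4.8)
The statement we want is that for $\lambda \in \underline{\mathcal{P}}_T$, the dimension of the simple $E_T$-module $L_T(\lambda)$ equals the multiplicity $(T : T(\lambda))$ of the indecomposable tilting object $T(\lambda)$ as a direct summand of $T$. The plan is to follow the Du--Rui/Graham--Lehrer theory of cell modules combined with the explicit description of the standard and costandard $E_T$-modules from \eqref{eq:standardcostandardiso}, namely $\Delta_T(\lambda) \simeq \Hom_{\mathcal{C}}(\Delta(\lambda),T)$ and $\nabla_T(\lambda)^* \simeq \Hom_{\mathcal{C}}(T,\nabla(\lambda))$. The key general input is \cite[Theorem 2.4.1]{DuRui}: for $\lambda \in \underline{\mathcal{P}}_T$, the simple module $L_T(\lambda)$ is the image of the pairing map $\Delta_T(\lambda) \to \nabla_T(\lambda)$ induced by $\beta_\lambda$ (equivalently, $L_T(\lambda) = \Delta_T(\lambda)/\operatorname{rad}\beta_\lambda$), and in particular $\dim L_T(\lambda)$ is the rank of the bilinear form $\beta_\lambda$.

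The first step is to identify this rank categorically. I would show that the pairing $\beta_\lambda \colon \Delta_T(\lambda) \times \nabla_T(\lambda)^* \to K$ is, under the identifications above, the composition pairing
\[
\Hom_{\mathcal{C}}(\Delta(\lambda),T) \times \Hom_{\mathcal{C}}(T,\nabla(\lambda)) \longrightarrow \Hom_{\mathcal{C}}(\Delta(\lambda),\nabla(\lambda)) \simeq K,
\]
sending $(a,b) \mapsto$ (the scalar by which $b \circ a$ is a multiple of $c^\lambda$). This follows from tracing through the definition of $\beta_\lambda$ in \cite[2.3]{DuRui} against the explicit form of the standard basis $c_{ij}^\lambda = \hat g_i^\lambda \circ \hat f_j^\lambda$: the structure constants computed modulo $E_T^{<\lambda}$ in the proof of \Autoref{end_tilting_is_cellular} are exactly the matrix entries of this composition pairing, because $\pi^\lambda \circ \hat f_j^\lambda = f_j^\lambda$ and $\hat g_i^\lambda \circ i^\lambda = g_i^\lambda$ and $c^\lambda = \pi^\lambda \circ i^\lambda$. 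This is essentially \cite[\S4]{AST} applied verbatim, as indicated in the excerpt.

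The second step is to compute the rank of this composition pairing directly. Write $T = \bigoplus_{\mu} T(\mu)^{\oplus m_\mu}$ with $m_\mu = (T:T(\mu))$. By \Autoref{tilting_standard_homs}, $\Hom_{\mathcal{C}}(\Delta(\lambda),T(\mu)) = 0$ unless $\lambda \le \mu$ and is one-dimensional when $\mu = \lambda$; dually $\Hom_{\mathcal{C}}(T(\mu),\nabla(\lambda)) = 0$ unless $\lambda \le \mu$ and is one-dimensional when $\mu = \lambda$; and by \Autoref{tilting_standard_homs}(3) the composite $\Delta(\lambda) \hookrightarrow T(\lambda) \twoheadrightarrow \nabla(\lambda)$ is non-zero. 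The pairing decomposes as a sum over the summands $T(\mu)$ of $T$, and the contribution of each copy of $T(\mu)$ with $\mu \ne \lambda$ to the value landing in $\Hom_{\mathcal{C}}(\Delta(\lambda),\nabla(\lambda))$ vanishes: indeed if $\mu \not\ge \lambda$ one of the two Hom-spaces is zero, while if $\mu > \lambda$ the image of $a \colon \Delta(\lambda) \to T(\mu)$ has all composition factors $L(\nu)$ with $\nu < \mu$ but composing with $b \colon T(\mu) \to \nabla(\lambda)$ and asking for a non-zero multiple of $c^\lambda$ forces $L(\lambda)$ in the image, and a multiplicity count on $T(\mu)$ (using that $[T(\mu):L(\mu)]=1$, $\mu>\lambda$) rules this out — alternatively just observe $\Hom_{\mathcal C}(\Delta(\lambda),\nabla(\lambda))\to$ needs $b\circ a$ factoring through the one-dimensional space, and on a $T(\mu)$ with $\mu\neq\lambda$ the composite $\Delta(\lambda)\to T(\mu)\to\nabla(\lambda)$ is forced to be $0$ since it would otherwise exhibit $L(\lambda)$ both in the top of a $\Delta$-quotient and socle of a $\nabla$-sub inside $T(\mu)$. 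Hence only the $m_\lambda$ copies of $T(\lambda)$ contribute, and on each the pairing restricts to the non-degenerate $1\times 1$ pairing $K \times K \to K$ coming from \Autoref{tilting_standard_homs}(3). Therefore $\beta_\lambda$ is, up to change of basis, the identity $m_\lambda \times m_\lambda$ matrix, so $\operatorname{rank}\beta_\lambda = m_\lambda = (T:T(\lambda))$, and in particular $\lambda \in \underline{\mathcal{P}}_T$ iff $m_\lambda \neq 0$, i.e. iff $T(\lambda)$ is a summand of $T$.

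\textbf{Main obstacle.} The only real work is the bookkeeping in the second step: showing that the off-diagonal contributions from summands $T(\mu)$ with $\mu \neq \lambda$ genuinely vanish in $\Hom_{\mathcal{C}}(\Delta(\lambda),\nabla(\lambda))$ and that the diagonal block is non-degenerate. Both are multiplicity arguments with $L(\lambda)$ inside $T(\mu)$, controlled entirely by \Autoref{tilting_standard_homs} and the highest-weight condition \ref{ass_tilting:cond1}; the subtlety is purely in organizing the direct-sum decomposition of the pairing compatibly with a chosen basis so that the matrix is literally block-diagonal with invertible $\lambda$-block. Since \cite[\S4]{AST} carries this out in the quantum group case using exactly the same formal inputs that we have available, the argument transfers with no new ideas required.
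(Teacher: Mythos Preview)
Your approach is the same as the paper's, which simply defers to \cite[\S4, Theorem~4.12]{AST}; your plan (identify $\beta_\lambda$ with the composition pairing, block-diagonalize over the indecomposable summands of $T$, and read off the rank) is exactly the AST argument.

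One point deserves tightening: your two sketched reasons why the block
\[
\Hom_{\mathcal C}(\Delta(\lambda),T(\mu))\times\Hom_{\mathcal C}(T(\mu),\nabla(\lambda))\longrightarrow \Hom_{\mathcal C}(\Delta(\lambda),\nabla(\lambda))
\]
vanishes for $\mu>\lambda$ are not quite arguments as stated---a raw multiplicity count on $[T(\mu):L(\lambda)]$ does not obviously forbid a non-zero composite. The clean way (and what AST does) is to lift via \Autoref{tilting_lift_lemma}: given $a\colon\Delta(\lambda)\to T(\mu)$ and $b\colon T(\mu)\to\nabla(\lambda)$, extend/lift to $\hat a\colon T(\lambda)\to T(\mu)$ and $\hat b\colon T(\mu)\to T(\lambda)$, so that $b\circ a=\pi^\lambda\circ(\hat b\circ\hat a)\circ i^\lambda$. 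Since $T(\mu)\not\simeq T(\lambda)$ and both are indecomposable, $\hat b\circ\hat a$ lies in the radical of the local ring $\End_{\mathcal C}(T(\lambda))$. But post-composition makes $\Hom_{\mathcal C}(\Delta(\lambda),T(\lambda))\simeq K$ (by \Autoref{tilting_standard_homs}) into a one-dimensional $\End_{\mathcal C}(T(\lambda))$-module, so the radical acts by zero; hence $(\hat b\circ\hat a)\circ i^\lambda=0$ and $b\circ a=0$. With this in place, the pairing matrix is block-diagonal with the $m_\lambda$ diagonal $1\times1$ blocks non-zero by \Autoref{tilting_standard_homs}(3), giving $\operatorname{rank}\beta_\lambda=(T:T(\lambda))$ as you claim.
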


\begin{thm} \label{cell_newsection_semisimple}
	The algebra $E_T$ is semisimple if and only if $T \in \mathcal{C}$ is semisimple.
\end{thm}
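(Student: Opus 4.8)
The plan is to show both implications separately, using the standard basis $\hat{G}_T\hat{F}_T$ and the identification of its cardinality. First, suppose that $T \in \mathcal{C}$ is semisimple. Then $T$ is a direct sum of simple objects, each of which is both standard and costandard (by the remark following \Autoref{defn:(co)standard}), hence tilting, and in fact $T(\lambda) \simeq L(\lambda)$ whenever $L(\lambda)$ is a summand of $T$. For such $\lambda$ we have $[T:\Delta(\lambda)] = [T:\nabla(\lambda)] = [T:L(\lambda)]$, so $\mathcal{P}_T$ consists exactly of those $\lambda$ with $L(\lambda)$ appearing in $T$. The standard basis element $c_{ij}^\lambda = \hat{g}_i^\lambda \circ \hat{f}_j^\lambda$ here factors through $T(\lambda) = L(\lambda) = \nabla(\lambda) = \Delta(\lambda)$, and $E_T^{<\lambda} = 0$ because there are no proper inequalities among the relevant weights (all summands being simple, any nonzero morphism between distinct summands would have to be an isomorphism, forcing equality of weights). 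Thus the standard-basis relations in \Autoref{standard_datum_def} become exact multiplication rules with no error term, which exhibits $E_T$ as a direct sum of matrix algebras $\mathrm{Mat}_{[T:L(\lambda)]}(K)$ over $\lambda \in \mathcal{P}_T$; in particular $E_T$ is semisimple. Alternatively, and more cleanly: $E_T = \End_{\mathcal{C}}(T)^{\mathrm{op}}$ with $T$ semisimple and each $\End_{\mathcal{C}}(L(\lambda)) \simeq K$ by \Autoref{ext_lemma}(1), so $E_T$ is a product of matrix algebras by the classical argument, hence semisimple.

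For the converse, suppose $T$ is not semisimple and show $E_T$ is not semisimple. Write $T = \bigoplus_{\lambda} T(\lambda)^{\oplus m_\lambda}$ with $m_\lambda \geq 0$ (finitely many nonzero). Since $T$ is not semisimple, some summand $T(\lambda)$ with $m_\lambda > 0$ is not simple, i.e. $T(\lambda) \neq L(\lambda)$. I would then compare the total dimension of $E_T$, which is $\#(\hat{G}_T\hat{F}_T) = \sum_{\lambda \in \mathcal{P}_T} [T:\nabla(\lambda)]\cdot[T:\Delta(\lambda)]$ by \Autoref{end_tilting_is_cellular}, with $\sum_{\lambda \in \ul{\mathcal{P}}_T} (\dim L_T(\lambda))^2$, which is the dimension of $E_T/\mathrm{rad}(E_T)$. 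By \Autoref{cell_newsection_dim_simples} we have $\dim L_T(\lambda) = m_\lambda$ for $\lambda \in \ul{\mathcal{P}}_T$. So it suffices to prove that these two sums differ, i.e. that $E_T$ is semisimple forces $[T:\nabla(\lambda)] = [T:\Delta(\lambda)] = m_\lambda$ and $\ul{\mathcal{P}}_T = \{\lambda : m_\lambda > 0\}$, which in turn forces each $T(\lambda)$ with $m_\lambda > 0$ to satisfy $[T(\lambda):\Delta(\lambda)]=1=[T(\lambda):\nabla(\lambda)]$ being its \emph{only} standard/costandard factor — and then one checks $T(\lambda)$ has a single composition factor. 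Concretely: $[T:\Delta(\mu)] = \sum_\nu m_\nu (T(\nu):\Delta(\mu)) \geq m_\mu + \sum_{\nu > \mu} m_\nu (T(\nu):\Delta(\mu)) \geq m_\mu$ with equality iff no $T(\nu)$ with $\nu > \mu$ and $m_\nu > 0$ has $\Delta(\mu)$ in its standard filtration. The dimension count $\sum (\dim L_T(\lambda))^2 \leq \dim E_T = \sum [T:\nabla(\lambda)][T:\Delta(\lambda)]$ combined with $\dim L_T(\lambda) = m_\lambda$ and $[T:\nabla(\lambda)], [T:\Delta(\lambda)] \geq m_\lambda$ shows semisimplicity of $E_T$ is equivalent to all these inequalities being equalities, hence to each tilting summand $T(\nu)$ appearing in $T$ having $\Delta(\nu)$ as its unique standard factor and $\nabla(\nu)$ as its unique costandard factor; but $\Delta(\nu) \hookrightarrow T(\nu) \twoheadrightarrow \nabla(\nu)$ then forces $\Delta(\nu) = T(\nu) = \nabla(\nu)$, and since $\Delta(\nu)$ equals its own head composed with radical-free part while $\nabla(\nu)$ equals its socle, we get $T(\nu) = L(\nu)$ — contradicting non-semisimplicity.

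The main obstacle I anticipate is making the converse direction clean: one must be careful that $\ul{\mathcal{P}}_T \subseteq \mathcal{P}_T$ can be a proper inclusion in general, so the bare dimension inequality $\sum_{\ul{\mathcal{P}}_T} m_\lambda^2 \leq \sum_{\mathcal{P}_T} [T:\nabla(\lambda)][T:\Delta(\lambda)]$ alone is not obviously an equality-forcing constraint unless one also knows $E_T$ semisimple implies $\ul{\mathcal{P}}_T = \mathcal{P}_T$ and the multiplicities match. The most robust route is therefore probably to avoid the global dimension count and instead argue directly: if $T(\nu)$ is a non-simple tilting summand of $T$, produce an explicit non-semisimple quotient or subalgebra of $E_T$ — for instance, by using the idempotent $e \in E_T$ projecting onto a single copy of $T(\nu)$, so that $e E_T e \simeq \End_{\mathcal{C}}(T(\nu))^{\mathrm{op}}$, and showing this corner algebra is local but not a division ring (it is local because $T(\nu)$ is indecomposable and $\mathcal{C}^t$ is Krull–Schmidt by \Autoref{tilting_krull_schmidt}, and not a field because $\mathrm{id}_{\Delta(\nu)}$ lifted to an endomorphism $\hat{g} \circ \hat{f}$ of $T(\nu)$ is non-invertible when $T(\nu) \neq \Delta(\nu)$, giving a nonzero nilpotent in $\mathrm{rad}$). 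A corner algebra $eE_T e$ with nonzero radical forces $\mathrm{rad}(E_T) \neq 0$. I would present the forward direction via the matrix-algebra argument and the converse via this corner-algebra argument, as it is shortest and sidesteps the bookkeeping; this mirrors \cite[Theorem 4.13]{AST} which the excerpt says applies word-for-word.
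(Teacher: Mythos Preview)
Your forward direction is fine; the matrix-algebra argument using $\End_{\mathcal{C}}(L(\lambda))\simeq K$ is all that is needed. The converse, however, has a genuine gap. Your corner-algebra approach reduces to the claim that $\End_{\mathcal{C}}(T(\nu))$ is local but not a field whenever $T(\nu)$ is non-simple. The specific nilpotent you propose does not work: for the $\nu$-part of the basis one may take $\hat g=\hat f=\id_{T(\nu)}$ as lifts, so $c^{\nu}_{11}=\id$. More seriously, the underlying claim can fail in a general standard category. By the basis theorem,
\[
\dim_K \End_{\mathcal{C}}(T(\nu))=\sum_\mu (T(\nu):\Delta(\mu))\,(T(\nu):\nabla(\mu)),
\]
and this can equal $1$ with $T(\nu)$ non-simple provided that for each $\mu<\nu$ at most one of $(T(\nu):\Delta(\mu))$, $(T(\nu):\nabla(\mu))$ is nonzero. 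Concretely, for the hereditary $A_2$ quiver $2\to 1$ with order $1<2$ one has $\nabla(1)=L(1)$, $\nabla(2)=L(2)$, $\Delta(2)=P(2)=I(1)$; thus $\Delta(2)$ is tilting, $T(2)=\Delta(2)$ is non-simple, yet $\End_{\mathcal{C}}(T(2))=K$. Taking $T=T(2)$ gives $E_T=K$ semisimple with $T$ non-semisimple. Your dimension-counting route hits the same wall: from $E_T$ semisimple you correctly get $(T:\Delta(\lambda))(T:\nabla(\lambda))=m_\lambda^2$ for all $\lambda$, but for $\lambda$ with $m_\lambda=0$ this only forces one of the two factors to vanish, so a summand $T(\nu)$ may still carry extra $\Delta(\mu)$-factors (or $\nabla(\mu)$-factors) with $m_\mu=0$.

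The paper gives no independent argument but defers to \cite[Theorem~4.13]{AST}. In the AST setting there is a duality, hence $(T:\Delta(\mu))=(T:\nabla(\mu))$ for every tilting $T$; then $\dim_K\End_{\mathcal{C}}(T(\nu))=\sum_\mu (T(\nu):\Delta(\mu))^2$, which equals $1$ only when $T(\nu)=\Delta(\nu)=\nabla(\nu)$, and as you observe this forces $T(\nu)=L(\nu)$. So the missing ingredient in your argument is precisely this multiplicity equality (equivalently, a duality), and the $A_2$ example shows that without it the converse implication does not hold in the stated generality.
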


\subsection{Standard dualities and cellularity} \label{subsec_duality}
We will now turn to the question of the existence of an anti-involution on $E_T$ making the standard basis a cellular basis. In light of our categorical approach, the anti-involution should come from a duality on $\mathcal{C}$. However, this requires some assumptions on the duality, leading us to the notion of a ``standard duality''. We will keep this section abstract and turn to explicit examples in \Autoref{sec_module_dualities}.

By a \word{duality} we mean a contravariant $K$-linear functor $\bbD \colon \mathcal{C} \to \mathcal{C}$ such that there is an isomorphism
\begin{equation}
	\xi \colon \id_{\mathcal{C}} \to \bbD^2 = \bbD \circ \bbD
\end{equation}
of $K$-linear functors $\mathcal{C} \to \mathcal{C}$ satisfying
\begin{equation} \label{duality_relation}
\id_{\bbD(X)} = \bbD(\xi_X) \circ \xi_{\bbD(X)} \;.
\end{equation}
It is an elementary categorical fact that a duality is an equivalence, see e.g. \cite[Lemma 2.3]{Shimizu-FS}.
We say that an object $T \in \mathcal{C}$ is \word{self-dual} if $\bbD(T)$ is isomorphic to $T$. Suppose that $T$ is self-dual and choose an isomorphism
\begin{equation}
	\Phi \coloneqq \Phi_T \colon \bbD(T) \to T \;.
\end{equation}
As in \cite[Theorem 1.2.1]{CPS-Stratifying} we define a $K$-algebra anti-morphism
\begin{equation} \label{alpha_definition}
	\alpha^{-1} \coloneqq \alpha_{\Phi}^{-1} \colon E_T \to E_T
\end{equation}
on the endomorphism algebra $E_T$ of $T$ via
\begin{equation}
\alpha^{-1}(\varphi) \coloneqq \Phi \circ \bbD(\varphi) \circ \Phi^{-1}
\end{equation}
for $\varphi \in E_T$. We compute the square $\alpha^{-2}$ to be
\begin{equation}
\alpha^{-2}(\varphi) = \Phi \circ \bbD(\Phi^{-1}) \circ \bbD^2(\varphi) \circ \bbD(\Phi) \circ \Phi^{-1} \;.
\end{equation}
By naturality of $\xi$, the diagram
\begin{equation}
	\begin{tikzcd}
		T \ar[r,"\xi_T"] \ar[d,swap,"\varphi"] & \bbD^2(T) \ar[d,"\bbD^2(\varphi)"] \\
		T \ar[r, swap,"\xi_T"] & \bbD^2(T)
	\end{tikzcd}
\end{equation}
commutes, i.e.
\begin{equation}
	\bbD^2(\varphi) = \xi_T \circ \varphi \circ \xi_T^{-1} \;.
\end{equation}
Hence, defining
\begin{equation}
	a \coloneqq a_{\Phi,\xi} \coloneqq \Phi \circ \bbD(\Phi^{-1}) \circ \xi_T \in E_T^\times
\end{equation}
we get
\begin{equation}
\alpha^{-2}(\varphi) = a \circ \varphi \circ a^{-1} \;.
\end{equation}
This means that $\alpha^{-2}$ is an inner automorphism of $E_T$, given by conjugation by $a$, and hence $\alpha^{-1}$ is an anti-automorphism. In particular, $E_T \simeq E_T^{\mathrm{op}}$. We obtain an algebra anti-automorphism
\begin{equation}
	\alpha = \alpha_\Phi \colon E_T \to E_T
\end{equation}
whose square is conjugation by $a_{\Phi,\xi}^{-1}$. There is no reason why $\alpha^2$ should be trivial, equivalently, why $\alpha$ should be an anti-\emph{involution}. This motivates the following definition.

\begin{defn} \label{defn_duality_fixed}
We say that $(T,\Phi_T)$ is a \word{fixed point} of $\bbD$ if $\alpha=\alpha_{\Phi_T}$ is an anti-involution, i.e.
\begin{equation}
	\Phi_T \circ \bbD(\Phi_T^{-1}) \circ \xi_T = \id_T \;.
\end{equation}
We say that $T \in \mathcal{C}$ is \word{fixed} by $\bbD$ if $\Phi_T$ can be chosen such that $(T,\Phi_T)$ is a fixed point.
\end{defn}

It is clear that if $(T,\Phi_T)$ and $(T',\Phi_{T'})$ are fixed points of $\bbD$, then so is their direct sum $(T \oplus T', \Phi_{T} \oplus \Phi_{T'})$.

\begin{rem}
Our definition of a duality goes back to \cite{QSS-QuadraticForms}. What we call a ``fixed point'' is also called a ``symmetric space'' in, e.g., \cite{Balmer-Witt}. The reason for this terminology will become clear in \Autoref{sec_module_dualities}.
\end{rem}

Now, suppose that $\bbD$ \emph{exchanges} standard and costandard objects, i.e.
\begin{equation}
	\bbD(\nabla(\lambda)) \simeq \Delta(\lambda)
\end{equation}
for all $\lambda \in \Lambda$.

\begin{lem}
Every simple object $L(\lambda)$ is fixed by $\bbD$ and all tilting modules are self-dual.
\end{lem}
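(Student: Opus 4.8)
The plan is to prove the two claims in turn, using only that $\bbD$ is an exact anti-equivalence of $\mathcal{C}$ (as recalled in the excerpt) and the hypothesis $\bbD(\nabla(\lambda))\simeq\Delta(\lambda)$. The first thing I would record is the standard fact that an anti-equivalence takes simple objects to simple objects and exchanges socles with heads: for any $X$, applying $\bbD$ to the inclusion $\Soc X\hookrightarrow X$ yields an epimorphism $\bbD(X)\twoheadrightarrow\bbD(\Soc X)$ onto a semisimple object, and a short check (using that quotients of $\bbD(X)$ correspond under $\bbD$ to subobjects of $X$, and that $\bbD$ preserves semisimplicity) shows this is the \emph{largest} semisimple quotient, so $\bbD(\Soc X)\simeq\Hd\bbD(X)$, and dually.

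For the first claim I would apply this to $X=\nabla(\lambda)$. Since $\Soc\nabla(\lambda)\simeq L(\lambda)$ and, by hypothesis, $\bbD(\nabla(\lambda))\simeq\Delta(\lambda)$ with $\Hd\Delta(\lambda)\simeq L(\lambda)$, we get $\bbD(L(\lambda))\simeq\Hd\bbD(\nabla(\lambda))\simeq L(\lambda)$, so $L(\lambda)$ is self-dual. Fix any isomorphism $\Phi\colon\bbD(L(\lambda))\to L(\lambda)$ and let $\alpha=\alpha_\Phi$ be the attached anti-automorphism of $E_{L(\lambda)}=\End_{\mathcal{C}}(L(\lambda))$. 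Because $\alpha^{-1}(\varphi)=\Phi\circ\bbD(\varphi)\circ\Phi^{-1}$ and $\bbD$ is $K$-linear with $\bbD(\id)=\id$, the map $\alpha$ is a unital $K$-linear algebra anti-automorphism; since $\End_{\mathcal{C}}(L(\lambda))\simeq K$ by \Autoref{ext_lemma}, it is forced to be the identity, in particular an anti-involution. Hence $(L(\lambda),\Phi)$ is a fixed point of $\bbD$ in the sense of \Autoref{defn_duality_fixed}, i.e. $L(\lambda)$ is fixed by $\bbD$.

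For the second claim I would first note $\bbD(\Delta(\lambda))\simeq\bbD(\bbD(\nabla(\lambda)))\simeq\nabla(\lambda)$, so $\bbD$ exchanges standard and costandard objects in both directions, and that the anti-equivalence $\bbD$ gives an isomorphism $\Ext^1_{\mathcal{C}}(X,Y)\simeq\Ext^1_{\mathcal{C}}(\bbD(Y),\bbD(X))$ for all $X,Y$. Thus for a tilting object $T$ we obtain $\Ext^1_{\mathcal{C}}(\bbD(T),\nabla(\lambda))\simeq\Ext^1_{\mathcal{C}}(\Delta(\lambda),T)=0$ and $\Ext^1_{\mathcal{C}}(\Delta(\lambda),\bbD(T))\simeq\Ext^1_{\mathcal{C}}(T,\nabla(\lambda))=0$ for all $\lambda$, so $\bbD(T)$ is again tilting by the criterion in \Autoref{tilting_krull_schmidt}. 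Moreover $\bbD$ preserves indecomposability, and being exact and fixing every simple (first claim) it preserves composition factors and their multiplicities, so $\bbD(T(\lambda))$ is an indecomposable tilting object of highest weight $\lambda$; the bijection in \Autoref{ass_tilting} then forces $\bbD(T(\lambda))\simeq T(\lambda)$. Since every tilting object is a finite direct sum of the $T(\lambda)$ (by Krull--Schmidt, \Autoref{tilting_krull_schmidt}, and \Autoref{ass_tilting}) and $\bbD$ is additive, every tilting object is self-dual.

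I do not expect a serious obstacle. The one point worth spelling out is why self-duality of $L(\lambda)$ is already enough to make it a \emph{fixed} point: this rests on $\End_{\mathcal{C}}(L(\lambda))\simeq K$ collapsing the anti-automorphism $\alpha_\Phi$ to the identity, in contrast with a general (decomposable) tilting object where $\alpha_\Phi$ need not even square to the identity — precisely the phenomenon that motivates \Autoref{defn_duality_fixed}. The remaining ingredients are routine diagram and multiplicity bookkeeping together with the Ext-characterization of tilting objects.
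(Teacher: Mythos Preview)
Your proof is correct and follows essentially the same approach as the paper. The only minor variation is that you establish $\bbD(T)$ is tilting via the $\Ext^1$-characterization of \Autoref{tilting_krull_schmidt}, whereas the paper simply observes that $\bbD$ exchanges $\mathcal{C}^\nabla$ and $\mathcal{C}^\Delta$ directly (since it is exact and swaps $\nabla(\lambda)$ with $\Delta(\lambda)$); both arguments are equivalent, and your identification of $\bbD(L(\lambda))\simeq L(\lambda)$ via $\Soc\nabla(\lambda)\mapsto\Hd\Delta(\lambda)$ makes explicit what the paper leaves implicit.
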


\begin{proof}
Since a duality exchanges heads and socles of objects, it is clear that all simple objects $L(\lambda)$ are self-dual. Since the endomorphism algebra of $L(\lambda)$ is a (commutative) field, this implies that $L(\lambda)$ is fixed by $\bbD$.

It is clear that $\bbD$ maps $\mathcal{C}^\nabla$ into $\mathcal{C}^\Delta$ and vice versa, hence $\bbD$ maps tilting objects to tilting objects. Since $\bbD$ preserves indecomposability of objects, it follows that $\bbD(T(\lambda))$ is isomorphic to $T(\mu)$ for some $\mu$. But, as $\bbD$ fixes the simple objects and $\lbrack T(\lambda) \colon L(\lambda) \rbrack = 1$ and $\lbrack T(\lambda) \colon L(\nu) \rbrack \neq 0$ implies $\nu \leq \lambda$, the same property holds for $T(\mu)$. Thus, we have $\lambda = \mu$ and we conclude that
\begin{equation} \label{tilting_self_dual}
	\bbD(T(\lambda)) \simeq T(\lambda) \;,
\end{equation}
for all $\lambda \in \Lambda$. This shows that all indecomposable tilting objects (and thus all tilting objects) are self-dual.
\end{proof}

This leads us to the following definition.

\begin{defn}
	A \word{standard duality} on a standard category $\mathcal{C}$ is a duality $\bbD$ on~$\mathcal{C}$ that exchanges standard and costandard objects, and for which every indecomposable tilting object is a fixed point (as in \Autoref{defn_duality_fixed}).
\end{defn}

Note that every tilting module is a fixed point under a standard duality.

\begin{rem}
Dualities on highest weight categories are studied in, e.g., \cite{CPS-Duality-in-highest-weight-89, CPS-Stratifying}. In loc. cit. the relation \eqref{duality_relation} for a duality is not required but we feel it is natural. A duality fixing the simple objects is in loc. cit. called a ``strong duality''. In \cite{BrundanStroppel}, the authors introduce the notation of ``Chevalley duality''. This is expected to be closely related to standard duality, at least for finite highest weight categories.
\end{rem}

We assume from now on that $\bbD$ is a standard duality. Let $T \in \mathcal{C}$ be a tilting object. Since $T$ is fixed by $\bbD$, we can choose an isomorphism
\begin{equation}
	\Phi_T \colon \bbD(T) \to T
\end{equation}
such that
\begin{equation} \label{T_fixed_point_equ}
\Phi_T \circ \bbD(\Phi_T^{-1}) \circ \xi_T = \id_T \;.
\end{equation}
In particular, $\bbD$ induces an involution on $E_T \coloneqq \End_{\mathcal{C}}(T)$.

Our goal is to prove that this involution will make specific choices of standard bases from Theorem \ref{end_tilting_is_cellular} into cellular bases. These specific choices are obtained by choosing $G_T$ and $\hat{G}_T$, and then using $\bbD$ to obtain $\hat{F}_T^\lambda$, i.e. $\hat{F}_T^\lambda$ is in a sense the ``dual'' of $\hat{G}_T^\lambda$. To make this precise, let us choose for each $\lambda \in \Lambda$ an isomorphism
\begin{equation}
	\Phi_{T(\lambda)} \colon \bbD( T(\lambda) ) \to T(\lambda)
\end{equation}
such that
\begin{equation} \label{T_lambda_fixed_point_equ}
\Phi_{T(\lambda)} \circ \bbD(\Phi_{T(\lambda)}^{-1}) \circ \xi_{T(\lambda)} = \id_{T(\lambda)} \;.
\end{equation}
Recall that the projection $\pi^\lambda \colon T(\lambda) \twoheadrightarrow \nabla(\lambda)$ is unique up to scalars. Hence, there is a unique subobject $Q(\lambda)$ of $T(\lambda)$ such that the quotient $T(\lambda)/Q(\lambda)$ is isomorphic to $\nabla(\lambda)$. We have an epimorphism
\begin{equation}
	\bbD(T(\lambda)) \overset{\Phi_{T(\lambda)}}{\rightarrow} T(\lambda) \overset{\pi^\lambda}{\rightarrow} \nabla(\lambda)
\end{equation}
with kernel $\Phi_{T(\lambda)}^{-1}(Q(\lambda))$. Moreover, we have an epimorphism $\bbD(i^\lambda) \colon \bbD(T(\lambda)) \to \bbD(\Delta(\lambda)) \simeq \nabla(\lambda)$, the kernel of which must also equal $\Phi_{T(\lambda)}^{-1}(Q(\lambda))$. It follows that $\Phi_{T(\lambda)}$ induces an isomorphism
\begin{equation}
	\overline{\Phi}_{T(\lambda)} \colon \bbD(\Delta(\lambda)) \to \nabla(\lambda)
\end{equation}
making the diagram
\begin{equation} \label{pi_dual_commutativity}
	\begin{tikzcd}
		\bbD(T(\lambda)) \ar[r, "\Phi_{T(\lambda)}"] \ar[d, swap, "\bbD(i^\lambda)"] & T(\lambda) \ar[d, "\pi^\lambda"]\\
		\bbD(\Delta(\lambda)) \ar[r, swap, "\overline{\Phi}_{T(\lambda)}"] & \nabla(\lambda)
	\end{tikzcd}
\end{equation}
commute.

Let us now choose for all $\lambda \in \Lambda$ a basis $G_T^\lambda = \{ g_i^\lambda \mid i \in \mathcal{I}_T^\lambda \}$ of $\Hom_{\mathcal{C}}(\Delta(\lambda),T)$ and a lift $\hat{G}_T^\lambda = \{ \hat{g}_i^\lambda \mid i \in \mathcal{I}_T^\lambda \}$ of $G_T^\lambda$ as in \Autoref{sec_basic_construction}. For $g_i \in G_T^\lambda$ we define
\begin{equation}
	f_i^{\lambda} \coloneqq \overline{\Phi}_{T(\lambda)} \circ \bbD(g_i^\lambda) \circ \Phi_T^{-1} \colon T \to \nabla(\lambda)
\end{equation}
and for $\hat{g}_i^\lambda \in \hat{G}_T^\lambda$ we define
\begin{equation}
\hat{f}_i^\lambda \coloneqq \Phi_{T(\lambda)} \circ \bbD(\hat{g}_i^\lambda) \circ \Phi_T^{-1} \colon T \to T(\lambda) \;.
\end{equation}
Since $\bbD$ is a duality, it is clear that
\begin{equation}
	\bbD(G_T^{\lambda}) \coloneqq \{ f_i^\lambda \mid i \in \mathcal{I}_T^\lambda \}
\end{equation}
is a basis of $\Hom_{\mathcal{C}}(T,\nabla(\lambda))$. It follows from the commutative diagram \eqref{pi_dual_commutativity} that
\begin{equation}
	\bbD(\hat{G}_T^\lambda) \coloneqq \{ \hat{f}_i^\lambda \mid i \in \mathcal{I}_T^\lambda \}
\end{equation}
is a lift of $\bbD(G_T^\lambda)$. Hence, setting
\begin{equation}
	\bbD(\hat{G}_T) \coloneqq \coprod_{\lambda \in \Lambda} \bbD(\hat{G}_T^\lambda)
\end{equation}
we conclude from \Autoref{cellular_newsection_basis_independent} that $\hat{G}_T \bbD(\hat{G}_T)$ is a basis of $E_T$. We can now come to the second part of our main theorem from the introduction.

\begin{thm} \label{cellularity_thm_precise}
Let $\bbD$ be a standard duality on $\mathcal{C}$ and let $T \in \mathcal{C}$ be a tilting object. Then for any choice of $G_T$ and lift $\hat{G}_T$ the standard basis $\hat{G}_T \bbD(\hat{G}_T)$ of $E_T$ is a cellular basis with respect to the involution induced by $\bbD$.
\end{thm}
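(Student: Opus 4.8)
The plan is to verify directly that the basis $\hat{G}_T \bbD(\hat{G}_T) = \{ c_{ij}^\lambda := \hat{g}_i^\lambda \circ \hat{f}_j^\lambda \}$, with $\hat{f}_j^\lambda = \Phi_{T(\lambda)} \circ \bbD(\hat{g}_j^\lambda) \circ \Phi_T^{-1}$, is stable under the anti-involution $\alpha = \alpha_{\Phi_T}$ induced by $\bbD$, in the sense that $\alpha(c_{ij}^\lambda) = c_{ji}^\lambda$. Since we already know from Theorem~\ref{end_tilting_is_cellular} that $\hat{G}_T \bbD(\hat{G}_T)$ is a standard basis with indexing sets $\mathcal{I}_T^\lambda = \mathcal{J}_T^\lambda$, and since the cellular axioms only add the requirement that the basis be flipped by an anti-involution, establishing this single identity $\alpha(c_{ij}^\lambda) = c_{ji}^\lambda$ completes the proof.

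First I would recall that $\alpha(\varphi) = \Phi_T \circ \bbD(\varphi) \circ \Phi_T^{-1}$ and compute $\alpha(c_{ij}^\lambda) = \Phi_T \circ \bbD(\hat{g}_i^\lambda \circ \hat{f}_j^\lambda) \circ \Phi_T^{-1} = \Phi_T \circ \bbD(\hat{f}_j^\lambda) \circ \bbD(\hat{g}_i^\lambda) \circ \Phi_T^{-1}$, using that $\bbD$ is contravariant. Substituting the definition $\hat{f}_j^\lambda = \Phi_{T(\lambda)} \circ \bbD(\hat{g}_j^\lambda) \circ \Phi_T^{-1}$ and applying $\bbD$ to it gives $\bbD(\hat{f}_j^\lambda) = \bbD(\Phi_T^{-1}) \circ \bbD^2(\hat{g}_j^\lambda) \circ \bbD(\Phi_{T(\lambda)})$ (using contravariance and the appropriate identification of $\bbD$ on composites). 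Here the natural isomorphism $\xi$ enters: by naturality, $\bbD^2(\hat{g}_j^\lambda) = \xi_{T} \circ \hat{g}_j^\lambda \circ \xi_{T(\lambda)}^{-1}$ (note the source and target of $\hat{g}_j^\lambda$ are $T(\lambda)$ and $T$). Plugging everything in, $\alpha(c_{ij}^\lambda)$ becomes a long composite; the goal is to collapse it to $\hat{g}_j^\lambda \circ \hat{f}_i^\lambda = \hat{g}_j^\lambda \circ \Phi_{T(\lambda)} \circ \bbD(\hat{g}_i^\lambda) \circ \Phi_T^{-1}$.

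The main obstacle — and the whole reason the notion of ``fixed point'' was introduced — is that making these cancellations work requires precisely the fixed-point identities \eqref{T_fixed_point_equ} and \eqref{T_lambda_fixed_point_equ}, namely $\Phi_T \circ \bbD(\Phi_T^{-1}) \circ \xi_T = \id_T$ and $\Phi_{T(\lambda)} \circ \bbD(\Phi_{T(\lambda)}^{-1}) \circ \xi_{T(\lambda)} = \id_{T(\lambda)}$, together with the relation \eqref{duality_relation} satisfied by $\xi$. Without these, $\alpha^2$ would only be conjugation by the unit $a_{\Phi_T,\xi}$ rather than the identity, and the flip would fail. So the careful step is to track, at each insertion of a $\Phi$ or $\xi$, that the fixed-point conditions allow one to rewrite $\bbD(\Phi_T^{-1}) = \Phi_T^{-1} \circ \xi_T^{-1}$ (and similarly for $T(\lambda)$) so that the central factor $\bbD^2(\hat{g}_j^\lambda) = \xi_T \circ \hat{g}_j^\lambda \circ \xi_{T(\lambda)}^{-1}$ has its $\xi$'s absorbed by the adjacent $\Phi$-terms, leaving exactly $\Phi_T \circ \Phi_T^{-1} \cdot (\cdots) \cdot \Phi_{T(\lambda)} \circ \Phi_{T(\lambda)}^{-1} = \hat{g}_j^\lambda$ surrounded by the remaining factors, which reassemble into $c_{ji}^\lambda$.

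Once the identity $\alpha(c_{ij}^\lambda) = c_{ji}^\lambda$ is verified, I would conclude: $\alpha$ is an anti-involution on $E_T$ (already established in the discussion preceding \Autoref{defn_duality_fixed}, using that $T$, being a direct sum of indecomposable tilting objects each of which is a fixed point under the standard duality, is itself a fixed point), the basis $\hat{G}_T \bbD(\hat{G}_T)$ is a standard basis with $\mathcal{I}_T^\lambda = \mathcal{J}_T^\lambda$ by \Autoref{end_tilting_is_cellular}, and the flip condition holds by the computation above. Hence all axioms of a cellular basis are satisfied and $E_T$ is a cellular algebra. A minor bookkeeping point to double-check is that the choice of $\Phi_{T(\lambda)}$ used to define $\hat{f}_j^\lambda$ is consistent with the decomposition of $T$ and the choice of $\Phi_T$; but since a direct sum of fixed points is a fixed point with the direct-sum isomorphism, one may arrange $\Phi_T$ to be built from the $\Phi_{T(\lambda)}$'s on the tilting summands of $T$, and this compatibility is exactly what makes the cancellation in the central step go through uniformly.
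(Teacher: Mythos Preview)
Your proposal is correct and follows essentially the same approach as the paper: both reduce to verifying $\alpha(c_{ij}^\lambda) = c_{ji}^\lambda$ by expanding via contravariance of $\bbD$, rewriting $\bbD^2(\hat{g}_j^\lambda)$ using naturality of $\xi$, and collapsing the resulting composite using the fixed-point identities \eqref{T_fixed_point_equ} and \eqref{T_lambda_fixed_point_equ}. One small remark: your final ``bookkeeping'' concern about arranging $\Phi_T$ to be compatible with the $\Phi_{T(\lambda)}$'s is unnecessary---the computation uses the fixed-point identity for $T$ and for each $T(\lambda)$ independently, so any choices of $\Phi_T$ and $\Phi_{T(\lambda)}$ satisfying their respective fixed-point conditions suffice, with no compatibility required between them.
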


\begin{proof}
As usual, let $c_{ij}^\lambda \coloneqq \hat{g}_i^\lambda \hat{f}_j^\lambda$. All that remains to be proven is that
\begin{equation}
	\alpha(c_{ij}^\lambda) = c_{ji}^\lambda \;,
\end{equation}
where $\alpha$ is the involution on $E_T$ induced by $\bbD$ from \eqref{alpha_definition}. The right-hand side is equal to
\begin{equation}
c_{ji}^\lambda = \hat{g}_j^\lambda \circ \hat{f}_i^\lambda = \hat{g}_j^\lambda \circ \Phi_{T(\lambda)} \circ \bbD(\hat{g}_i^\lambda) \circ \Phi_T^{-1} \;.
\end{equation}
The left-hand side is equal to
\begin{equation} \label{cellularity_thm_precise_eq1}
  \begin{aligned}
\alpha(c_{ij}^\lambda) &= \Phi_T \circ \bbD(c_{ij}^\lambda) \circ \Phi_T^{-1}  = \Phi_T \circ \bbD\left( \hat{g}_i^\lambda \circ \Phi_{T(\lambda)} \circ \bbD(\hat{g}_j^\lambda) \circ \Phi_T^{-1} \right) \circ \Phi_T^{-1} \\
&= \Phi_T \circ \bbD(\Phi_T^{-1}) \circ \bbD^2(\hat{g}_j^\lambda) \circ \bbD(\Phi_{T(\lambda)}) \circ \bbD(\hat{g}_i^\lambda) \circ \Phi_T^{-1} \;.
\end{aligned}
\end{equation}
From the naturality of $\xi \colon \id_{\mathcal{C}} \to \bbD^2$ applied to $\hat{g}_j^\lambda$ we obtain a commutative diagram
\begin{equation}
\begin{tikzcd}
T(\lambda) \ar[r, "\xi_{T(\lambda)}"] \ar[d, swap,"\hat{g}_j^\lambda"]& \bbD^2(T(\lambda)) \ar[d, "\bbD^2(\hat{g}_j^\lambda)"] \\
T \ar[r, swap, "\xi_T"] & \bbD^2(T)
\end{tikzcd}
\end{equation}
i.e.
\begin{equation}
\bbD^2(\hat{g}_j^\lambda) = \xi_T \circ \hat{g}_j^\lambda \circ \xi_{T(\lambda)}^{-1} \;.
\end{equation}
Hence, from \eqref{cellularity_thm_precise_eq1} we get
\begin{align}
\alpha(c_{ij}^\lambda) & = \underbrace{\Phi_T \circ \bbD(\Phi_T^{-1}) \circ \xi_T}_{=\id_{T}} \circ \hat{g}_j^\lambda \circ \underbrace{\xi_{T(\lambda)}^{-1} \circ \bbD(\Phi_{T(\lambda)})}_{=\Phi_{T(\lambda)}} \circ \bbD(\hat{g}_i^\lambda) \circ \Phi_T^{-1} = c_{ji}^\lambda \;,
\end{align}
where the equalities under the braces come from the assumption that $(T,\Phi_T)$ and $(T(\lambda), \Phi_{T(\lambda)})$ are fixed points of $\bbD$, see \eqref{T_lambda_fixed_point_equ} and \eqref{T_fixed_point_equ}.
\end{proof}

\begin{rem}
\Autoref{cellularity_thm_precise} implies that the full subcategory $\mathcal{C}^t$ of tilting objects in $\mathcal{C}$ is a (strictly) object-adapted cellular category in the sense of Elias--Lauda \cite{EliasLauda}, see also \cite[\S11]{EMTW}.
\end{rem}

\subsection{Module dualities and examples of standard dualities} \label{sec_module_dualities}
On module categories there are natural dualities that we will call ``module dualities'' in this paper. For such dualities, being a self-dual object is related to being equipped with a non-degenerate bilinear form while being a fixed point is related to being equipped with a \emph{symmetric} non-degenerate bilinear form. This connection helps in verifying that a self-dual module is a fixed point—and showing that a duality is a standard duality—in examples.

Let $\cat{Mod}{K}$ be the category of $K$-vector spaces and consider the contravariant functor
\begin{equation} \label{canonical_duality}
	(-)^* \coloneqq \Hom_K(-,K) \colon \cat{Mod}{K} \to \cat{Mod}{K} \;.
\end{equation}
There is a natural monomorphism
\begin{equation} \label{xi_def}
\begin{array}{rcl}
	\xi_X \colon X & \longrightarrow & X^{**} \\
	x & \longmapsto & \mathrm{ev}_x, \ \mathrm{ev}_x(f) = f(x) \;.
\end{array}
\end{equation}
On the category $\cat{mod}{K}$ of finite-dimensional vector spaces, $\xi$ is an isomorphism and $(-)^*$ is a duality.

Let $A$ be a $K$-algebra and let $\cat{Mod}{A}$, respectively $\cat{Mod}{}{A}$, be the category of left, respectively right, $A$-modules. If $X \in \cat{Mod}{A}$, then $X^* = \Hom_K(X,K)$ is naturally a \emph{right} $A$-module with action $(fa)(x) \coloneqq f(ax)$ for $a \in A$, $f \in X^*$, and $x \in X$. The functor \eqref{canonical_duality} thus yields a contravariant functor
\begin{equation}
	(-)^* \colon \cat{Mod}{A} \to \cat{Mod}{}{A} \;.
\end{equation}

To transform right modules back to left modules we consider an algebra anti-involution $\tau \colon A \to A$. If $X \in \cat{Mod}{}{A}$, then let $X^\tau$ be the \emph{left} $A$-module with the same underlying vector space as $X$ but with $A$-action $ax \coloneqq x \tau(a)$. This defines a \emph{covariant} equivalence
\begin{equation}
	(-)^\tau \colon \cat{Mod}{}{A} \to \cat{Mod}{A} \;.
\end{equation}
Combined, we thus obtain a contravariant functor
\begin{equation}
	(-)^\tau \circ (-)^* \colon \cat{Mod}{A} \to \cat{Mod}{A} \;.
\end{equation}

For the rest of this section we assume that $\mathcal{C}$ is a $K$-linear subcategory of $\cat{Mod}{A}$.

\begin{defn}
A \word{module duality} on $\mathcal{C}$ is a duality of the form
\begin{equation}
\bbD_{\tau,\vee} \coloneqq (-)^\tau \circ (-)^\vee \colon \mathcal{C} \to \mathcal{C}
\end{equation}
for a $K$-linear subfunctor $(-)^\vee$ of $(-)^* \colon \mathcal{C} \to \cat{Mod}{}{A}$ such that $(X^\vee)^\tau \in \mathcal{C}$ for all $X \in \mathcal{C}$, and the duality isomorphism is the morphism $\xi$ of \eqref{xi_def}.
\end{defn}

\begin{ex}
Let $\mathcal{C} = \cat{mod}{A}$ be the category of finite-dimensional $A$-modules and let $(-)^\vee = (-)^*$. Then $\bbD_{\tau,\vee}$ is a module duality on $\cat{mod}{A}$.
\end{ex}

As we shall see below, there are natural infinite-dimensional examples as well which necessitates our more general definition.

\begin{lem} \label{self_dual_form}
If $X \in \mathcal{C}$ is self-dual under $\bbD_{\tau,\vee}$, then $X$ carries a non-degenerate bilinear form $\langle -, - \rangle$ which is moreover \emph{associative}, i.e. $\langle a x, y \rangle = \langle x, \tau(a)y \rangle$
for $x,y \in X$ and $a \in A$.
\end{lem}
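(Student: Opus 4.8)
The plan is to use the self-duality isomorphism to produce a perfect pairing, and then translate the $A$-equivariance of $\bbD_{\tau,\vee}$ into associativity of the pairing. Concretely, since $X$ is self-dual there is an $A$-module isomorphism $\Phi \colon \bbD_{\tau,\vee}(X) = (X^\vee)^\tau \to X$. Because $(-)^\vee$ is a subfunctor of $(-)^* = \Hom_K(-,K)$, the underlying vector space of $X^\vee$ (and hence of $(X^\vee)^\tau$) is a subspace $X^\vee \subseteq X^*$. First I would define the bilinear form $\langle -,-\rangle \colon X \times X \to K$ by
\begin{equation}
\langle x, y \rangle \coloneqq \big( \Phi^{-1}(x) \big)(y)
\end{equation}
for $x,y \in X$, where on the right we view $\Phi^{-1}(x) \in (X^\vee)^\tau \subseteq X^*$ as a $K$-linear functional on $X$ and evaluate it at $y$. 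This is visibly $K$-bilinear.

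Next I would check non-degeneracy. In one variable this is immediate: if $\langle x, y\rangle = 0$ for all $y$, then $\Phi^{-1}(x)$ is the zero functional, so $x = 0$ since $\Phi^{-1}$ is injective. In the other variable, suppose $\langle x, y \rangle = 0$ for all $x \in X$; then $f(y) = 0$ for every $f \in \Phi^{-1}(X) = (X^\vee)^\tau = X^\vee$ (as a set of functionals). Here the point is that $X^\vee$, being the image under the duality of the full object $X$, must be a ``point-separating'' subspace of $X^*$ — this is exactly the content of $\xi_X \colon X \to (X^\vee)^\vee \hookrightarrow (X^*)^*$ being injective (indeed an isomorphism, as $\bbD_{\tau,\vee}$ is a duality with duality isomorphism $\xi$). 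So $y = 0$, giving non-degeneracy on the right as well.

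Finally I would verify associativity. The key is that $\Phi$ is a morphism of \emph{left} $A$-modules, where the left action on $(X^\vee)^\tau$ is $(a \cdot f)(z) = f(\tau(a) z)$ by definition of $(-)^\tau$ applied to the right module $X^\vee \subseteq X^*$ (recall the right action on $X^*$ is $(fa)(z) = f(az)$, so $(a\cdot f)(z) = (f\tau(a))(z) = f(\tau(a)z)$). Since $\Phi^{-1}$ is $A$-linear, $\Phi^{-1}(ax) = a\cdot \Phi^{-1}(x)$, and hence
\begin{equation}
\langle a x, y \rangle = \big(\Phi^{-1}(ax)\big)(y) = \big(a \cdot \Phi^{-1}(x)\big)(y) = \big(\Phi^{-1}(x)\big)(\tau(a) y) = \langle x, \tau(a) y \rangle .
\end{equation}
This gives the associativity claim. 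The only genuinely delicate point — the one I expect to require a little care — is the right-hand non-degeneracy, i.e. arguing that the subspace $X^\vee \subseteq X^*$ separates points of $X$; but this follows formally from $\xi_X$ being an isomorphism, which holds because $\bbD_{\tau,\vee}$ is a duality with $\xi$ as its defining natural isomorphism. Everything else is bookkeeping with the definitions of $(-)^\vee$, $(-)^\tau$, and the $A$-actions involved. $\qed$
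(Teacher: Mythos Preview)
Your proof is correct and follows essentially the same approach as the paper: both define the form via the self-duality isomorphism (your $\Phi^{-1}$ is the paper's $\Psi_X$) and derive associativity directly from $A$-linearity of this map together with the definition of the $\tau$-twisted action. You actually go slightly further than the paper by also verifying non-degeneracy in the second variable via injectivity of $\xi_X$, whereas the paper only checks the first variable; your extra argument is correct and a welcome addition in the potentially infinite-dimensional setting.
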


\begin{proof}
Since $X$ is self-dual, there is an isomorphism $\Psi_X \colon X \to \bbD_{\tau,\vee}(X)$ in $\mathcal{C}$, and this induces a bilinear form on $X$ via
\begin{equation}
	\langle x,y \rangle \coloneqq \Psi_X(x)(y) \;.
\end{equation}
This form is non-degenerate since $\langle x,y \rangle = 0$ for all $y \in X$ means that $\Psi_X(x) = 0$ and hence $x =0$ because $\Psi_X$ is an isomorphism. The associativity follows from the fact that $\Psi_X$ is a morphism of $A$-modules:
\[
\langle ax,y \rangle = \Psi_X(ax)(y) = (\Psi_X(x) \tau(a) )(y) = \Psi_X(x)(\tau(a)y) = \langle x,\tau(a)y \rangle \;. \qedhere
\]
\end{proof}

\begin{lem} \label{fixed_symm_form}
	Let $X \in \mathcal{C}$.
	\begin{enumerate}
		\item If $X$ is a fixed point of $\bbD_{\tau,\vee}$, then $X$ carries a symmetric associative non-degenerate bilinear form.
		\item Conversely, if $X$ is equipped with a symmetric associative non-degenerate bilinear form such that the corresponding $A$-module morphism $\Psi_X \colon X \to (X^*)^\tau$ maps into $\bbD_{\tau,\vee}(X)$ and is an isomorphism in $\mathcal{C}$, then $X$ is a fixed point of $\bbD_{\tau,\vee}$.
	\end{enumerate}
\end{lem}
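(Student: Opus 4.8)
\textbf{Proof plan for \Autoref{fixed_symm_form}.}

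The plan is to unwind the definitions on both sides. Recall that a module duality has duality isomorphism $\xi$ given explicitly by the evaluation map \eqref{xi_def}, so the fixed-point condition for $(X,\Phi_X)$ reads $\Phi_X \circ \bbD_{\tau,\vee}(\Phi_X^{-1}) \circ \xi_X = \id_X$. The key observation is that an isomorphism $\Phi_X \colon \bbD_{\tau,\vee}(X) \to X$ corresponds, via inversion, to an isomorphism $\Psi_X \coloneqq \Phi_X^{-1} \colon X \to \bbD_{\tau,\vee}(X) \subseteq (X^*)^\tau$, and $\Psi_X$ is exactly the morphism attached to the bilinear form $\langle x,y\rangle = \Psi_X(x)(y)$ as in \Autoref{self_dual_form}. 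So the real content is a dictionary between (i) the identity $\Phi_X \circ \bbD_{\tau,\vee}(\Phi_X^{-1}) \circ \xi_X = \id_X$, and (ii) the symmetry $\langle x,y\rangle = \langle y,x\rangle$ of the associated form.

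For part (1): suppose $X$ is a fixed point, with $\Phi_X$ satisfying the fixed-point equation. By \Autoref{self_dual_form} (applied to $\Psi_X = \Phi_X^{-1}$) we already get a non-degenerate associative bilinear form, so it only remains to check symmetry. I would rewrite the fixed-point equation in terms of $\Psi_X$. First I would record how $\bbD_{\tau,\vee}$ acts on a morphism: if $h \colon Y \to Z$ then $\bbD_{\tau,\vee}(h) \colon \bbD_{\tau,\vee}(Z) \to \bbD_{\tau,\vee}(Y)$ is the map $g \mapsto g \circ h$ (precomposition, since dualizing is contravariant on the vector-space level and the $(-)^\tau$ twist does not change underlying linear maps). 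Then $\bbD_{\tau,\vee}(\Phi_X^{-1}) = \bbD_{\tau,\vee}(\Psi_X)$ sends $g \in \bbD_{\tau,\vee}^2(X)$ to $g \circ \Psi_X$. Chasing an element $x \in X$ through $\xi_X$, then $\bbD_{\tau,\vee}(\Psi_X)$, then $\Phi_X = \Psi_X^{-1}$, and setting the result equal to $x$, one extracts (after evaluating the intermediate functional on an arbitrary $y \in X$) precisely the identity $\Psi_X(x)(y) = \Psi_X(y)(x)$ for all $x,y$, i.e. $\langle x,y\rangle = \langle y,x\rangle$. That is the symmetry claim.

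For part (2): this is the converse and is essentially the same computation run backwards. Given a symmetric associative non-degenerate form with associated $\Psi_X \colon X \to \bbD_{\tau,\vee}(X)$ an isomorphism in $\mathcal{C}$, set $\Phi_X \coloneqq \Psi_X^{-1} \colon \bbD_{\tau,\vee}(X) \to X$; associativity of the form guarantees $\Psi_X$ (hence $\Phi_X$) is $A$-linear, and the hypothesis that it lands in $\bbD_{\tau,\vee}(X)$ and is an isomorphism in $\mathcal{C}$ makes it a legitimate choice of $\Phi_X$ exhibiting $X$ as self-dual. Then one verifies $\Phi_X \circ \bbD_{\tau,\vee}(\Phi_X^{-1}) \circ \xi_X = \id_X$ by evaluating on $x \in X$ and pairing the output against arbitrary $y \in X$: the left-hand side produces $\Psi_X(y)(x)$ while the right-hand side gives $\Psi_X(x)(y)$, and these agree precisely because the form is symmetric. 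Hence $(X,\Phi_X)$ is a fixed point.

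The only mild subtlety — and the step I would be most careful about — is keeping the variance and the $(-)^\tau$-twist straight when computing $\bbD_{\tau,\vee}$ on morphisms and when identifying $\bbD_{\tau,\vee}^2(X)$ with $X^{**}$ compatibly with $\xi_X$; once that bookkeeping is pinned down, both directions reduce to the single element-level identity $\Psi_X(x)(y) = \Psi_X(y)(x) \Leftrightarrow \langle x,y\rangle = \langle y,x\rangle$, and there is no further difficulty. In particular no new categorical input beyond \Autoref{self_dual_form} and the explicit form of $\xi$ is needed.
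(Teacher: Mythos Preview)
Your proposal is correct and follows essentially the same approach as the paper: set $\Psi_X \coloneqq \Phi_X^{-1}$, rewrite the fixed-point equation as $\bbD(\Psi_X) \circ \xi_X = \Psi_X$, and then use the explicit description of $\xi_X$ as evaluation together with the fact that $\bbD(\Psi_X)$ acts by precomposition to reduce both directions to the single identity $\Psi_X(x)(y) = \Psi_X(y)(x)$. The paper presents this as one computation covering both statements simultaneously, but the content is identical to what you outlined.
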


\begin{proof}
	Let $\bbD \coloneqq \bbD_{\tau,\vee}$. Then $X$ being a fixed point of $\bbD$ means there is an isomorphism $\Phi_X \colon \bbD(X) \to X$ such that $\Phi_X \circ \bbD(\Phi_X^{-1}) \circ \xi_X = \id_X$. Setting $\Psi_X \coloneqq \Phi_X^{-1} \colon X \to \bbD(X)$, we have $\Psi_X^{-1} \circ \bbD(\Psi_X) \circ \xi_X = \id_X$, i.e. $\bbD(\Psi_X) \circ \xi_X = \Psi_X$. By definition of $\xi$ in \eqref{xi_def} this means $\bbD(\Psi_X)(\mathrm{ev}_x) = \Psi_X(x)$ for all $x \in X$. Since $(-)^\vee$ is a subfunctor of $(-)^*$, we have $\bbD(\Psi_X)(\mathrm{ev}_x) = \mathrm{ev}_x \circ \Psi_X$ and therefore
	\begin{equation}
		\Psi_X(y)(x) = \mathrm{ev}_x \circ \Psi_X(y) = \Psi_X(x)(y) \;,
	\end{equation}
  where the first equality is simply by definition of $\mathrm{ev}_x$. This implies the two statements in the lemma.
\end{proof}

\begin{rem} \label{fixed_point_strategy}
In the examples below, we will use the following general strategy for showing that a self-dual object $X \in \mathcal{C}$ is actually a fixed point of $\bbD \coloneqq \bbD_{\tau,\vee}$. This strategy is motivated by the proof of \cite[Lemma 6.6]{Fiebig}. Since $X$ is self-dual, there is an isomorphism $\Psi_X \colon X \to \bbD(X)$ in $\mathcal{C}$. Let $\langle -, - \rangle$ be the corresponding associative non-degenerate bilinear form on $X$. We define the \word{symmetrization} of $\langle -,- \rangle$ to be the bilinear form $\langle -, - \rangle'$ on $X$ given by
\begin{equation}\label{eq:SYMofform}
	\langle x,y \rangle' \coloneqq \langle x,y \rangle + \langle y,x \rangle \;.
\end{equation}
This is an associative \emph{symmetric} bilinear form.
Due to the associativity, the corresponding map $\Psi_X' \colon X \to (X^*)^\tau$, $x \mapsto \langle x,- \rangle'$, is a morphism of $A$-modules. We will now make the following assumptions:
\begin{enumerate}
\item $\langle x, - \rangle' \in X^\vee$ for all $x \in X$ and $\Psi_X' \colon X \to \bbD(X)$ is a morphism in $\mathcal{C}$;
\item $X \in \mathcal{C}$ is indecomposable and of finite length;
\item the endomorphism $\Psi_X^{-1} \circ \Psi_X'$ of $X$ is not nilpotent.
\end{enumerate}
These assumptions, together with the Fitting lemma \cite[Lemma 5.3]{Krause-KS}, imply that $\Psi_X^{-1} \circ \Psi_X'$, and thus $\Psi_X'$, is an isomorphism in $\mathcal{C}$. Hence, $X$ is a fixed point of $\bbD$ by \Autoref{fixed_symm_form}. We note that, because $\langle x,x \rangle' = 2 \langle x,x \rangle$, it is necessary that the characteristic of $K$ is not equal to 2.
\end{rem}

\begin{ex} \label{duality_uqmod}
Consider the category $\cat{mod}{U_q}$ from \Autoref{uq_mod_examples}. Lusztig's integral form of the quantum group is a Hopf subalgebra of the generic quantum group, hence $U_q$ inherits a Hopf algebra structure as well, see \cite[Proposition 4.8]{Lusztig}. The antipode $S$ on $U_q$ is an anti-involution acting on the standard generators by
\begin{equation}
S(E_i) = -K_i^{-1} E_i \;, \quad S(F_i) = -F_i K_i \;, \quad S(K_i) = K_i^{-1} \;.
\end{equation}
The usual involution $\omega$, as in \cite[\S4.6]{Jantzen}, on the generic quantum group induces an involution on $U_q$, acting on the standard generators by
\begin{equation}
	\omega(E_i) = F_i \;, \quad \omega(F_i) = E_i \;, \quad \omega(K_i) = K_i \;.
\end{equation}
We thus obtain an anti-involution
\begin{equation}
	\tau \coloneqq \omega \circ S
\end{equation}
on $U_q$ with
\begin{equation}
\tau(E_i) = -K_i F_i \;, \quad \tau(F_i) = - E_i K_i^{-1} \;, \quad \tau(K_i) = K_i \;.
\end{equation}
Let $\bbD \coloneqq \bbD_{\tau,*}$ be the corresponding module duality. It preserves type-1 modules, hence is a duality on $\cat{mod}{U_q}$. It is shown in \cite{AST,AST-Notes} that~$\bbD$ exchanges standard and costandard modules. Assuming that the characteristic of the field $K$ is not equal to~2, we claim that $\bbD$ fixes the indecomposable tilting modules $T(\lambda)$ and is thus a standard duality on $\cat{mod}{U_q}$.

We will employ the strategy from \Autoref{fixed_point_strategy}. The first two assumptions in \Autoref{fixed_point_strategy} are clear, so all that remains to prove is that $\Psi^{-1} \circ \Psi'$ is not nilpotent. This follows from the fact that $U_q$-modules have weight spaces and that the highest weight space $T(\lambda)_\lambda$ of $T(\lambda)$ is 1-dimensional, see \cite{AST}. Namely, since $\Psi$ and $\Psi'$ are morphisms in
$\cat{mod}{U_q}$, we can consider their restrictions $\Psi_\lambda$ and $\Psi_\lambda'$ to the respective $\lambda$-weight space (here, we do not mean our new categorical version of restriction from \eqref{cellular_newsection_key_modification} but ``classical'' restriction to weight spaces). Let $0 \neq z \in T(\lambda)_\lambda$. Since $\Psi$ is an isomorphism, so is $\Psi_\lambda$ and therefore $\langle z,z \rangle \neq 0$. Hence, $\langle z,z \rangle' = 2 \langle z,z \rangle \neq 0$. This shows that $\Psi'_\lambda(z) \neq 0$. Hence, $(\Psi^{-1} \circ \Psi')_\lambda = \Psi^{-1}_\lambda \circ \Psi_\lambda'$ is a non-zero morphism $T(\lambda)_\lambda \to T(\lambda)_\lambda$. Since $T(\lambda)_\lambda$ is 1-dimensional, it follows that $z$ is mapped to a non-zero scalar multiple of $z$. This implies that $\Psi^{-1} \circ \Psi'$ cannot be nilpotent.
\end{ex}

\begin{ex}
The argument given in \Autoref{duality_uqmod} also shows that the usual duality $\bbD_{\tau,\vee}$ from \cite[\S3.2]{HumphreysBGG} on the Bernstein--Gelfand--Gelfand category $\mathcal{O}$ of a finite-dimensional complex semisimple Lie algebra is a standard duality. Working directly with the graded duality on (infinite-dimensional) modules avoids having to make use of the fact that category $\mc{O}$ is equivalent to representations of a certain finite-dimensional algebra.
\end{ex}

\begin{ex} \label{hecke_example}
	Let $W$ be a finite complex reflection group \cite{ShephardTodd}. We fix a map $\mathbf{c} \colon \mathcal{S} \to \CC$ from the set $\mathcal{S}$ of reflections in $W$ to the complex numbers which is invariant under $W$-conjugation, and let $H_{1,\mathbf{c}}(W)$ be the rational Cherednik algebra \cite{EG} for $W$ at parameters $\mathbf{c}$ and $t=1$. The algebra $H_{1,\mathbf{c}}(W)$ is an infinite-dimensional $\mathbb{C}$-algebra admitting a triangular decomposition and a corresponding category $\mathcal{O}^{\mathrm{ln}}$ of locally nilpotent modules, as defined in \cite{GGOR}. Let $\mr{eu} \in H_{1,\mathbf{c}}(W)$ be the Euler element as in \cite{GGOR}. Given $M \in \mathcal{O}^{\mathrm{ln}}$ let $M_\alpha$ for $\alpha \in \CC$ be the generalized eigenspace for the action of $\mathrm{eu}$ on $M$ with generalized eigenvalue $\alpha$. Let $\mathcal{O}$ be the full subcategory of $\mathcal{O}^{\mathrm{ln}}$ consisting of modules whose generalized eigenspaces are finite-dimensional and which are equal to the sum of their generalized eigenspaces; this sum is nessecarily direct. By \cite[Lemma~2.22]{GGOR} a module $M$ of $\mathcal{O}^{\mathrm{ln}}$ belongs to $\mathcal{O}$ if and only if it is finitely generated over $H_{1,\mathbf{c}}(W)$.

	 In \cite[Theorem 2.19]{GGOR} it is proven that $\mathcal{O}$ is a highest weight category with standard and costandard objects, and with simple objects parameterized by the set $\Irr W$ of irreducible complex $W$-modules. Hence, $\mathcal{O}$ is a standard category by \Autoref{finite_hwc_has_everything}. By \cite[Theorem 5.15]{GGOR}, there exists a projective object $P \in \mathcal{O}$ and an isomorphism
	\begin{equation}
		\mc{H}_{\mathbf{q}}(W) \simeq \End_{H_{1,\bc}(W)}(P)^{\op}\;,
	\end{equation}
	where $\mc{H}_{\mathbf{q}}(W)$ is the Hecke algebra \cite{Malle-Rouquier-2003} for $W$ at a parameter $\mathbf{q}$ determined from $\mathbf{c}$. This  Hecke algebra is a generalization to complex reflection groups of the Hecke algebra associated to a (finite) Coxeter group, see \cite{GeckPfeiffer}. The object $P$ is a tilting object in $\mc{O}$ by \cite[Proposition 5.21]{GGOR}. We conclude from \Autoref{end_tilting_is_cellular} that $\mc{H}_{\mathbf{q}}(W)$ admits a standard basis coming from ``factorizing endomorphisms of $P \in \mathcal{O}$ through indecomposable tilting objects of $\mathcal{O}$''.	In particular, $\mc{H}_{\mathbf{q}}(W)$ admits associated cell modules as mentioned in \Autoref{rem_on_cell_modules}. The parameter $\mathbf{q}$ is expressed as the ``exponential'' of $\mathbf{c}$ -- in this way we obtain all parameters $\mathbf{q}$ for the Hecke algebra.

	Now, suppose that $W$ is a \emph{real} reflection group; equivalently, a finite Coxeter group. In this case $\mathcal{H}_{\mathbf{q}}(W)$ is the usual Hecke algebra associated to $W$. As explained in \cite[Remark~4.9]{GGOR}, category $\mathcal{O}$ admits a duality $\bbD \coloneqq \mathbb{D}_{\tau,\vee}$ exchanging standard and costandard modules. Here $M^{\vee}$ consists of all vectors in $(M^*)^{\tau}$ that are locally nilpotent for $\mathbb{C}[\h^*]_+$. It is important to note that this is the same as all vectors in $(M^*)^{\tau}$ that vanish on all but finitely many weight spaces in $M$. We claim that $\mathbb{D}$ is a standard duality on category $\mathcal{O}$. To prove this, we will again use the strategy from \Autoref{fixed_point_strategy}. As explained in \cite[2.4.1]{GGOR}, $\mathcal{O}$ admits a $\mathbb{C}$-graded lift $\widetilde{\mathcal{O}}$ where the grading is given by the generalised eigenspaces under the action of the Euler element $\mr{eu} \in H_{1,c}(W)$. Let $\lambda \in \Irr W$. If $\mathrm{eu}$ acts on $1 \otimes \lambda \subset \Delta(\lambda)$ as multiplication by $c_{\lambda} \in \mathbb{C}$ then condition (\ref{ass_tilting:cond1}) in \Autoref{ass_tilting} implies that the $c_{\lambda}$-weight space of $T(\lambda)$ is equal to $\lambda$ as a $W$-module. Let $\langle -, -  \rangle$, $\langle -,- \rangle'$, $\Psi$ and $\Psi'$ be as in \Autoref{fixed_point_strategy} for $X=T(\lambda)$. Fix a $W$-invariant non-degenerate symmetric bilinear form $( - , - )$ on $\h$. The form induces a $W$-equivariant isomorphism $F \colon \h \to \h^*$. If $y_1, \ds, y_n$ is an orthonormal basis of $\h$ with respect to $( - , - )$, then $\{ x_i := F(y_i) \}_{i}$ and $\{ y_i \}_i$ are dual basis of $\h^*$ and $\h$. As in \cite[\S 4.7]{Baby}, let $\tau$ be the anti-involution of $H_{1,c}(W)$ defined by $\tau(x_i) = y_i, \tau(y_i) = x_i$ and $\tau(w) = w^{-1}$. Applying the formula \cite[page 285]{BEG}, we have
 \begin{equation} \label{tau_euler_equ}
 \tau(\mathrm{eu}) = \tau\left( \frac{1}{2} \sum_{i = 1}^n x_i y_i + y_i x_i \right) = \mathrm{eu}.
 \end{equation}
This implies that $\langle - , - \rangle$ and $\langle - , - \rangle'$ restrict to $W$-associative bilinear forms on each weight space $T(\lambda)_a$, and the weight spaces $T(\lambda)_a$ and $T(\lambda)_b$ are orthogonal for $a \neq b$. In particular, we deduce that $\langle x, - \rangle'$ belongs to $\mathbb{D}(T(\lambda))$ for all $x \in T(\lambda)$. All that remains to be proven is that $\Psi^{-1} \circ \Psi'$ is not nilpotent. We can consider the restrictions $\Psi_{c_{\lambda}}$ and $\Psi_{c_{\lambda}}'$, of $\Psi$ and $\Psi'$ respectively, to the $c_{\lambda}$-weight space. If the associative form $\langle - , - \rangle'$ on $T(\lambda)_{c_{\lambda}} = \lambda$ is degenerate, then it must in fact be zero because $\lambda$ is an irreducible $W$-module. Equation \eqref{eq:SYMofform} shows that this happens precisely when $\langle - , - \rangle$ is a $W$-invariant symplectic form on $\lambda$. Since $W$ is a real reflection group, $\lambda$ also admits a non-degenerate symmetric $W$-invariant bilinear form. Since we are working over the complex numbers, \Autoref{lem:easyinvbilinear} below implies that this is a contradiction. Thus, $\langle - , - \rangle'$ is non-degenerate on $T(\lambda)_{c_{\lambda}}$ and $\Psi_{c_{\lambda}}'$ is an isomorphism. Hence, so too is $(\Psi \circ \Psi')_{c_{\lambda}} = \Psi_{c_{\lambda}} \circ \Psi_{c_{\lambda}}'$. In particular, $\Psi \circ \Psi'$ cannot be nilpotent.
\end{ex}

\begin{lem} \label{lem:easyinvbilinear}
If $W$ is a finite group and $\lambda$ an irreducible complex $W$-module, then there can be (up to scalar) at most one non-zero associative bilinear form on $\lambda$.
\end{lem}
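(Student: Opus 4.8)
The plan is to reduce the statement to Schur's lemma by translating ``associative bilinear form'' into ``$W$-module homomorphism $\lambda \to \lambda^*$''. Recall that here associativity means $\langle wx, y \rangle = \langle x, w^{-1}y \rangle$ for all $w \in W$ (since the relevant anti-involution sends $w \mapsto w^{-1}$), i.e. the form is $W$-invariant.

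First I would set up the standard dictionary. A bilinear form $\langle -,- \rangle$ on $\lambda$ is the same datum as the $\C$-linear map $\Psi \colon \lambda \to \lambda^*$ given by $\Psi(x) = \langle x, - \rangle$, where $\lambda^* = \Hom_{\C}(\lambda,\C)$ carries the contragredient action $(w \cdot f)(y) = f(w^{-1}y)$. Unwinding, the associativity/invariance of $\langle -,-\rangle$ is equivalent to $\Psi(wx) = w \cdot \Psi(x)$ for all $w \in W$, i.e. to $\Psi \in \Hom_W(\lambda, \lambda^*)$. Since $\langle -,- \rangle \mapsto \Psi$ is visibly a $\C$-linear bijection, the space of associative bilinear forms on $\lambda$ is isomorphic to $\Hom_W(\lambda,\lambda^*)$, and it suffices to show the latter is at most one-dimensional.

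Then I would invoke Schur's lemma over the algebraically closed field $\C$: both $\lambda$ and $\lambda^*$ are simple $\C W$-modules, and $\End_W(\lambda) \cong \C$, so any two nonzero elements $\varphi, \psi \in \Hom_W(\lambda,\lambda^*)$ are isomorphisms with $\psi^{-1}\varphi \in \End_W(\lambda) = \C\cdot \id_\lambda$; hence $\varphi$ is a scalar multiple of $\psi$. (If there is no nonzero such map, the space is zero.) In either case $\dim_\C \Hom_W(\lambda,\lambda^*) \le 1$, which is exactly the claim.

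There is essentially no obstacle here — this is a textbook consequence of Schur's lemma once the dictionary is in place; the only point requiring a moment of care is the routine verification that invariance of the form corresponds precisely to $W$-equivariance of $\Psi$ with respect to the contragredient action, and that irreducibility of $\lambda$ together with $\C$ being algebraically closed is what forces the Hom space down to dimension $\le 1$.
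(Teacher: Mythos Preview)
Your proposal is correct and takes essentially the same approach as the paper: both arguments translate a non-zero associative bilinear form into a $W$-module homomorphism $\Psi \colon \lambda \to \lambda^*$, then apply Schur's lemma to the composite $\Psi^{-1}\Psi' \in \End_W(\lambda) = \C$ to conclude that any two such forms are proportional. Your write-up is slightly more explicit in setting up the dictionary and in noting that $\lambda^*$ is simple, but the substance is identical.
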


\begin{proof}
Let $\langle - , - \rangle$ and $\langle - , - \rangle'$ be  non-zero associative bilinear forms on $\lambda$ and $\Psi, \Psi'$ the associated morphisms $\lambda \to \lambda^*$. Since $\lambda$ is irreducible, the forms are non-degenerate and $\Psi, \Psi'$ are isomorphisms. By Schur's Lemma, $\Psi^{-1} \circ \Psi'$ is multiplication by a non-zero scalar $c$. Replacing $\langle - , - \rangle$ by $c \langle - , - \rangle$, we may assume $c = 1$. That is, $\Psi = \Psi'$ and hence $\langle - , - \rangle = \langle - , - \rangle'$.
\end{proof}

\begin{rem}\label{rem:cellHeckeGeck}
In this remark we assume that $W$ is a complex reflection group such that the associated generic Hecke algebra carries an anti-involution, as specified in \cite[Assumption~4.2]{GordonGriffChlo}. In this case, it is shown in the proof of \cite[Theorem~4.4]{GordonGriffChlo} that $\Hom_{\mc{O}}(P,\nabla(\lambda))^* \simeq \Hom_{\mc{O}}(P,\Delta(\lambda)) =: S_{\mathbf{q}}(\lambda)$. Therefore, \eqref{eq:standardcostandardiso} implies that the costandard module $\nabla_P(\lambda)$ is isomorphic to the ``standard'' $\mathcal{H}_{\mathbf{q}}(W)$-module $S_{\mathbf{q}}(\lambda)$ of \cite{GordonGriffChlo}.
It follows from \textit{loc. cit.} that $\nabla_P(\lambda)$ carries a symmetric bilinear form which is non-zero if and only if $L_{P}(\lambda) \neq 0$. The results of Du--Rui, as described \Autoref{rem_on_cell_modules}, can be viewed as a way of generalizing \cite[Theorem~4.4]{GordonGriffChlo} to the case where the generic Hecke algebra does not satisfy \cite[Assumption~4.2]{GordonGriffChlo}.
If $W$ is a Coxeter group and we assume Lusztig's conjectures P1--P15 hold, then it follows from \cite[Proposition 4.6]{GordonGriffChlo} that the costandard modules $\nabla_P(\lambda)$ are isomorphic to Geck's cell modules $W_{\mathbf{q}}(\lambda)$.
\end{rem}

\end{document}